\newsavebox{\ssa}
\DeclareMathOperator{\disc}{disc}
\DeclareMathOperator{\Rep}{Rep}
\newcommand{\floor}[1]{\left\lfloor#1\right\rfloor}
\newcommand{\abs}[1]{\left|#1\right|}
\newcommand{\bigbinom}[2]{\left(\genfrac{}{}{0pt}{}{#1}{#2}\right)}
\newcounter{savesection}
\newcounter{apdxsection}
\renewcommand\appendix{\par
  \setcounter{savesection}{\value{section}}%
  \setcounter{section}{\value{apdxsection}}%
  \setcounter{subsection}{0}%
  \gdef\thesection{\@Alph\c@section}}
\newcommand\unappendix{\par
  \setcounter{apdxsection}{\value{section}}%
  \setcounter{section}{\value{savesection}}%
  \setcounter{subsection}{0}%
  \gdef\thesection{\@arabic\c@section}}
\newtheorem{theorem}{Theorem}[section]
\newtheorem{corollary}[theorem]{Corollary}
\newtheorem{lemma}[theorem]{Lemma}
\newtheorem{prop}[theorem]{Proposition}
\theoremstyle{definition}
\newtheorem{definition}[theorem]{Definition}
\newtheorem{remark}[theorem]{Remark}
	\crefname{claim}{Claim}{Claims}
	\Crefname{claim}{Claim}{Claims}
	\crefname{app-corollary}{Corollary}{Corollaries}
	\Crefname{app-corollary}{Corollary}{Corollaries}
	\crefname{app-definition}{Definition}{Definitions}
	\Crefname{app-definition}{Definition}{Definitions}
	\crefname{figure}{Figure}{Figures}
	\Crefname{figure}{Figure}{Figures}
	\crefname{lemma}{Lemma}{Lemmata}
	\Crefname{lemma}{Lemma}{Lemmata}
	\crefname{app-lemma}{Lemma}{Lemmata}
	\Crefname{app-lemma}{Lemma}{Lemmata}
	\crefname{app-proposition}{Proposition}{Proposition}
	\Crefname{app-proposition}{Proposition}{Proposition}
	\crefname{app-theorem}{Theorem}{Theorems}
	\Crefname{app-theorem}{Theorem}{Theorems}
\def \C {\mathbb{C}}
\def \F {\mathbb{F}}
\def \N {\mathbb{N}}
\def \Q {\mathbb{Q}}
\def \Z {\mathbb{Z}}
\def \KK {\mathcal{K}}
\def \OO {\mathcal{O}}
\def \PP {\mathcal{P}}
\def \QQ {\mathcal{Q}}
\def \Ind {\textrm{Ind}}
\def \Dim {\textrm{Dim}}
\def \a {\alpha}
\def \b {\beta}
\def \g {\gamma}
\def \d {\delta}
\def \k {\kappa}
\def \m {\mu}
\def \pp {\mathfrak{p}}
\def \qq {\mathfrak{q}}
\title{Sums of binomial coefficients evaluated at $\a \in \overline{\Q},$ and applications}
\author{
  Daniil Kalinov, Andrei Mandelshtam}
\begin{document}
\maketitle

\begin{abstract}
The additive monoid $R_+(x)$ is defined as the set of all nonnegative integer linear combinations of binomial coefficients $\binom{x}{n}$ for $n \in \mathbb Z_+$. 
This paper is concerned with the inquiry into the structure of $R_+(\a)$ 
for complex numbers $\a.$ 
Particularly interesting is the case of algebraic $\a$ which are not non-negative integers. This question is motivated by the study of functors between Deligne categories $\Rep(S_t)$ (and also $\Rep(\textrm{GL}_t)$) for $t \in \mathbb C\backslash \mathbb Z_+$. We prove that this object is a ring if and only if $\a$ is an algebraic number that is not a nonnegative integer. Furthermore, we show that all algebraic integers generated by $\a,$ i.e. all elements of $\OO_{\Q(\a)},$ are also contained in this ring. We also give two explicit representations of $R_+(\a)$ for both algebraic integers and general algebraic numbers $\a.$ 
One is in terms of inequalities for the valuations with respect to certain prime ideals and the other is in terms of explicitly constructed generators. 
We show how these results work in the context of the study of symmetric monoidal functors between Deligne categories in positive characteristic.
Moreover, this leads to a particularly simple description of $R_+(\a)$ for both quadratic algebraic numbers and roots of unity.
\end{abstract}

\tableofcontents

\section{Introduction}

In the ring $\Q[x]$ of polynomials with coefficients in the rational numbers, it is interesting to consider the subring of all \textit{integer-valued} polynomials, i.e. polynomials $p(x)$ such that $p(n)$ is an integer for every integer $n.$ This ring is known as the most natural and simple example of a non-Noetherian ring. One may wonder whether this is not just the set of all polynomials with integer coefficients. However, e.g. the polynomial $\frac{x^2+x}{2}$ is integer-valued. It turns out that these are exactly the polynomials with integer coefficients in the basis of binomial coefficients $\binom{x}{n}\in\Q[x].$ In this paper, we examine the set $R_+(x)$ of polynomials which have nonnegative integer coefficients in this basis. More precisely, we study the set of values of these polynomials at a fixed number $\a.$ More precisely, the set $R_+(\a),\a\in\C,$ is defined as a monoid\footnote{Recall that a monoid is a group that doesn't need to have inverses. In the example at hand the group operation is addition.}  spanned by the non-negative integer linear combinations of binomial coefficients $\binom{\a}{k}$ for $k \in \Z_{\geq0}.$ It turns out that this monoid is closed under multiplication and, hence, gives us a semiring\footnote{Recall that a semiring is a ring that doesn't need to have additive inverses.}.


The ring of integer-valued polynomials often appears in representation theory, especially when working with representation theory in positive characteristic. It also naturally shows up in the domain of  Deligne categories. One of the ways in which it makes it appearance was discussed in the paper  \cite{classify-symmetric-monoidal-functors} and this paper also provides the main reason for the study of $R_+(\a)$.

Deligne categories were first introduced by Deligne in \cite{deligne2007categorie}. They serve as an interpolation of regular categories of finite-dimensional representations of symmetric groups $S_n$, which we denote by \textbf{Rep}$(S_n)$ (note the bold font), to complex values of $n.$ In this sense the group $S_t$ for $t\in\C\backslash\Z_{\geq0}$ does not exist, but the tensor category of representations \text{Rep}$(S_t)$ can still be defined. One of the reasons why this category is an interpolation of categories \textbf{Rep}$(S_n)$ is that for $n\in\Z_{\geq0}$ the category \textbf{Rep}$(S_n)$ turns out to be a quotient of \text{Rep}$(S_n)$ by the ideal of nilpotent morphisms. It is known that this category also admits no fiber functor, i.e. we cannot think of this object as having an underlying vector space structure. Nevertheless one can comfortably work with this category. Deligne also defined similar categories in the case of general linear, symplectic and special orthogonal groups.

With the papers of Etingof \cite{etingof2014representation,etingof2016representation} the new frontier of {\it representation theory in complex rank} was opened. There, he proposed a series of problems concerned with generalizing statements from the usual finite rank representation theory to the Deligne category setting. The paper \cite{classify-symmetric-monoidal-functors} was concerned with one such problem. As a byproduct of its solution, the authors of that paper were able to prove the following result:
\begin{theorem}[Harman and Kalinov \cite{classify-symmetric-monoidal-functors}] \label{symm-mon-funct-prop}
Symmetric monoidal functors between the Deligne categories \text{Rep}$(S_t) \to$\text{Rep}$(S_{t'})$ with $t,t'\notin \Z_{\ge 0}$ exist if and only if $t \in R_+(t')$. 
\end{theorem}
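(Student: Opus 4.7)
The plan is to invoke Deligne's universal characterization of $\Rep(S_t)$: this category is the universal symmetric tensor category equipped with a distinguished object $h$ of categorical dimension $t$ carrying the structure of a ``special commutative Frobenius algebra'', an abstraction of the algebra of functions on a $t$-element set. Under this universal property, symmetric monoidal functors $F : \Rep(S_t) \to \Rep(S_{t'})$ correspond bijectively to objects $X = F(h)$ in $\Rep(S_{t'})$ carrying such a ``set-of-size-$t$'' structure. The theorem therefore reduces to classifying these algebra objects in $\Rep(S_{t'})$ and computing their dimensions.

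For the easy direction, assume $t = \sum_k c_k \binom{t'}{k}$ with $c_k \in \Z_{\geq 0}$. Inside $\Rep(S_{t'})$, the partition-algebra idempotent projecting onto ``$k$-element subsets'' of the universal $t'$-set cuts out an object $E_k$ of categorical dimension $\binom{t'}{k}$, naturally endowed with a ``set of size $\binom{t'}{k}$'' structure. Taking the ``disjoint union'' (direct sum on underlying objects, with Frobenius structure assembled blockwise) of $c_k$ copies of $E_k$ for each $k$ produces a set-of-size-$t$ object in $\Rep(S_{t'})$, and hence the desired functor.

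For the harder direction, given a functor $F$ one obtains a set-of-size-$t$ object $X = F(h) \in \Rep(S_{t'})$. The main step is to show that any such $X$ must split as $\bigoplus_k E_k^{\oplus c_k}$ with $c_k \in \Z_{\geq 0}$; once this is established, $t = \dim X = \sum_k c_k \binom{t'}{k}$ lies in $R_+(t')$ tautologically. Since $t' \notin \Z_{\geq 0}$, the category $\Rep(S_{t'})$ is semisimple with explicitly understood indecomposables indexed by Young diagrams, so I would first decompose $X$ abstractly in this basis and then use the algebra relations pulled back from $h$ (commutativity, associativity, the Frobenius condition, and the ``special'' idempotent condition $\mu \circ \delta = \mathrm{id}$) to eliminate all indecomposable summands that are not of the form $E_k$, and to force the remaining multiplicities to arrange themselves as a disjoint union.

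The main obstacle is precisely this rigidity step: proving that a special commutative Frobenius structure on an object of $\Rep(S_{t'})$ cannot mix simples coming from different ``subset sizes'' and must globally look like a disjoint union of the $E_k$. I expect this to be controlled by analyzing how the idempotent relation interacts with the decomposition of $X \otimes X$ into indecomposables---the special/separable condition should be restrictive enough to pin down the fiber of $\mu$ over each point and force a block-diagonal structure. The positivity of the $c_k$, which is the reason we land in the monoid $R_+(t')$ rather than the full ring $R(t')$, will then come for free from the fact that $X$ is a genuine object of a Karoubian category rather than a virtual one, so each multiplicity in its decomposition is an honest nonnegative integer.
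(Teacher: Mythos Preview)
The paper does not contain its own proof of this statement: it is cited from Harman--Kalinov \cite{classify-symmetric-monoidal-functors} as motivation, and no argument is given in the present text. So there is no ``paper's proof'' to compare against line by line. That said, Remark~\ref{remark-p-char-deligne} does record the key structural fact from \cite{classify-symmetric-monoidal-functors}: a symmetric monoidal functor $\Rep(S_t)\to\Rep(S_{t'})$ corresponds to a commutative Frobenius algebra object $V$ in $\Rep(S_{t'})$ of dimension $t$, and $V$ decomposes as a direct sum of \emph{simple} commutative Frobenius algebras, each of which interpolates $\Ind_{S_{N-k}\times H}^{S_N}(\mathbb{C})$ for some $k$ and some subgroup $H\subset S_k$.

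Your overall strategy---use the universal property to convert the question to one about Frobenius algebras, then classify those---matches this. But there is a concrete gap in your hard direction. You assert that any such $X$ must split as $\bigoplus_k E_k^{\oplus c_k}$, i.e.\ that the only simple commutative Frobenius algebras are the ``$k$-subset'' objects $E_k$. This is false: the simple Frobenius algebras are parametrized by pairs $(k,H)$ with $H$ a subgroup of $S_k$, not just by $k$. The interpolated object for $(k,H)$ has dimension $\tfrac{k!}{|H|}\binom{t'}{k}$, which is a positive integer multiple of $\binom{t'}{k}$ since $|H|\mid k!$; your $E_k$ corresponds to the extreme case $H=S_k$. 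So the conclusion $t\in R_+(t')$ still follows once you have the correct classification, but the mechanism you sketch---``use the algebra relations to eliminate all indecomposable summands that are not of the form $E_k$''---would not succeed as stated, because those other summands genuinely occur. The real work in \cite{classify-symmetric-monoidal-functors} is precisely this classification of simple commutative Frobenius algebras in $\Rep(S_{t'})$, and your proposal does not engage with it beyond hoping that the special/separable condition will force block-diagonality.
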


Also note that a similar result holds for Deligne categories $\Rep(GL_t)$:
\begin{theorem} \label{symmetric-monoidal-functor-thm}
Symmetric monoidal functors between the Deligne categories $\Rep(GL_t) \to \Rep(GL_{t'})$ with algebraic $t,t' \notin \Z_{\geq0}$ exist if and only if $t \in R_+(t')$.
\end{theorem}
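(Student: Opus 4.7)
My plan is to mirror the proof of Theorem~\ref{symm-mon-funct-prop} for $\Rep(S_t)$ from~\cite{classify-symmetric-monoidal-functors}, but with exterior powers of the tautological object of $\Rep(GL_{t'})$ playing the role of the Young-symmetrizer subobjects of the permutation module, and invoking this paper's main ring-structure theorem for $R_+(\a)$ at the key step. I would split the argument into the two directions, both relying on Deligne's universal property: $\Rep(GL_t)$ is the free Karoubian rigid symmetric monoidal $\C$-linear category on one dualizable object of categorical dimension $t$.

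For the ``if'' direction, suppose $t = \sum_{k\ge 0} n_k \binom{t'}{k}$ with $n_k \in \Z_{\ge 0}$ and only finitely many nonzero. Writing $V'$ for the tautological object of $\Rep(GL_{t'})$, the exterior power $\Lambda^k V'$, obtained as the image of the antisymmetrizer on $(V')^{\otimes k}$, is a dualizable object of categorical dimension $\binom{t'}{k}$. Then $X := \bigoplus_k (\Lambda^k V')^{\oplus n_k}$ is dualizable of dimension $t$ in $\Rep(GL_{t'})$, and the universal property produces a symmetric monoidal functor $\Rep(GL_t)\to\Rep(GL_{t'})$ sending the tautological object $V$ to $X$.

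For the ``only if'' direction, let $F\colon\Rep(GL_t)\to\Rep(GL_{t'})$ be symmetric monoidal and set $X = F(V)$, so that $\dim X = t$. Since $\Rep(GL_{t'})$ is generated as a Karoubian rigid symmetric monoidal category by $V'$ and $(V')^*$, the object $X$ arises as a retract of a finite sum of mixed tensor powers $(V')^{\otimes a}\otimes (V'^*)^{\otimes b}$, and the categorical dimensions appearing in $\Rep(GL_{t'})$ are integer-valued polynomials in $t'$ evaluated at $t'$ (they specialize at each nonneg integer $t'=n$ to honest $GL_n$-representation dimensions). This places $t = \dim X$ in the ring $\Z[\binom{t'}{k}]_{k\ge 0}$ of integer-valued polynomials at $t'$. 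The principal obstacle is then promoting $t\in\Z[\binom{t'}{k}]_{k\ge 0}$ to $t\in R_+(t')$. Here I would invoke the main theorem of this paper: when $t'$ is algebraic and not a nonneg integer, $R_+(t')$ is already a ring, and since it contains every $\binom{t'}{k}$ it coincides with $\Z[\binom{t'}{k}]_{k\ge 0}$. The essential use of the algebraicity of $t'$ at this step is the crux of the argument and the reason the theorem's hypothesis is stated as it is.
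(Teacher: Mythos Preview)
Your proposal is correct and takes essentially the same approach as the paper: both directions rest on the universal property of $\Rep(GL_t)$, the ``if'' direction is witnessed by direct sums of exterior powers $\Lambda^k V'$ of dimension $\binom{t'}{k}$, and the ``only if'' direction observes that all object dimensions in $\Rep(GL_{t'})$ lie in $R(t')$ and then invokes the ring-structure theorem (Corollary~\ref{its-a-ring}) to get $R(t')=R_+(t')$ for algebraic $t'\notin\Z_{\ge 0}$. The paper phrases the latter via dimensions of simple objects rather than your retract-of-mixed-tensor-powers language, but the content is identical.
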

\begin{proof}
From the universal property of $\Rep(GL_t)$ we know that such a functor exists iff there is an object of dimension $t$ in $\Rep(GL_{t'})$. Thus we only need to understand the structure of the set of possible dimensions of objects in $\Rep(GL_{t'})$.

Now note that all simple objects of $\Rep(GL_{t'})$ have dimension given by the value of an integer-valued polynomial on $t'.$ In other words, the dimensions of all simple objects, and hence of all objects, are in $R(t')$ (where $R(s)$ is the ring spanned by all integer combinations of binomial coefficients in $s$). By Section \ref{Ring-Section}, for an algebraic $t'$ we have the equality $R_+(t')= R(t').$ It follows that the dimensions of all objects of $\Rep(GL_{t'})$ lie in $R_+(t')$. 

But now also note that the interpolations of the exterior powers of the fundamental representation have dimensions given by the binomial coefficients of $t'$ themselves. Hence, for any $q \in R_+(t')$ there is an object in $\Rep(GL_{t'})$ of dimension $q.$ Therefore, the set of dimensions of objects of $\Rep(GL_{t'})$ is exactly $R_+(t')$ and the result follows.
\end{proof}
Thus, to understand whether a symmetric monoidal functor exists between such Deligne categories, it is helpful to determine the explicit form of $R_+(\a)$. Since, in the finite rank, symmetric monoidal functors between categories of representations of symmetric groups are in one-to-one correspondence with the homomorphisms between symmetric groups themselves, it follows that by studying the structure of $R_+(\a)$ we are essentially studying the interpolations of homomorphisms between symmetric groups to non-integer rank.

Previously, $R_+(\a)$ was described explicitly for $\a\in\mathbb{Q}$ in \cite{classify-symmetric-monoidal-functors}, where this object was introduced. After that, Hu \cite{sqrt-2-case} described explicitly the structure of $R_+(\sqrt{2})$ and made advances on the structure of $R_+(\sqrt{n}).$ In the current paper, we provide the full solution of this problem. The question is interesting only for algebraic numbers $\a,$ since $R_+(t)$ are isomorphic to each other for all transcendental $t,$ and this set is not a ring. One of our main results is the following.
\begin{theorem}\label{intro-its-a-ring}
The semiring $R_+(\a)$ is a ring if and only if $\a$ is an algebraic number that is not a nonnegative integer.
\end{theorem}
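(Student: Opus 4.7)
The proof splits into two directions. The easier one is that if $\alpha$ is a nonnegative integer or transcendental, then $R_+(\alpha)$ is not a ring; the harder one is that for every other algebraic $\alpha$ the set $R_+(\alpha)$ has additive inverses.

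For the easy direction, if $\alpha = n\in \Z_{\ge 0}$, then $\binom{n}{k}\in\Z_{\ge 0}$ for all $k$, so $R_+(n)\subseteq \Z_{\ge 0}$ and is not a ring. If $\alpha$ is transcendental, the binomial polynomials $\binom{x}{k}$ are $\Q$-linearly independent, so the evaluation map at $\alpha$ restricted to the subring they span is injective; hence $R_+(\alpha)$ is isomorphic as a monoid to $R_+(x)$ viewed as polynomials. Any nonzero $p\in R_+(x)$ is a polynomial with nonnegative binomial coefficients, and in particular $p(n)\ge 0$ for every $n\in\Z_{\ge 0}$ because $\binom{n}{k}\ge 0$; therefore the constant $-1$ cannot lie in $R_+(x)$, and $-1\notin R_+(\alpha)$.

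For the hard direction, fix algebraic $\alpha\notin\Z_{\ge 0}$. Since $R_+(\alpha)$ is already a commutative semiring containing $0$ and $1$, and is closed under multiplication as the introduction notes, it will be a ring as soon as $-1\in R_+(\alpha)$: indeed then $-r=(-1)\cdot r\in R_+(\alpha)$ for every $r$. Moreover, it suffices to find \emph{some} positive integer $N$ with $-N\in R_+(\alpha)$, because $-1=-N+(N-1)$ and $N-1\in\Z_{\ge 0}\subseteq R_+(\alpha)$. So the entire theorem reduces to producing one negative integer in $R_+(\alpha)$.

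To produce such a negative integer, the plan is to exploit an algebraic dependence: choose a nonzero $m(x)\in\Z[x]$ with $m(\alpha)=0$, written in the binomial basis as $m(x)=\sum_k b_k\binom{x}{k}$ with $b_k\in\Z$, so that $\sum_k b_k\binom{\alpha}{k}=0$. Splitting by signs gives two elements of $R_+(\alpha)$ whose values at $\alpha$ coincide; this identification is the engine that lets one subtract within $R_+(\alpha)$. Concretely, I would first handle the rational case $\alpha\in\Q\setminus\Z_{\ge 0}$, which is already in Harman--Kalinov, and then bootstrap to higher degree by combining the relation $m(\alpha)=0$ with Galois-symmetric binomial expressions: for large $k$, the symmetric combinations $\sum_{\sigma} \binom{\sigma(\alpha)}{k}$ are rational integer-valued polynomials in $\alpha$, and the rational case together with the multiplicative closure should let one realize any element of $\OO_{\Q(\alpha)}$ (in particular $-1$) as a nonnegative combination of the $\binom{\alpha}{k}$.

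The main obstacle is precisely this constructive step: the relation $m(\alpha)=0$ only equates two positive combinations, and one must actively manipulate with Pascal-type identities like $x\binom{x}{k}=(k+1)\binom{x}{k+1}+k\binom{x}{k}$ and the product identity $\binom{x}{k}\binom{x-k}{k}=\binom{2k}{k}\binom{x}{2k}$ to transform the relation into an expression realizing some $-N$. I expect the cleanest route is the one signposted by the paper: establish the valuation-based description of $R_+(\alpha)$ (the forward reference to the section on rings and the mention of explicit generators suggests that the inclusion $\OO_{\Q(\alpha)}\subseteq R_+(\alpha)$ is proved there), from which $-1\in R_+(\alpha)$ is immediate. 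Without that machinery, the argument remains ad hoc and is the technical heart of the paper.
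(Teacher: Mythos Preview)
Your easy direction is correct and complete.

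For the hard direction, however, you have not given a proof: you outline a plan (split the relation $m(\alpha)=0$ by signs, invoke Galois-symmetric sums, bootstrap from the rational case) but then concede that ``the argument remains ad hoc and is the technical heart of the paper.'' The Galois-symmetric sums $\sum_\sigma\binom{\sigma(\alpha)}{k}$ are not obviously elements of $R_+(\alpha)$, since the conjugates $\sigma(\alpha)$ need not lie in $R_+(\alpha)$ a priori, so that line does not get off the ground without substantial further work.

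More seriously, your suggested ``cleanest route'' via the inclusion $\OO_{\Q(\alpha)}\subseteq R_+(\alpha)$ is circular in the paper's logic. That inclusion is Proposition~\ref{ring-of-integers}, and its proof (through Lemma~\ref{mult-of-alg-int}) explicitly invokes Corollary~\ref{its-a-ring}, i.e.\ the very fact that $R_+(\alpha)$ is a ring. So one cannot use the valuation description or the ring-of-integers containment to establish $-1\in R_+(\alpha)$; rather, $-1\in R_+(\alpha)$ is an input to those later results.

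The paper's actual argument is quite different from anything you sketch. It is an analytic/asymptotic statement about polynomials (Theorem~\ref{multiply-to-get-pos-coeffs}): given any $p(x)\in\Q[x]$ with $p(n)\neq 0$ for all $n\in\Z_{\ge 0}$, one can find $s(x)$ so that the product $p(x)s(x)$ has \emph{all} binomial coefficients nonnegative and a strictly positive constant term. The proof controls the growth of the numbers $p_{nk}=\Delta^n(x^kp(x))(0)$ via the explicit formula $p_{nk}=\sum_i(-1)^i\binom{n}{i}(n-i)^kp(n-i)$ and a careful choice of $s$ as a linear combination of powers $x^k$. Applying this with $p=m_\alpha$ and evaluating at $\alpha$ yields $0=a_0+\sum_{i\ge 1}a_i\binom{\alpha}{i}$ with $a_0>0$ and all $a_i\ge 0$, whence $-a_0\in R_+(\alpha)$ and so $-1\in R_+(\alpha)$. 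This is the missing idea in your proposal.
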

This is proved as Corollary \ref{its-a-ring} of Theorem \ref{multiply-to-get-pos-coeffs}. Moreover, it turns out that this ring has an interesting structure that can be exactly characterized for any algebraic number $\a.$ 
This is done in our second main result. Namely, for each $\a$ and each prime integer $p$ we define a set $\PP_p'$ that consists of prime ideals $\pp$ of $\OO_{\Q(\a)}$ over $(p)$ such that the ramification index of $\pp$ is $1,$ $|\OO_{\Q(\a)}/\pp| = p,$ and $v_\pp(\a)\geq0$ (see Definition \ref{P_p-updated}). Then: 
\begin{theorem}\label{exact-description-for-all-alg-nums-intro}
For any algebraic number $\a,$ $R_+(\a)$ is the set of all elements $\b\in\Q(\a)$ such that $v_\pp(\b)\geq0$ for all $\pp\in\PP_p'$ for all primes $p.$
\end{theorem}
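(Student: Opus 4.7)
The plan is to prove the two inclusions separately; by Theorem~\ref{intro-its-a-ring} we may assume $\a$ is not a nonnegative integer, so $R_+(\a)$ is a ring. Let $T$ denote the right-hand side of the theorem. For $R_+(\a) \subseteq T$, observe that $T$ is itself a ring, being an intersection of valuation rings inside $\Q(\a)$, so it suffices to check $v_\pp(\binom{\a}{n}) \geq 0$ for every generator $\binom{\a}{n}$ and every $\pp \in \PP_p'$. Fix such a $\pp$: since its ramification index and residue degree both equal $1$, the $\pp$-adic completion of $\Q(\a)$ is canonically $\Q_p$, and $v_\pp(\a) \geq 0$ places $\a$ inside $\Z_p$ under this identification. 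The integer-valued polynomial $\binom{x}{n}$ extends by continuity (using density of $\Z$ in $\Z_p$) to a map $\Z_p \to \Z_p$, so $\binom{\a}{n} \in \Z_p$, i.e.\ $v_\pp(\binom{\a}{n}) \geq 0$.

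For $T \subseteq R_+(\a)$, the plan is to exploit that both sides are subrings of $\Q(\a)$ containing $\OO_{\Q(\a)}$ --- the inclusion $\OO_{\Q(\a)} \subseteq R_+(\a)$ being available from earlier in the paper, after a reduction to the algebraic integer case via the explicit generator description. Since $\OO_{\Q(\a)}$ is a Dedekind domain, every ring between $\OO_{\Q(\a)}$ and $\Q(\a)$ is the localization at a set of primes, and this set is determined by which primes admit elements of strictly negative valuation in that ring. By the forward inclusion, no $\pp \in \PP_p'$ is such a prime for $R_+(\a)$; therefore it suffices to exhibit, for every prime $\qq$ of $\OO_{\Q(\a)}$ lying outside $\bigcup_p \PP_p'$, an element of $R_+(\a)$ with $v_\qq < 0$.

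The main obstacle is this existence step, since distinct primes above a common rational prime $p$ cannot be distinguished by the $p$-adic valuation of a rational scalar --- the separators must exploit the algebraic structure of $\a$ itself. The plan is an explicit computation of $v_\qq(\binom{\a}{n})$. When $v_\qq(\a)<0$, one simply uses $\a = \binom{\a}{1}$. When $v_\qq(\a) \geq 0$ but the residue degree $f>1$, the residue $\bar\a \in \F_{p^f}$ lies outside $\F_p$, so $\a - k$ is a unit modulo $\qq$ for every integer $k$; hence the numerator $\a(\a-1)\cdots(\a-n+1)$ has $v_\qq = 0$ while $n!$ contributes $-e_\qq \, v_p(n!)$, making $v_\qq(\binom{\a}{p}) = -e_\qq < 0$. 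The ramified case $e_\qq>1$ (with $f_\qq=1$) requires a more delicate Kummer-style carry count, comparing $v_\qq(\a - c)$ to $e_\qq$ along the arithmetic progression $c, c+p, c+2p, \ldots$, but still produces $v_\qq(\binom{\a}{n}) < 0$ for $n$ large enough. Combined with the forward inclusion, this identifies the localization set of $R_+(\a)$ with the complement of $\bigcup_p \PP_p'$ and yields the theorem.
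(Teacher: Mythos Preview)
Your overall architecture matches the paper's: show $R_+(\a)$ is an overring of $\OO_{\Q(\a)}$, then identify it via the classification of overrings of a Dedekind domain (the paper's Corollary~\ref{ring-in-terms-of-prime-valuation}). The forward inclusion, via continuity of integer-valued polynomials on $\Z_p$, is clean and is essentially the content of Lemmata~\ref{linear-is-not-removed} and~\ref{ideal-inverse-in-iff-valuation-negative}.

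There is, however, a genuine error in Case~2 of your reverse direction. The claim that $f_\qq>1$ and $v_\qq(\a)\ge 0$ force $\bar\a\notin\F_p$ is false in general. Take $\a=p\sqrt{d}$ with $d$ squarefree and a non-residue modulo an odd prime $p$. Then $(p)$ is inert in $\OO_{\Q(\sqrt d)}$, so the unique prime $\qq$ over $p$ has $e_\qq=1$, $f_\qq=2$; yet $\bar\a=0\in\F_p$. Here $\a-k$ is a $\qq$-unit for $1\le k\le p-1$ but \emph{not} for $k=0$, and one computes $v_\qq\bigl(\binom{\a}{p}\bigr)=1-1=0$, not $-e_\qq$. (One must go to $n=p^2$ before a negative valuation appears.) The underlying issue is that $\bar\a$ generates the residue field only when $p$ does not divide the index $[\OO_{\Q(\a)}:\Z[\a]]$, which is nowhere assumed.

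The paper sidesteps this by refusing to split on $e$ versus $f$. Lemma~\ref{nonlinear-is-removed} treats every prime with $e_\qq f_\qq>1$ uniformly: in the completion $K_\qq\simeq\Q_p(y)$ the image $y$ of $\a$ lies outside $\Q_p$, so by a compactness argument (Lemma~\ref{min-dist-Q_p}) the quantity $m=\max_{q\in\Q_p}v_p(y-q)$ is finite and attained, and a direct count then gives $v_\qq\bigl(\binom{\a}{p^{\lceil m\rceil+1}}\bigr)<0$. This single argument subsumes both your Case~2 and your hand-waved Case~3, and is exactly the ``delicate carry count'' you allude to --- but it must be carried out with the $p$-adic valuation on the completion, not via residue-field degrees.
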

We prove this result as Theorem \ref{exact-description-for-all-alg-nums-valuation} in the main text. Later, in Section \ref{p-adic-functor-section}, we slightly reformulate this theorem and show why it is in fact a natural result by showing how it is tied to a certain statement about the connection between symmetric monoidal functors between Deligne categories over $\C$ and Deligne categories in positive characteristic (see Remark \ref{remark-p-char-deligne}).

This description is in terms of ideals of the ring of integers of the field generated by $\a.$ However, sometimes it is also valuable to have a more explicit characterization of the ring in terms of the set of its generators. Such a description is given in our third main result. For each $\a,$ we define certain explicit elements $\a_p\in\Q(\a)$ for primes $p$ (see Definition \ref{a_p}). Then:
\begin{theorem}\label{final-description-all-nums-intro}
For $\a$ an algebraic number with integral part $\b,$ the ring $$
R_+(\a)=\OO_{\Q(\a)}\left[\a,\b_2,\b_3,\b_5,\dots\right].
$$
In particular, if $\a$ is an algebraic integer, then $$
R_+(\a)=\OO_{\Q(\a)}\left[\a_2,\a_3,\a_5,\dots\right].
$$
\end{theorem}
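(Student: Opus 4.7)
My plan is to deduce Theorem \ref{final-description-all-nums-intro} from Theorem \ref{exact-description-for-all-alg-nums-intro}, which characterizes $R_+(\a)$ via valuations. Observe that the valuation description identifies $R_+(\a)$ with the subring of $\Q(\a)$ obtained from $\OO_{\Q(\a)}$ by inverting the complement of $\bigcup_p \PP_p'$, i.e., the set of ``bad'' primes $\pp$ over some $p$ which are either ramified, have residue degree larger than one, or satisfy $v_\pp(\a) < 0$. The theorem to prove then asserts that this localization is generated as an $\OO_{\Q(\a)}$-algebra by a single carefully chosen element $\a_p$ for each rational prime $p$ (together with $\a$ itself, in the non-integer case).

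I would begin with the algebraic integer case. For the inclusion $\OO_{\Q(\a)}[\a_2, \a_3, \ldots] \subseteq R_+(\a)$, since $\OO_{\Q(\a)} \subseteq R_+(\a)$ and $R_+(\a)$ is a ring (by Theorem \ref{intro-its-a-ring}), it suffices to verify $\a_p \in R_+(\a)$ for each $p$. Using Theorem \ref{exact-description-for-all-alg-nums-intro}, this reduces to checking $v_\qq(\a_p) \geq 0$ for all $\qq \in \PP_q'$ at every rational prime $q$. By the construction in Definition \ref{a_p}, $\a_p$ is a $q$-integer for $q \neq p$, while for primes over $p$ the inequality holds on $\PP_p'$ by design. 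For the reverse inclusion, I would show that for each $p$, adjoining $\a_p$ to $\OO_{\Q(\a)}$ realizes exactly the localization of $\OO_{\Q(\a)}$ at the bad primes over $p$, and no others. Granted this, every $\gamma \in R_+(\a)$ has denominators supported on finitely many bad primes, which can be cleared one rational prime at a time using the $\a_p$'s; a Chinese-remainder-type combination then yields $\gamma \in \OO_{\Q(\a)}[\a_2, \a_3, \ldots]$. The general algebraic number case then reduces to the integer case: writing $\a = \b + (\a - \b)$ with integral part $\b$, one has $\Q(\a) = \Q(\b)$, and the bad primes for $\a$ are exactly those for $\b$ together with the primes where $v_\pp(\a) < 0$; since $\a$ itself already inverts precisely this second set (it has poles there), appending $\a$ to the generators $\b_2, \b_3, \ldots$ gives the correct ring.

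The principal obstacle is the identification $\OO_{\Q(\a)}[\a_p] = \OO_{\Q(\a)}[\text{bad primes above } p]^{-1}$. When several bad primes lie over a single $p$, the element $\a_p$ must invert all of them simultaneously while leaving every good $\pp \in \PP_p'$ untouched. Establishing this will hinge on a Kummer--Dedekind style analysis of the factorization of the minimal polynomial of $\a_p$ modulo $p$, exploiting crucially that good primes in $\PP_p'$ are unramified with residue degree one so that they remain intact in $\OO_{\Q(\a)}[\a_p]$, while at each bad prime $\a_p$ produces a genuine pole. The inclusion of good primes $\OO_{\Q(\a)} \subseteq \OO_{\Q(\a)}[\a_p]$ upstairs-downstairs is mostly formal; the subtle direction is that no bad prime survives, which will follow from computing valuations $v_\pp(\a_p)$ explicitly from Definition \ref{a_p}.
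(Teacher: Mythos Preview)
Your proposal is correct and mirrors the paper's approach: the valuation checks that $\a_p \in R_+(\a)$ and that $v_\pp(\a_p) < 0$ at bad primes are exactly Lemmas \ref{inclusion-of-a_p} and \ref{negative-valuation-of-nonlinear-ideals}, your localization identity $\OO_{\Q(\a)}[\a_p] = (\text{bad primes over }p)^{-1}\OO_{\Q(\a)}$ is the content of Lemma \ref{multiply-into-a_p} via Corollary \ref{ring-in-terms-of-prime-valuation}, and your Chinese-remainder combination is the Bezout step in Theorem \ref{final-description}. One minor slip: the paper's \emph{integral part} is defined multiplicatively as $\b = \d\a$ for the denominator $\d$, not additively, so your reduction in the general case should read $\a = \b/\d$ rather than $\a = \b + (\a - \b)$; the substance of that reduction is Proposition \ref{alg-num-to-alg-int}.
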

We restate and prove this in Theorem \ref{final-description-all-nums}.
Furthermore, we provide even simpler descriptions of $R_+(\a)$ for quadratic algebraic numbers and roots of unity $\a,$ see Section \ref{Examples-Section}, allowing us to easily answer the question of whether $\b\in R_+(\a)$ for an algebraic number $\b.$

Another important result is the following lemma, proved by Harman and Kalinov \cite{classify-symmetric-monoidal-functors} using methods of category theory:
\begin{lemma}[Harman and Kalinov \cite{classify-symmetric-monoidal-functors}]\label{binomial-coeffs-of-binomial-coeffs-1}
If $\b\in R_+(\a),$ then $R_+(\b)\subset R_+(\a).$
\end{lemma}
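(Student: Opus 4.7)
The plan is to reduce the claim, via the Vandermonde convolution and the fact that $R_+(\alpha)$ is already known to be a semiring (closed under both addition and multiplication, as stated earlier in the introduction), to a single combinatorial polynomial identity about iterated binomial coefficients. First, since $R_+(\beta)$ is the additive monoid generated by $\binom{\beta}{n}$ for $n \geq 0$ and $R_+(\alpha)$ is closed under addition, it suffices to show $\binom{\beta}{n} \in R_+(\alpha)$ for every $n$. Writing $\beta = \sum_{i=1}^{N} \binom{\alpha}{k_i}$ (with repetitions in the multiset $\{k_i\}$ to absorb multiplicities), the iterated Vandermonde identity
\[
\binom{x_1 + \cdots + x_N}{n} = \sum_{\substack{m_1 + \cdots + m_N = n \\ m_i \geq 0}} \prod_{i=1}^{N} \binom{x_i}{m_i}
\]
holds as a polynomial identity with nonnegative integer coefficients. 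Specializing $x_i = \binom{\alpha}{k_i}$ exhibits $\binom{\beta}{n}$ as a nonnegative integer combination of products of terms of the form $\binom{\binom{\alpha}{k}}{m}$, so using closure of $R_+(\alpha)$ under multiplication the problem reduces to showing $\binom{\binom{\alpha}{k}}{m} \in R_+(\alpha)$ for all $k, m \geq 0$.

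The core step would then be to establish the polynomial identity
\[
\binom{\binom{x}{k}}{m} = \sum_{j \geq 0} c_{k,m,j} \binom{x}{j}
\]
with all $c_{k,m,j} \in \mathbb{Z}_{\geq 0}$. To do this I would evaluate at positive integers $x = n$ and use a combinatorial interpretation: $\binom{\binom{n}{k}}{m}$ counts unordered $m$-element collections of $k$-subsets of $[n]$, and classifying such a collection by the cardinality $j$ of the union $U$ of its members gives
\[
\binom{\binom{n}{k}}{m} = \sum_{j} \binom{n}{j} \, h(k,m,j),
\]
where $h(k,m,j) \in \mathbb{Z}_{\geq 0}$ counts the $m$-element collections of $k$-subsets of $[j]$ whose union is all of $[j]$. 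Since the identity holds for infinitely many integer values of $x$, the two polynomials in $x$ coincide, forcing $c_{k,m,j} = h(k,m,j) \geq 0$. Evaluating at $x = \alpha$ then places $\binom{\binom{\alpha}{k}}{m}$ in $R_+(\alpha)$, completing the argument.

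The main delicate point is the combinatorial identity in the second step: the Vandermonde reduction and the monoid-generation argument are formal, but the nonnegative integrality of the coefficients $c_{k,m,j}$ rests essentially on the combinatorial interpretation, and one should verify carefully that the polynomial identity extracted from agreement at integer points genuinely specializes at the algebraic $\alpha$ without hidden denominators. Once this is in hand, the whole argument is purely elementary and bypasses the categorical machinery used in \cite{classify-symmetric-monoidal-functors}.
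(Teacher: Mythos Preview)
Your proof is correct and takes a genuinely different route from the paper's. Both arguments are purely combinatorial and avoid the categorical proof of Harman--Kalinov, but they organize the combinatorics differently.

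The paper works with a general $f\in R_+(x)$ all at once: it expands $\binom{f(x)}{k}$ in the binomial basis via iterated finite differences, so the $n$th coefficient is $\sum_{i}(-1)^i\binom{n}{i}\binom{f(n-i)}{k}$, and then proves this alternating sum is nonnegative by constructing sets $S_1,\dots,S_n$ with prescribed intersection cardinalities (governed by the coefficients $a_j$ of $f$) and applying inclusion--exclusion to the count of $k$-element subsets of $S_1\cup\cdots\cup S_n$ lying inside a single $S_i$. Your argument instead first uses Vandermonde to strip off the additive structure of $\beta$, reducing everything to the single building block $\binom{\binom{x}{k}}{m}$, and then expands that via the transparent ``choose the union first, then the covering family'' count, which produces the nonnegative integers $h(k,m,j)$ directly. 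Your approach needs the semiring property (Lemma~\ref{semiring}) to absorb the Vandermonde products, which the paper's one-shot argument does not invoke; in exchange, your core combinatorial step is shorter and avoids the finite-difference formalism and the somewhat intricate set construction. Your concern about ``hidden denominators'' is not an issue: both sides of your key identity are polynomials in $\Q[x]$ of degree at most $km$ that agree at all nonnegative integers, hence are equal as polynomials, so specialization at $\alpha$ is immediate.
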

Here, we prove this result without the use of category theory, using only combinatorial techniques, see Lemma \ref{binomial-coeffs-of-binomial-coeffs}.

While the results of Section \ref{Examples-Section} are proved using Theorem \ref{exact-description-for-all-alg-nums}, these proofs may be difficult to visualize and understand. The Appendix contains alternative, explicit proofs that demonstrate a different, constructive approach.

Section \ref{Background-Section} provides background information about the problem. In Section \ref{x-in-R+y-then-R+x-Section} we prove Lemma \ref{binomial-coeffs-of-binomial-coeffs-1} combinatorially. In Section \ref{Ring-Section} we prove Theorem \ref{intro-its-a-ring}. In Section \ref{General-Section} we prove several versions of Theorem \ref{exact-description-for-all-alg-nums-intro}. In Section \ref{Specifics-Section} we construct explicit generators for $R_+(\a)$ and prove Theorem \ref{final-description-all-nums-intro}. In Section \ref{Examples-Section} we describe and prove the structure of $R_+(\a)$ for several important classes of numbers $\a.$ Finally, alternative, constructive proofs of results from Section \ref{Examples-Section} are located in the Appendix.

\textbf{Acknowledgments:} The authors thank RSI for the opportunity to work on this problem. They thank Pavel Etingof for supervising this project and Alexander Petrov for useful discussion on the number-theoretic concepts involved in this paper. And they also thank Pavel Etingof, Tanya Khovanova and Alexander Petrov for proofreading the paper. 
\section{Preliminary information}\label{Background-Section}
\begin{definition}
For any (not necessarily algebraic) number $\a,$ the set $R_+(\a)$ is defined as the monoid generated by the binomial coefficients of $\a,$ i.e. $$
R_+(\a)=\Z_{\geq0}\left<1,\a,\binom{\a}{2},\dots,\binom{\a}{k},\dots\right>,
$$
where $\binom{\a}{k}=\frac{\a(\a-1)\cdots(\a-k+1)}{k!}.$ Explicitly, $t\in R_+(\a)$ if it can be written in the form $$
t=a_0+a_1\a+a_2\binom{\a}{2}+\dots+a_n\binom{\a}{n}
$$
for some $n,$ where the $a_i$ are nonnegative integers.
\end{definition}
We extend this definition to $\a$ an indeterminate. Specifically, we define $R_+(x)$ as the subset of $\Q[x]$ generated as an additive monoid by the binomial coefficients (defined above) of $x.$ Note that in this case $R_+(x)$ is a subset of the ring $R(x)$ of polynomials that are integer-valued on integer inputs.
\subsection{Integer Case}
For any nonnegative integer $n,$ we have $\binom{n}{k}=0$ for $k>n$ so, because $\binom{n}{k}$ is nonnegative for all $k\geq0,$ we have $$
R_+(n)=\Z_{\geq0}\left<1,n,\binom{n}{2},\dots,\binom{n}{n}\right>=\Z_{\geq0}.
$$
On the other hand, if $n<0,$ we have $-n-1+n=-1\in R_+(n),$ so the element $-1,$ along with $1,$ generates all of $\Z.$ Since for any nonnegative integer $k$, $\binom{n}{k}$ is an integer\footnote{Indeed, for $n<0$ we note that $\binom{n}{k}=(-1)^k\binom{-n+k-1}{k},$ which is an integer.}, it follows that $R_+(n)=\Z.$
\subsection{Semiring and Ring Structure}
Here follow some helpful lemmas concerning the general structure of $R_+(\a),$ proved by Harman and Kalinov \cite{classify-symmetric-monoidal-functors} (also see Hu \cite{sqrt-2-case}), but we provide slightly different proofs:
\begin{lemma}\label{semiring}
$R_+(\a)$ is a semiring, i.e. it is closed under addition and multiplication, and has an additive and multiplicative identity.
\end{lemma}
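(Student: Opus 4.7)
The plan is to split the claim into its two components: (i) the existence of additive and multiplicative identities together with closure under addition, which is essentially formal, and (ii) closure under multiplication, which is the real content.

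For (i), the element $0$ (the empty sum) lies in $R_+(\a)$ and is the additive identity, and $1 = \binom{\a}{0}$ is by definition one of the generators and is the multiplicative identity in $\C$. Closure under addition is immediate from the definition, since the sum of two nonnegative integer combinations of the $\binom{\a}{k}$ is again such a combination.

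For (ii), it suffices by distributivity to show that $\binom{\a}{m}\binom{\a}{n} \in R_+(\a)$ for all $m,n \in \Z_{\geq 0}$. The key is the polynomial identity
\[
\binom{x}{m}\binom{x}{n} \;=\; \sum_{k=0}^{\min(m,n)} \frac{(m+n-k)!}{k!\,(m-k)!\,(n-k)!}\,\binom{x}{m+n-k},
\]
in which every coefficient is the multinomial coefficient $\binom{m+n-k}{k,\,m-k,\,n-k}$, hence a nonnegative integer. I would prove this identity combinatorially for $x \in \Z_{\geq 0}$: both sides count pairs of subsets $(A,B)$ of an $x$-element set with $|A| = m$ and $|B| = n$, where the right-hand side stratifies these pairs by $k = |A \cap B|$ (first pick the union $A \cup B$ of size $m+n-k$, then partition it into the three blocks $A\setminus B$, $B \setminus A$, $A \cap B$). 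Since both sides are polynomials in $x$ of degree $\leq m+n$ agreeing on all nonnegative integers, they agree as elements of $\Q[x]$, and specializing $x = \a$ gives the desired expression of $\binom{\a}{m}\binom{\a}{n}$ as a nonnegative integer combination of $\binom{\a}{k}$.

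I do not foresee a real obstacle here: once the product identity above is written down, the whole statement falls out. The only mild subtlety is the polynomial-lifting argument, which is necessary because for arbitrary $\a \in \C$ one cannot argue combinatorially on an $\a$-element set; this is handled by noting that a polynomial identity verified on $\Z_{\geq 0}$ holds on all of $\C$.
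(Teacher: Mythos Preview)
Your proof is correct and follows essentially the same approach as the paper: both reduce closure under multiplication to the Vandermonde-type identity expressing $\binom{x}{m}\binom{x}{n}$ as a nonnegative integer combination of $\binom{x}{m+n-k}$, prove it combinatorially for $x\in\Z_{\geq 0}$ by counting pairs of subsets stratified by intersection size, and then lift to all $\a$ via the polynomial identity principle. The only cosmetic difference is that the paper writes the coefficient as $\binom{m}{k}\binom{m+n-k}{m}$ rather than as the multinomial $\binom{m+n-k}{k,\,m-k,\,n-k}$, but these are equal.
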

\begin{proof}
$R_+(\a)$ is by definition closed under addition. Note that for closure under multiplication, it suffices to show that $\binom{\a}{k}\binom{\a}{l}\in R_+(\a)$ for any $k,l.$ But this follows from the simple combinatorial identity $$
\binom{\a}{k}\binom{\a}{l}=\sum_{i=0}^l\binom{k}{i}\binom{k+l-i}{k}\binom{\a}{k+l-i}.
$$
This identity can be proven by noting that a subset of $k$ elements and a subset of $l$ elements chosen from a set of $\a$ elements intersect in some $i$ elements, and by considering the number of possibilities for each $i.$ This identity is true for all positive integers $\a,$ but it is a polynomial identity in $\a,$ so it must work for all $\a,$ not necessarily positive integers. The identity immediately implies that $\binom{\a}{k}\binom{\a}{l}$ is a positive linear combination of binomial coefficients of $\a,$ as desired. Finally, $0,1\in R_+(\a),$ and they are the additive and multiplicative identities, respectively, completing the proof.
\end{proof}
From here we obtain a simple characterization of when $R_+(\a)$ is a ring:
\begin{lemma}\label{negative-one-ring}
$R_+(\a)$ is a ring if and only if $-1\in R_+(\a).$
\end{lemma}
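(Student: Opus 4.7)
The plan is a short two-direction argument that leans almost entirely on the semiring structure established in Lemma \ref{semiring}.

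For the forward direction, suppose $R_+(\a)$ is a ring. Since $1\in R_+(\a)$ by the definition (it is $\binom{\a}{0}$), a ring must contain its additive inverse, so $-1\in R_+(\a)$. This direction is essentially immediate.

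For the backward direction, suppose $-1\in R_+(\a)$. I already know from Lemma \ref{semiring} that $R_+(\a)$ is closed under addition and multiplication and contains $0$ and $1$. To upgrade this to a ring, I only need to produce additive inverses. Given any $t\in R_+(\a)$, the product $(-1)\cdot t$ lies in $R_+(\a)$ by multiplicative closure, and this product equals $-t$. Hence every element of $R_+(\a)$ has an additive inverse inside $R_+(\a)$, finishing the proof.

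There is no real obstacle here: the only conceptual point is noticing that having a single additive inverse (namely of the multiplicative identity) is enough to generate all additive inverses in a semiring, via multiplicative closure. No calculation with binomial identities is required beyond what was already done in Lemma \ref{semiring}.
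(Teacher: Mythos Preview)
Your proof is correct and takes essentially the same approach as the paper: the forward direction uses that $-1$ is the additive inverse of $1$, and the backward direction uses multiplicative closure of the semiring to obtain $-t=(-1)\cdot t$ for any $t\in R_+(\a)$.
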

\begin{proof}
The forward direction follows from the fact that $-1$ is the additive inverse of $1\in R_+(\a).$ For the backwards direction, it is sufficient to prove that additive inverses exist. But if $a\in R_+(\a),$ then $(-1)(a)=-a\in R_+(\a),$ as desired.
\end{proof}
\subsection{Rational Case}
A more subtle case than $\a$ an integer occurs for $\a$ a rational number. This case also shows some of the behaviors which one can also see in the general case. For this reason we discuss it separately below, before we treat the general case in Section \ref{General-Section}.

For example, say $\a=\frac{1}{2}.$ Then we have $$
\binom{\a}{2}=\frac{(1/2)(-1/2)}{2}=-\frac{1}{8}.
$$
We multiply by $8$ to see that $-1\in R_+(\a).$ By Lemma \ref{negative-one-ring} we see that this implies that $R_+\left(\frac{1}{2}\right)$ is a ring. In particular, it follows that $\Z\left[\frac{1}{2}\right],$ the set of all rational numbers with denominators that are powers of $2,$ is contained in $R_+(\a).$ On the other hand, we have \begin{align*}
    \binom{\a}{k}=\frac{\a(\a-1)\cdots(\a-k+1)}{k!}=\frac{1(1-2)\cdots(1-2(k-1))}{k!2^k}&=\frac{(2k-2)!(-1)^{k-1}}{(k-1)!k!2^k}\\
    &=\frac{C_{k-1}(-1)^{k-1}}{2^k},
\end{align*}
where $C_n,$ the $n$th Catalan number, is an integer. Hence, the number $\binom{\a}{k}\in\Z\left[\frac{1}{2}\right].$ Also, the binomial coefficients of $\a$ generate $R_+\left(\frac{1}{2}\right),$ implying that $R_+\left(\frac{1}{2}\right)=\Z\left[\frac{1}{2}\right].$ 

For general rational numbers $\a,$ however, we need some more notation to prove that $R_+(\a)=\Z[\a]$.

An important function on the integers, that is used throughout this paper, is the $p$-adic valuation $v_p(x),$ defined informally as the exponent of $p$ in the prime factorization of $x.$ Formally, it is defined for a prime $p$ recursively by $v_p(x)=0$ if $p$ and $x$ are relatively prime and $v_p(x)=1+v_p(x/p)$ otherwise. It is not difficult to see that $v_p(xy)=v_p(x)+v_p(y)$ and Legendre's formula shows that $v_p(k!)=\sum\limits_{n=1}^\infty\lfloor\frac{k}{p^n}\rfloor.$ Also, it is not hard to show that $v_p(a+b)\geq\min(v_p(a),v_p(b)).$
\begin{lemma}[Harman and Kalinov \cite{classify-symmetric-monoidal-functors}]\label{rational-case}
If $\a=\frac{r}{q}\in\Q$ is a rational number with $(r,q)=1$ $(r$ and $q$ are relatively prime) and $q\neq1,$ then $R_+(\a)=\Z[\a]=\Z\left[\frac{1}{q}\right].$
\end{lemma}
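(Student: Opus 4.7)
The plan is to establish the two containments $R_+(\a) \subseteq \Z[1/q]$ and $\Z[1/q] \subseteq R_+(\a)$ separately, using $p$-adic analysis for the former and a combination of Lemma \ref{negative-one-ring} with B\'ezout for the latter. The equality $\Z[\a] = \Z[1/q]$ is elementary: since $\gcd(r,q) = 1$, B\'ezout gives integers $u,v$ with $ur + vq = 1$, so $u\a + v = 1/q$, and conversely $\a = r \cdot (1/q)$.

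For the containment $R_+(\a) \subseteq \Z[1/q]$, it suffices to verify that each generator $\binom{\a}{k}$ lies in $\Z[1/q]$, i.e.\ that $v_p\!\left(\binom{\a}{k}\right) \geq 0$ for every prime $p \nmid q$. For such a prime, $\a = r/q$ is a $p$-adic integer, and the polynomial $\binom{x}{k} \in \Q[x]$ takes integer (hence $p$-integral) values on all of $\Z_{\geq 0}$; since $\Z_{\geq 0}$ is dense in $\Z_p$ and $\Z_p$ is closed in $\Q_p$, continuity of polynomials forces $\binom{\a}{k} \in \Z_p$. Having no prime factor other than those of $q$ in its denominator, $\binom{\a}{k}$ lies in $\Z[1/q]$.

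For the reverse containment, the first step is to show that $R_+(\a)$ is a ring by producing $-1 \in R_+(\a)$ and invoking Lemma \ref{negative-one-ring}. To do this I would exhibit an index $k$ such that $\binom{\a}{k}$ is a negative rational. When $\a < 0$, every factor $\a - i$ is negative, so $\binom{\a}{k}$ has sign $(-1)^k$ and any odd $k$ works. When $\a > 0$ (and noncan integer, since $q \neq 1$), the number of negative factors in $\prod_{i=0}^{k-1}(\a - i)$ equals $k - \lfloor \a \rfloor - 1$ for $k > \lfloor \a \rfloor$, which one can make odd by choice of $k$. Writing $\binom{\a}{k} = -a/b$ with $a,b \in \Z_{>0}$, we have $b \cdot \binom{\a}{k} = -a \in R_+(\a)$, and then $-a + (a-1) \cdot 1 = -1 \in R_+(\a)$.

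Once $R_+(\a)$ is a ring containing $1$ and $\a = r/q$, it contains $\Z$ and hence, by the B\'ezout relation $u \a + v = 1/q$ above, it contains $1/q$; closure under the ring operations then gives $\Z[1/q] \subseteq R_+(\a)$, completing the proof. There is no serious obstacle: the only point requiring a small amount of care is ruling out the integer case when choosing the sign of $\binom{\a}{k}$, which is handled by the hypothesis $q \neq 1$.
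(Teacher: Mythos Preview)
Your proof is correct, and the overall architecture matches the paper's: first show $-1\in R_+(\a)$ to get the ring structure and hence $\Z[\a]\subseteq R_+(\a)$, then show each $\binom{\a}{k}\in\Z[1/q]$ for the reverse containment. Your treatment of the $-1$ step is essentially identical to the paper's (the paper takes $k=\lfloor\a\rfloor+2$ for $\a>0$ and uses $q\a=r<0$ for $\a<0$, which is your $k=1$ case).

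The genuine difference is in the containment $R_+(\a)\subseteq\Z[1/q]$. The paper argues by an explicit count: writing $\binom{\a}{k}=\frac{r(r-q)\cdots(r-(k-1)q)}{q^kk!}$, it shows for each prime $p\nmid q$ that among $0,1,\dots,k-1$ at least $\lfloor k/p^n\rfloor$ values of $i$ satisfy $p^n\mid r-qi$, so the numerator carries at least $v_p(k!)$ factors of $p$. Your argument replaces this computation with a one-line topological observation: $\binom{x}{k}$ maps $\Z_{\ge0}$ into $\Z$, $\Z_{\ge0}$ is dense in $\Z_p$, $\Z_p$ is closed, and polynomials are continuous, so $\binom{\a}{k}\in\Z_p$ whenever $\a\in\Z_p$. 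This is cleaner and more conceptual; the paper's explicit estimate, on the other hand, is the template that later gets generalized to prime ideals $\pp$ in Lemma~\ref{linear-is-not-removed} and Lemma~\ref{ideal-inverse-in-iff-valuation-negative}, so it has pedagogical value as a warm-up for the general case.
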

We include the proof of this lemma because it provides a certain insight into the general case, proved in Proposition \ref{exact-ideal-representation}.
\begin{proof}
First, assume that $\a$ is positive. Then say $(\a$ is not an integer) $n<\a<n+1$ for some integer $n.$ We then have that $$\binom{\a}{n+2}=\frac{\a(\a-1)\dots(\a-n)(\a-n-1)}{(n+2)!}=\frac{a}{b}<0,$$ with $a<0$ and $b>0.$ We multiply by $b$ to get that $b\binom{\a}{n+2}=a<0\in R_+(\a).$ But then $-1-a\geq0\in R_+(\a),$ so we add them to get that $-1\in R_+(\a).$ If $\a=\frac{r}{q}<0,$ then $q\a=r<0$ so $-r-1+r=-1\in R_+(\a).$ In any case, $-1\in R_+(\a),$ thus, by Lemma \ref{negative-one-ring}, $R_+(\a)$ is a ring so $\Z[\a]\subseteq R_+(\a).$ Thus, it suffices to show that $R_+(\a)\subseteq\Z[\a].$

We show that, for any $k,$ $\binom{\a}{k}\in\Z[\a].$ Indeed, because the binomial coefficients of $\a$ generate $R_+(\a),$ it would follow that $R_+(\a)\subseteq\Z[\a],$ as desired. Now, we have \begin{align*}
\binom{\a}{k}=\frac{\a(\a-1)\dots(\a-k+1)}{k!}&=\frac{(r/q)(r/q-1)\dots(r/q-k+1)}{k!}\\
&=\frac{r(r-q)\dots(r-q(k-1))}{q^kk!}.
\end{align*}
For any prime $p$ not dividing $q,$ $q$ must be relatively prime to $p^n$ for every positive integer $n.$ Hence, $q$ has an inverse modulo $p^n,$ say $qt_n\equiv1\pmod{p^n}.$ Then $qt_nr\equiv r\pmod{p^n},$ so $r-q(t_nr)$ is divisible by $p^n.$ Note that this argument holds for any number equivalent to $t_nr$ modulo $p^n,$ in particular there are at least $\lfloor\frac{k}{p^n}\rfloor$ numbers $i$ from $0$ to $k-1$ for which $r-qi$ is divisible by $p^n.$

But we see that in the product $r(r-q)\dots(r-q(k-1)),$ at least $\lfloor\frac{k}{p}\rfloor$ factors are divisible by $p,$ so they contribute one factor of $p$ to the product. At least $\lfloor\frac{k}{p^2}\rfloor$ factors are divisible by $p^2,$ so they contribute an extra factor of $p$ to the product. We continue, to get that $$
v_p(r(r-q)\dots(r-q(k-1)))\geq\sum\limits_{n=1}^\infty\floor{\frac{k}{p^n}}=v_p(k!).
$$
Thus, when the expression $\binom{\a}{k}$ is put in simplest terms, the denominator has no factors of $p.$ But $p$ was chosen as an arbitrary prime not dividing $q,$ so all primes dividing the denominator of the simplified form of $\binom{\a}{k}$ also divide $q,$ hence $\binom{\a}{k}\in\Z\left[\frac{1}{q}\right]=\Z[\a],$ as desired.
\end{proof}
Lemma \ref{rational-case} provides the full characterization of $R_+(\a)$ for rational numbers $\a.$ 

\subsection{Finite Difference Operators}
We introduce a few definitions and results that are valuable for the calculation of the coefficients of the binomial expansion of a general polynomial. They will be used in Sections \ref{x-in-R+y-then-R+x-Section} and \ref{Ring-Section}. All of the results of this section are well-known and can be found in Jordan and Jord\'an \cite{finite-differences}, but we include the proofs for convenience. 
\begin{definition}\label{difference-operator-definition}
For a polynomial $p(x),$ define the finite difference $\Delta p(x)=p(x+1)-p(x).$ For $n=0,$ define $\Delta^0p(x)=p(x),$ and for any nonnegative integer $n,$ define the iterated finite difference $$
\Delta^{n+1}p(x)=\Delta(\Delta^np(x))=\Delta^np(x+1)-\Delta^np(x).
$$
\end{definition}

Note that if $p(x)$ is of degree $n,$ then $\Delta p(x)=p(x+1)-p(x)$ is of degree $n-1.$ To see this, note that the highest-degree terms in the monomial expansions of $p(x)$ and $p(x+1)$ are equal. 
An important property of the finite difference operator $\Delta$ is that it behaves nicely when applied to binomial coefficients. Specifically, $$
\Delta\binom{x+a}{k}=\binom{x+a+1}{k}-\binom{x+a}{k}=\binom{x+a}{k-1}.
$$
From here, we see that $\Delta^i\binom{x+a}{k}=\binom{x+a}{k-i}.$
\begin{lemma}[Cahen and Chabert \cite{int-value-polys}]\label{expansion-into-binomial-coefficients}
For any polynomial $p(x)$ of degree $n,$ its expansion as a linear combination of binomial coefficients is $p(x)=\sum\limits_{i=0}^n(\Delta^ip)(0)\binom{x}{i}.$
\end{lemma}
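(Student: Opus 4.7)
The plan is to leverage the fact, already observed in the paragraph preceding the lemma, that applying $\Delta$ to a binomial coefficient shifts its index down by one: $\Delta^i \binom{x}{k} = \binom{x}{k-i}$ (with the convention that the binomial coefficient is zero when the lower index is negative). Together with the observation that $\binom{x}{k}$ has degree exactly $k$, this gives us both existence of the expansion and a clean way to extract its coefficients.

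First I would argue that $\{\binom{x}{i}\}_{i=0}^n$ is a basis for the space of polynomials of degree at most $n$. Since $\binom{x}{i}$ has degree exactly $i$, the family is linearly independent, and by dimension count it spans. Therefore any polynomial $p$ of degree $n$ admits a unique expression $p(x) = \sum_{i=0}^n c_i \binom{x}{i}$, and the content of the lemma is the identification $c_i = (\Delta^i p)(0)$.

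Next I would compute $c_i$ by applying the iterated finite difference operator termwise. Using $\Delta^j \binom{x}{i} = \binom{x}{i-j}$, linearity of $\Delta^j$ gives
\[
(\Delta^j p)(x) = \sum_{i=0}^n c_i \binom{x}{i-j} = \sum_{i=j}^n c_i \binom{x}{i-j},
\]
since terms with $i < j$ vanish. Evaluating at $x = 0$, we use $\binom{0}{k} = 0$ for $k \geq 1$ and $\binom{0}{0} = 1$, so only the $i = j$ term survives, yielding $(\Delta^j p)(0) = c_j$. Substituting back gives the claimed formula.

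The only subtlety is making sure that $\Delta^i \binom{x}{k} = \binom{x}{k-i}$ extends correctly for $i > k$, i.e., that the result is genuinely zero. This follows by induction on $i$ from the one-step identity $\Delta \binom{x}{k} = \binom{x}{k-1}$, together with $\Delta$ applied to the constant polynomial $\binom{x}{0} = 1$ being zero. This is a routine check, and I would include it as a brief parenthetical remark rather than a separate step. No serious obstacle is anticipated; the proof is essentially a direct computation once the basis property of $\{\binom{x}{i}\}$ is recorded.
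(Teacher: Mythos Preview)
Your proposal is correct and follows essentially the same approach as the paper's proof: expand $p$ in the binomial basis, apply $\Delta^j$ termwise using $\Delta^j\binom{x}{i}=\binom{x}{i-j}$, and evaluate at $0$ to isolate the $j$-th coefficient. The paper's version is slightly terser and omits the explicit remark about the $i>k$ case, but the argument is identical.
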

\begin{proof}
Since each $\binom{x}{i}$ is a polynomial of degree $i,$ the collection of all binomial coefficients forms a basis for $\Q[x]$ as a vector space over $\Q.$ In particular, say that $p(x)=\sum\limits_{i=0}^na_i\binom{x}{i}.$ We see that $$
\Delta^kp(x)=\sum\limits_{i=k}^na_i\binom{x}{i-k}=\sum\limits_{i=0}^{n-k}a_{k+i}\binom{x}{i}.
$$
Then $$
(\Delta^kp)(0)=\sum\limits_{i=0}^{n-k}a_{k+i}\binom{0}{i}=a_k\binom{0}{0}=a_k,
$$
as desired.
\end{proof}
We can also visualize $\Delta^nf(x)$ in another way.
\begin{lemma}[Jordan and Jord\'an \cite{finite-differences}]\label{iterated-finite-difference}
For any nonnegative integer $n,$ $\Delta^nf(x)=\sum\limits_{i=0}^n(-1)^i\binom{n}{i}f(x+n-i).$
\end{lemma}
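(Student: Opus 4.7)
The plan is to prove the identity by induction on $n$, the key ingredient being Pascal's rule $\binom{n+1}{j} = \binom{n}{j} + \binom{n}{j-1}$. The base case $n=0$ is immediate since both sides equal $f(x)$. Alternatively, one can present a cleaner operator-theoretic argument: introduce the shift operator $E$ defined by $(Ef)(x) = f(x+1)$ and the identity operator $I$, observe that $\Delta = E - I$, and note that $E$ and $I$ commute, so the binomial theorem applies to operators and gives
\[
\Delta^n = (E-I)^n = \sum_{i=0}^n (-1)^i \binom{n}{i} E^{n-i},
\]
from which the desired formula follows by applying both sides to $f$ at $x$. I would likely present the induction, since the paper has not defined the shift operator and the inductive argument is self-contained.

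For the inductive step, assume the formula holds for $n$. Then from Definition \ref{difference-operator-definition},
\[
\Delta^{n+1}f(x) = \Delta^n f(x+1) - \Delta^n f(x) = \sum_{i=0}^n (-1)^i \binom{n}{i} f(x+(n+1)-i) - \sum_{i=0}^n (-1)^i \binom{n}{i} f(x+n-i).
\]
In the second sum, reindex by $j = i+1$, so that it becomes $\sum_{j=1}^{n+1}(-1)^j \binom{n}{j-1} f(x+(n+1)-j)$. Relabel $i$ as $j$ in the first sum. The $j=0$ contribution is $\binom{n}{0} f(x+n+1) = \binom{n+1}{0} f(x+n+1)$, the $j=n+1$ contribution is $(-1)^{n+1}\binom{n}{n} f(x) = (-1)^{n+1}\binom{n+1}{n+1} f(x)$, and for each intermediate $1 \le j \le n$ the coefficient of $f(x+(n+1)-j)$ is $(-1)^j\left[\binom{n}{j} + \binom{n}{j-1}\right] = (-1)^j \binom{n+1}{j}$ by Pascal's rule.

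Combining these three cases yields
\[
\Delta^{n+1} f(x) = \sum_{j=0}^{n+1} (-1)^j \binom{n+1}{j} f(x+(n+1)-j),
\]
which completes the induction. There is no real obstacle here beyond careful index bookkeeping; the only substantive combinatorial input is Pascal's identity, and the proof is entirely mechanical once the reindexing in the second sum is performed.
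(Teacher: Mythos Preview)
Your proof is correct and takes essentially the same approach as the paper: induction on $n$, with the inductive step using $\Delta^{n+1}f(x)=\Delta^n f(x+1)-\Delta^n f(x)$ and Pascal's identity to combine the two sums. The paper's write-up separates out the boundary terms before merging the middle terms, while you reindex first and then split off the boundary cases, but this is purely cosmetic.
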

\begin{proof}
We prove this by induction on $n.$ The base case, $n=0,$ is trivial - the sum is just $f(x).$ Now, suppose the formula is true for $n.$ Then we have \begin{align*}
&\Delta^{n+1}f(x)=\Delta^nf(x+1)-\Delta^nf(x)\\
&=\sum_{i=0}^n(-1)^i\binom{n}{i}f(x+1+n-i)-\sum_{i=0}^n(-1)^i\binom{n}{i}f(x+n-i)\\
&=f(x+n+1)-(-1)^nf(x)+\sum_{i=0}^{n-1}(-1)^{i+1}\left(\binom{n}{i+1}f(x+n-i)+\binom{n}{i}f(x+n-i)\right)\\
&=f(x+n+1)+(-1)^{n+1}f(x)+\sum_{i=0}^{n-1}(-1)^{i+1}\binom{n+1}{i+1}f(x+n-i)\\
&=\sum_{i=0}^{n+1}(-1)^i\binom{n+1}{i}f(x+n+1-i),
\end{align*}
completing the inductive step.
\end{proof}
Just like for the derivative, there is a version of the product rule for $\Delta.$ Specifically, we have \begin{align*}
    \Delta(p(x)q(x))&=p(x+1)q(x+1)-p(x)q(x)\\
    &=p(x+1)q(x+1)-p(x+1)q(x)+p(x+1)q(x)-p(x)q(x)\\
    &=p(x+1)\Delta q(x)+q(x)\Delta p(x).
\end{align*}
This generalizes for $\Delta^k$:
\begin{lemma}[Leibniz Rule]\label{product-rule-for-difference-quotient}
$\Delta^k(p(x)q(x))=\sum\limits_{i=0}^k\binom{k}{i}\Delta^{k-i}p(x+i)\Delta^iq(x).$
\end{lemma}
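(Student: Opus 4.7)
The plan is to proceed by induction on $k$, leveraging the one-step product rule $\Delta(pq)(x) = p(x+1)\Delta q(x) + q(x)\Delta p(x)$ already established in the text just above the statement. The base case $k=0$ is immediate since both sides reduce to $p(x)q(x)$, so all the work is in the inductive step.

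For the inductive step, I would assume the formula holds for some $k \geq 0$ and compute $\Delta^{k+1}(pq) = \Delta(\Delta^k(pq))$ by applying $\Delta$ to each summand of the inductive hypothesis. The key observation is that $\Delta$ acts only on the variable $x$, and commutes with the shift $x \mapsto x+i$, so applying the one-step product rule to a typical term $\Delta^{k-i}p(x+i)\Delta^i q(x)$ yields
\[
\Delta^{k-i}p(x+i+1)\Delta^{i+1}q(x) + \Delta^{k-i+1}p(x+i)\Delta^i q(x).
\]
Multiplying by $\binom{k}{i}$ and summing over $i$ from $0$ to $k$ gives two sums.

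The final step is to reindex the first sum (substituting $j = i+1$) so that both sums are indexed by the same ``level'' $j$, identifying the factor $\Delta^{k+1-j}p(x+j)\Delta^j q(x)$. In the resulting combined sum, the coefficient of this factor for interior $j$ becomes $\binom{k}{j-1} + \binom{k}{j}$, which collapses to $\binom{k+1}{j}$ by Pascal's identity; the boundary terms $j=0$ and $j=k+1$ contribute $\binom{k}{0}=\binom{k+1}{0}$ and $\binom{k}{k}=\binom{k+1}{k+1}$ respectively, matching the desired formula.

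The main obstacle is not conceptual but bookkeeping: one must carefully track the shift $x \mapsto x+i$ in the $p$-factor when applying the one-step rule (the shift gets incremented on the ``$\Delta q$ side'' but not on the ``$\Delta p$ side''), and then align indices correctly before invoking Pascal's identity. Once the reindexing is done cleanly, the inductive step closes and the lemma follows.
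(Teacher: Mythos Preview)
Your proposal is correct and follows essentially the same approach as the paper: induction on $k$, applying the one-step product rule termwise, reindexing one of the two resulting sums, and collapsing via Pascal's identity. The only cosmetic difference is that the paper takes $k=1$ as its base case while you take $k=0$; the bookkeeping you describe (tracking which factor picks up the extra shift) matches the paper's computation exactly.
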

\begin{proof}
We prove this by induction. The base case $k=1$ was shown above in the calculation of $\Delta(p(x)q(x)).$ Now, suppose the formula holds true for $k.$ Then we have \begin{align*}
&\Delta^{k+1}(p(x)q(x))=\Delta(\Delta^k(p(x)q(x)))=\Delta\left(\sum_{i=0}^k\binom{k}{i}\Delta^{k-i}p(x+i)\Delta^iq(x)\right)\\
&=\sum_{i=0}^k\left(\binom{k}{i}\Delta^{k-i}p(x+i+1)\Delta^{i+1}q(x)+\binom{k}{i}\Delta^{k-i+1}p(x+i)\Delta^iq(x)\right)
\end{align*}
\begin{align*}
&=\sum_{j=1}^{k+1}\binom{k}{j-1}\Delta^{k-j+1}p(x+j)\Delta^jq(x)+\sum_{i=0}^k\binom{k}{i}\Delta^{k-i+1}p(x+i)\Delta^iq(x)\\
&=p(x+k+1)\Delta^{k+1}q(x)+q(x)\Delta^{k+1}p(x)+\sum_{i=1}^k\binom{k+1}{i}\Delta^{k-i+1}p(x+i)\Delta^iq(x)\\
&=\sum_{i=0}^{k+1}\binom{k+1}{i}\Delta^{k+1-i}p(x+i)\Delta^iq(x),
\end{align*}
completing the inductive step.
\end{proof}
\section{Combinatorial proof of Lemma \ref{binomial-coeffs-of-binomial-coeffs-1}}\label{x-in-R+y-then-R+x-Section}
In this section we prove the following result (Lemma \ref{binomial-coeffs-of-binomial-coeffs-1}):
\begin{lemma}\label{binomial-coeffs-of-binomial-coeffs}
If $\b\in R_+(\a),$ then $R_+(\b)\subset R_+(\a).$
\end{lemma}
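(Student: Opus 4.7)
The natural approach is to use a combinatorial/polynomial-identity argument. Since $R_+(\beta)$ is generated as a monoid by the binomial coefficients $\binom{\beta}{k}$, it suffices to show that $\binom{\beta}{k} \in R_+(\alpha)$ for every $k \geq 0$. So the whole task reduces to proving that whenever $\beta = \sum_{i=0}^{N} a_i \binom{\alpha}{i}$ with $a_i \in \Z_{\geq 0}$, each $\binom{\beta}{k}$ can be written as a nonnegative integer combination of binomial coefficients of $\alpha$.

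The plan is to prove the identity first for $\alpha$ a large positive integer via a counting argument, then upgrade to arbitrary $\alpha$ by polynomial interpolation. Concretely, fix a positive integer $n$ (eventually $n = \alpha$), let $X$ be an $n$-element set, and build an auxiliary finite set $Y$ by placing, for each $i$ and each $i$-subset $S \subseteq X$, exactly $a_i$ labeled copies of $S$ into $Y$. Then $|Y| = \sum_i a_i \binom{n}{i} = \beta$, so $\binom{\beta}{k}$ is the number of $k$-subsets of $Y$. I would classify these $k$-subsets $T \subseteq Y$ by the size $j$ of the union $U(T) \subseteq X$ of the underlying subsets of their elements. By symmetry (the symmetric group $S_n$ acts on $X$ and hence on $Y$, preserving the union structure), the number of $k$-subsets $T$ with $U(T)$ equal to a specified $j$-subset of $X$ depends only on $j$; call this nonnegative integer $f(j,k)$. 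Summing over all $j$-subsets of $X$ gives the identity
\[
\binom{\beta}{k} \;=\; \sum_{j=0}^{kN} f(j,k)\binom{n}{j},
\]
valid for all sufficiently large positive integers $n$.

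Both sides are polynomials in $n$ of bounded degree (the left-hand side because $\beta$ is polynomial in $\alpha$ and $\binom{\cdot}{k}$ is polynomial; the right-hand side manifestly). An identity of polynomials that holds on infinitely many integers holds identically, so the same identity is valid with $n$ replaced by the original $\alpha$. Since the coefficients $f(j,k)$ are nonnegative integers, this exhibits $\binom{\beta}{k}$ as an element of $R_+(\alpha)$, finishing the proof.

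The main technical point to be careful about is the \emph{well-definedness} of $f(j,k)$: one must verify that the count of $k$-subsets of $Y$ with a prescribed union does not depend on which $j$-subset of $X$ is chosen, which is where the $S_n$-symmetry of the construction is essential. A secondary point is to ensure that labeled copies (as opposed to a multiset interpretation) are used, so that distinct copies of the same $S \subseteq X$ contribute distinct elements of $Y$, and hence $|Y| = \beta$ exactly rather than up to collisions. Once those are in place, the polynomial-interpolation step is routine.
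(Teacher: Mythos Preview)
Your argument is correct and is in the same combinatorial spirit as the paper's, but the packaging is different in an instructive way. Both proofs reduce to showing that for $f(x)=\sum_i a_i\binom{x}{i}\in R_+(x)$ the coefficients of $\binom{f(x)}{k}$ in the binomial basis are nonnegative integers, and both verify this for large integer inputs and then extend by polynomial identity. The paper expresses the $n$-th coefficient as the alternating sum $\sum_{i=0}^n(-1)^i\binom{n}{i}\binom{f(n-i)}{k}$ via the finite-difference formula, then builds sets $S_1,\dots,S_n$ whose atoms have sizes $a_{n-k}$ and bounds an inclusion--exclusion count by the total $\binom{f(n)}{k}$ to conclude nonnegativity. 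Your construction is essentially the complement of theirs: an element of your $Y$ with underlying $i$-subset $S\subseteq X$ corresponds to an element lying in exactly the sets $S_b$ with $b\notin S$. What you gain is directness: by grouping $k$-subsets of $Y$ according to the union $U(T)$ and invoking $S_n$-symmetry, your $f(j,k)$ is visibly a nonnegative integer count, so no inequality step is needed. What the paper's route buys is an explicit closed form for the coefficients (the alternating sum), which connects to their finite-difference machinery used elsewhere. One point you leave implicit but should state when writing it up: $f(j,k)$ depends only on the internal structure of the chosen $j$-subset $V$ (the relevant elements of $Y$ are the labeled subsets of $V$), hence is independent of $n$, which is what makes the right-hand side a fixed polynomial in $n$.
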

This was first proved by Kalinov and Harman in \cite{classify-symmetric-monoidal-functors} by composing the corresponding symmetric monoidal functors. Here, we provide a purely combinatorial proof.
\begin{proof}
Note that $R_+(\a)$ is equal to the image of $R_+(x)$ by the evaluation homomorphism sending $x$ to $\a.$ Hence, it is sufficient to prove a similar fact for $x$ an indeterminate. Specifically, we need to prove that for any $f(x)\in R_+(x),$ the binomial coefficient $\binom{f(x)}{k}\in R_+(x).$ The additive monoid $R_+(f(x))$ generated by the $\binom{f(x)}{k}$ is then contained in $R_+(x),$ as desired. If we expand $\binom{f(x)}{k}$ as a linear combination of binomial coefficients of $x,$ we see that it is sufficient to show that the coefficients of the linear combination are nonnegative integers. Lemma \ref{expansion-into-binomial-coefficients} gives such an expansion, and shows that it is sufficient to prove that the differences $\Delta^n\binom{f(x)}{k}(0)$ are nonnegative integers. By the formula in Lemma \ref{iterated-finite-difference}, it is sufficient to prove that $$
\sum\limits_{i=0}^n(-1)^i\binom{n}{i}\binom{f(n-i)}{k}
$$
is a nonnegative integer. But it is an integer because $f(x)$ is integer-valued for integral inputs and hence all $\binom{f(n-i)}{k}$ are integers, so it is sufficient to prove that the expression above is nonnegative. 

Say $f(x)=a_0+a_1\binom{x}{1}+\cdots+a_t\binom{x}{t}.$ Note that, for $n$ an integer and $s>n,$ all the terms $a_s\binom{n}{s}$ for $s>n$ are removed because $\binom{n}{s}=0$ for any $s>n.$ Therefore, we can write \begin{align*}
    f(n-i)&=a_0+a_1(n-i)+a_2\binom{n-i}{2}+\cdots+a_t\binom{n-i}{t}\\
    &=a_0+a_1(n-i)+a_2\binom{n-i}{2}+\cdots+a_{n-i-1}(n-i)+a_{n-i},
\end{align*} where $a_k=0$ for $k>t.$ Notably, we wish to show that, for any nonnegative integers $a_0,a_1,\dots,a_n,$ $$
\sum\limits_{i=0}^n(-1)^i\binom{n}{i}\bigbinom{\sum\limits_{j=0}^{n-i}a_j\binom{n-i}{j}}{k}\geq0.
$$
This expression can be interpreted as a purely combinatorial formula, so we prove it in this way.

Define $I=\{1,\dots,n\}.$ For any sets $S_1,S_2,\dots,S_n,$ the set $S_1\cup\cdots\cup S_n$ can be represented as the disjoint union of sets $$
S_1\cup\cdots\cup S_n = \coprod_{k=1}^n\coprod_{\{b_1,\dots,b_k\}\subset I}(S_{b_1}\cap S_{b_2}\cap\cdots\cap S_{b_k})\backslash\bigcup_{m\not\in\{b_1,\dots,b_k\}}S_m.
$$
This formula is valid because for any $x\in S_1\cup\cdots\cup S_n,$ $x$ belongs to a unique set in the disjoint union. Specifically, take $S_{b_1},S_{b_2},\dots,S_{b_k}$ to be the sets containing $x$ so that $x$ belongs to the intersection of the sets $S_{b_j}$ but not to the other sets $S_m.$

Similarly, the set $S_i$ is also the disjoint union of all such sets involving $S_i$ in the intersection, specifically $$
S_i=\coprod_{k=0}^{n-1}\coprod_{\{b_1,\dots,b_k\}\subset I\backslash\{i\}}(S_i\cap S_{b_1}\cap S_{b_2}\cap\cdots\cap S_{b_k})\backslash\bigcup_{m\not\in\{i,b_1,\dots,b_k\}}S_m.
$$
Now we would like to define $S_1,\dots,S_n$ by specifying the cardinality of each set in the disjoint union (we don't care about the nature of the elements here, only about the cardinality of the sets). Let each set $(S_{b_1}\cap S_{b_2}\cap\cdots\cap S_{b_k})\backslash\{S_m\mid m\not\in\{b_1,\dots,b_k\}\}$ have exactly $a_{n-k}$ elements, where $a_i$ are the coefficients of the binomial expansion of $f(x).$ Again, it is not important to us what these elements are, simply that they are distinct. See Figure \ref{fig:venn-diagram} for the case when $n=3.$
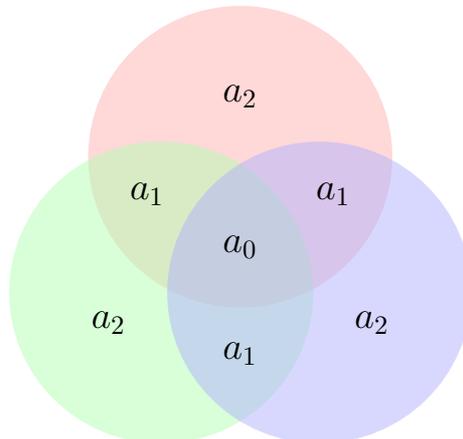
\begin{figure}[H]
    \centering
\begin{tikzpicture}
\begin{scope}[opacity=0.5]
\fill[red!30!white]   ( 90:1.2) circle (2);
    \fill[green!30!white] (210:1.2) circle (2);
    \fill[blue!30!white]  (330:1.2) circle (2);
\end{scope}
\node at ( 90:2)  [font=\large]  {$a_2$};
  \node at (210:2)  [font=\large]  {$a_2$};
  \node at (330:2)  [font=\large]  {$a_2$};
  \node at (30:1.414)  [font=\large]  {$a_1$};
  \node at (150:1.414)  [font=\large]  {$a_1$};
  \node at (270:1.414)  [font=\large]  {$a_1$};
  \node [font=\large] {$a_0$};
\end{tikzpicture}
\caption{Sets $S_1,S_2,S_3$ and the number of elements in each intersection}
\label{fig:venn-diagram}
\end{figure}

\noindent For any $p\leq n,$ we see that the set $S_1\cap S_2\cap\cdots\cap S_p$ has exactly $$
a_{n-p}+(n-p)a_{n-p-1}+\binom{n-p}{2}a_{n-p-2}+\cdots+\binom{n-p}{2}a_2+(n-p)a_1+a_0
$$
elements. This is because a nonempty set $(S_{b_1}\cap S_{b_2}\cap\cdots\cap S_{b_k})\backslash\bigcup\limits_{m\not\in\{b_1,\dots,b_k\}}S_m$ is contained in $S_1\cap S_2\cap\cdots\cap S_p$ if and only if $\{1,2,\dots,p\}\subset\{b_1,b_2,\dots,b_k\}.$ Therefore, the number of such sets containing $a_{n-p-k}$ elements is exactly $\binom{n-p}{n-k-p}=\binom{n-p}{k}.$ In addition, the set $S_1\cup S_2\cup\cdots\cup S_n$ has exactly $$
na_{n-1}+\binom{n}{2}a_{n-2}+\cdots+\binom{n}{2}a_2+na_1+a_0
$$
elements, because for any $k,$ there are $\binom{n}{k}$ sets $(S_{b_1}\cap S_{b_2}\cap\cdots\cap S_{b_k})\backslash\{S_m\mid m\not\in\{b_1,\dots,b_k\}\}$ corresponding to the different choices of $\{b_1,\dots,b_k\}\subset\{1,\dots,n\}.$

Consider the number of ways to choose $k$ elements out of $S_1\cup S_2\cup\cdots\cup S_n$ such that they all lie in a single one of the $S_i.$ On the one hand, this is bounded above by the total number of ways to choose $k$ elements out of $S_1\cup S_2\cup\cdots\cup S_n,$ equal to $$
\bigbinom{na_{n-1}+\binom{n}{2}a_{n-2}+\cdots+\binom{n}{2}a_2+na_1+a_0}{k}.
$$
Note that this, in turn, is bounded above by $$
\binom{f(n)}{k}=\bigbinom{a_n+na_{n-1}+\cdots+\binom{n}{2}a_2+na_1+a_0}{k}.
$$
On the other hand, by the principle of inclusion-exclusion, it is equal to \begin{align*}
&\sum_{b\in I}\bigbinom{\abs{S_b}}{k}-\sum_{b_1,b_2\in I}\bigbinom{\abs{S_{b_1}\cap S_{b_2}}}{k}+\cdots+(-1)^{n+1}\sum_{b_1,\dots,b_n\in I}\bigbinom{\abs{S_{b_1}\cap\cdots\cap S_{b_n}}}{k}\\
=&n\bigbinom{a_{n-1}+(n-1)a_{n-2}+\cdots+a_0}{k}-\cdots+n(-1)^{n-2}\bigbinom{a_1+a_0}{k}+(-1)^{n-1}\bigbinom{a_0}{k}\\
=&n\binom{f(n-1)}{k}-\binom{n}{2}\binom{f(n-2)}{k}+\cdots+n(-1)^{n-2}\binom{f(1)}{k}+(-1)^{n-1}\binom{f(0)}{k}.
\end{align*}
We subtract this from $\binom{f(n)}{k}$ to obtain the inequality $$
\binom{f(n)}{k}-n\binom{f(n-1)}{k}+\binom{n}{2}\binom{f(n-2)}{k}-\cdots+n(-1)^{n-1}\binom{f(1)}{k}+(-1)^n\binom{f(0)}{k}\geq0,
$$
as desired.
\end{proof}
\section{Ring structure}\label{Ring-Section}
In this section we prove (the nontrivial direction of) Theorem \ref{intro-its-a-ring}. The logic of the proof is to start with the minimal polynomial of the algebraic number, and then to prove that we can obtain an expression for $-1$ in terms of positive integer linear combinations of $\binom{\a}{k}$ using multiples of the polynomial. We start with a few basic results and definitions.
\begin{lemma}\label{integral-in-normal-basis-is-integral-in-binom}
If $p(x)$ is a polynomial with integral coefficients with $p(x)=\sum\limits_{i=0}^na_i\binom{x}{i},$ then the $a_i$ are integers.
\end{lemma}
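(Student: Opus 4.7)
The plan is to invoke the two finite-difference lemmas proved in Section~\ref{Background-Section}, which together pin down each coefficient $a_i$ as an explicit integer linear combination of integer values of $p$.

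First I would observe, by Lemma~\ref{expansion-into-binomial-coefficients}, that the coefficients in the binomial basis are given by
\[
a_i = (\Delta^i p)(0),
\]
so it suffices to show that each $(\Delta^i p)(0)$ is an integer. Next, I would apply Lemma~\ref{iterated-finite-difference}, which yields
\[
(\Delta^i p)(0) \;=\; \sum_{j=0}^{i}(-1)^j\binom{i}{j}\,p(i-j).
\]
The right-hand side is a $\Z$-linear combination of the values $p(0), p(1), \dots, p(i)$, so once we know those values are integers we are done.

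The only remaining point is that a polynomial with integer coefficients (in the monomial basis) takes integer values at integer inputs; this is immediate from the fact that $\Z$ is closed under addition and multiplication, so $p(m) \in \Z$ for every $m \in \Z$. Combining with the display above shows that $a_i \in \Z$, completing the proof.

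There is no real obstacle here: the statement is essentially a bookkeeping consequence of the two already-established finite difference identities, and the only subtlety is noting the (trivial) implication ``integer coefficients $\Rightarrow$ integer valued,'' which is what feeds into Lemma~\ref{iterated-finite-difference}. An alternative one-line route would be induction on $\deg p$: strip off the top term $c_n x^n$ by subtracting $c_n \cdot n!\binom{x}{n}$, which is a polynomial with integer monomial coefficients whose leading binomial coefficient is the integer $c_n n!$, leaving a degree-$(n{-}1)$ polynomial still with integer monomial coefficients to which induction applies. Either route is routine, so I would present the finite-difference version for consistency with the rest of the section.
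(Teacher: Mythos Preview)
Your proof is correct, but it follows a different route from the paper's. The paper argues via the semiring structure: since $R_+(x)$ is closed under multiplication (Lemma~\ref{semiring}) and $x=\binom{x}{1}\in R_+(x)$, each monomial $x^k$ lies in $R_+(x)$, i.e.\ is a \emph{nonnegative} integer combination of binomial coefficients; linearity then handles any $p\in\Z[x]$. Your argument instead reads off $a_i=(\Delta^i p)(0)$ from Lemma~\ref{expansion-into-binomial-coefficients} and expands it via Lemma~\ref{iterated-finite-difference} as an integer combination of the integer values $p(0),\dots,p(i)$. Both are short and valid. The paper's version has the slight advantage of yielding nonnegativity of the binomial coefficients of $x^k$ (which is exactly what $R_+$ cares about), while your version, as you implicitly note, actually establishes the stronger classical fact that any integer-\emph{valued} polynomial has integer binomial coefficients---your last step only uses $p(m)\in\Z$, not that $p\in\Z[x]$.
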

\begin{proof}
This follows immediately from the fact that $R_+(x)\subset\Q[x]$ is closed under multiplication, hence $x^k\in R_+(x)$ is a linear combination of the $\binom{x}{i}$ with nonnegative integral coefficients.
\end{proof}
Now, we provide a useful definition. Recall from Definition \ref{difference-operator-definition} that $\Delta$ is the difference operator acting on the set of polynomials.
\begin{definition}\label{binomial-coefficient-derivative}
For a polynomial $p(x),$ define $p_{nk}=\Delta^n\left(x^kp(x)\right)(0).$ In other words, these are the coefficients for the binomial expansion of $x^kp(x).$ In particular, define $g_{nk}(i)=\binom{x}{i}_{nk}=\Delta^n\left(x^k\binom{x}{i}\right)(0).$
\end{definition}
Note that if we have a polynomial $p(x)=a_0+a_1x+\cdots+a_t\binom{x}{t},$ then \begin{align*}
    &p(x)x^k=p_{0k}+p_{1k}x+\cdots+p_{(k+t)k}\binom{x}{k+t}\\
    =(&a_0g_{0k}(0)+a_1g_{0k}(1)+\cdots+a_tg_{0k}(t))+(a_0g_{1k}(0)+a_1g_{1k}(1)+\cdots+a_tg_{1k}(t))x+\cdots\\ +(&a_0g_{(k+t)k}(0)+a_1g_{(k+t)k}(1)+\cdots+a_tg_{(k+t)k}(t))\binom{x}{k+t},
\end{align*} 
which gives us $p_{ik}=a_0g_{ik}(0)+a_1g_{ik}(1)+\cdots+a_tg_{ik}(t),$ demonstrating the usefulness of the definition.
\begin{lemma}\label{bound-on-next-i}
Suppose $g_{nk}(i)$ are as in Definition \ref{binomial-coefficient-derivative}. Then for any nonnegative integers $n,k$ and positive integer $i,$ $g_{nk}(i)\geq\frac{n-i+1}{i}(g_{nk}(i-1)).$
\end{lemma}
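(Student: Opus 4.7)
The plan is to first derive a closed form for $g_{nk}(i)$ using the Leibniz rule for finite differences (Lemma \ref{product-rule-for-difference-quotient}), and then to rewrite the desired inequality as a simple positivity statement about $\Delta^m(x^k)$ evaluated at a nonnegative integer.

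First I would expand
$$g_{nk}(i) = \Delta^n\!\left(x^k \binom{x}{i}\right)\!(0) = \sum_{j=0}^n \binom{n}{j}\,\Delta^{n-j}(x^k)\big|_{x=j}\,\cdot\,\Delta^j\binom{x}{i}\big|_{x=0}.$$
Since $\Delta^j\binom{x}{i} = \binom{x}{i-j}$ for $j \le i$ and is identically zero for $j > i$, and since $\binom{0}{i-j} = 0$ unless $j = i$, only the single term $j = i$ survives. This gives the clean formula
$$g_{nk}(i) = \binom{n}{i}\,\Delta^{n-i}(x^k)\big|_{x=i} \quad \text{for } i \le n, \qquad g_{nk}(i) = 0 \quad \text{for } i > n.$$

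Next I would use the standard identity $\binom{n}{i} = \tfrac{n-i+1}{i}\binom{n}{i-1}$ to factor the quantity
$$g_{nk}(i) - \tfrac{n-i+1}{i}\,g_{nk}(i-1) = \binom{n}{i}\left[\,\Delta^{n-i}(x^k)\big|_{x=i} - \Delta^{n-i+1}(x^k)\big|_{x=i-1}\right].$$
By the very definition of $\Delta$, the bracket equals $\Delta^{n-i}(x^k)\big|_{x=i-1}$. (The case $i > n$ is handled separately: both sides of the inequality reduce to zero using that $g_{nk}(n) \ge 0$ and the factor $n-i+1$ vanishes at $i = n+1$.)

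Finally, it remains to show $\Delta^{n-i}(x^k)\big|_{x=i-1} \ge 0$, and for this it suffices to prove that $\Delta^m(x^k)$ is a polynomial with nonnegative integer coefficients for every $m, k \ge 0$, since $i - 1 \ge 0$. This is a short induction on $m$: if $p(x) = \sum_r c_r x^r$ has nonnegative coefficients, then
$$\Delta p(x) = \sum_r c_r\bigl((x+1)^r - x^r\bigr) = \sum_r c_r \sum_{s < r} \binom{r}{s} x^s$$
also has nonnegative coefficients, and $x^k$ itself is the base case.

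The only subtle step is setting up the formula for $g_{nk}(i)$ correctly so that the telescoping identity $\Delta^{n-i+1}(x^k)|_{x=i-1} = \Delta^{n-i}(x^k)|_{x=i} - \Delta^{n-i}(x^k)|_{x=i-1}$ makes the factor $\binom{n}{i}$ line up exactly with the factor $\tfrac{n-i+1}{i}\binom{n}{i-1}$; once this is arranged, the inequality reduces to pure positivity of polynomials with nonnegative coefficients at nonnegative inputs, which is elementary.
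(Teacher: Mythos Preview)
Your proof is correct. Both arguments start from the Leibniz rule (Lemma \ref{product-rule-for-difference-quotient}), but with the roles of the two factors swapped. The paper takes $p=\binom{x}{i}$ and $q=x^k$, which leaves a genuine sum
\[
g_{nk}(i)=\frac{n!}{(n-i)!}\sum_{j}\frac{g_{jk}(0)}{(n-j)!\,(i+j-n)!}
\]
and then bounds it term by term via the factorial inequality $\frac{1}{(i+j-n)!}\ge \frac{1}{i\,(i+j-n-1)!}$. Your choice $p=x^k$, $q=\binom{x}{i}$ makes the sum collapse to a single term, giving the closed form $g_{nk}(i)=\binom{n}{i}\,\Delta^{n-i}(x^k)\big|_{x=i}$; the inequality then reduces, via $\binom{n}{i}=\frac{n-i+1}{i}\binom{n}{i-1}$ and the telescoping $\Delta^{n-i+1}f(i-1)=\Delta^{n-i}f(i)-\Delta^{n-i}f(i-1)$, to the nonnegativity of $\Delta^{n-i}(x^k)$ at the nonnegative integer $i-1$. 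Your route is cleaner: the factor $\frac{n-i+1}{i}$ appears exactly rather than through an estimate, and the residual positivity claim (that $\Delta^m(x^k)$ has nonnegative coefficients) is elementary. The paper's version keeps the quantities $g_{jk}(0)$ explicitly in view, which is mildly convenient downstream but not essential.
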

\begin{proof}
Define, for simplicity, $m!=\infty$ (so $\frac{1}{m!}=0)$ for $m<0.$ Then, from Lemma \ref{product-rule-for-difference-quotient}, we immediately obtain \begin{align*}
g_{nk}(i)&=\left(\sum_{j=0}^n\binom{n}{j}\Delta^{n-j}\binom{x+j}{i}\Delta^j(x^k)\right)(0)=\left(\sum_{j=0}^n\binom{n}{j}\binom{x+j}{i+j-n}\Delta^j(x^k)\right)(0)\\
&=\sum_{j=0}^n\binom{n}{j}\binom{j}{i+j-n}g_{jk}(0)=\sum_{j=0}^n\frac{n!}{(n-j)!(i+j-n)!(n-i)!}g_{jk}(0)\\
&=\frac{n!}{(n-i)!}\sum_{j=0}^n\frac{g_{jk}(0)}{(n-j)!(i+j-n)!}\geq\frac{n!}{(n-i)!}\sum_{j=0}^n\frac{g_{jk}(0)}{i(n-j)!(i+j-n-1)!}\\
&=\left(\frac{n-i+1}{i}\right)\frac{n!}{(n-i+1)!}\sum_{j=0}^n\frac{g_{jk}(0)}{(n-j)!(i+j-n-1)!}=\left(\frac{n-i+1}{i}\right)g_{nk}(i-1),
\end{align*}
as desired.
\end{proof}
This gives us an important fact about $p_{nk}.$
\begin{lemma}\label{asymptotic-N}
If the leading coefficient $a_t$ of $p(x)$ is positive, then there exists $N$ such that for all $n\geq N$ and $k\geq0,$ the value $p_{nk}\geq0.$ In particular, for $n>N,$ we have $p_{nk}\geq np_{n(k-1)}.$
\end{lemma}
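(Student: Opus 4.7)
The argument has two parts: first establish $p_{nk}\geq 0$ for $n$ large, then derive the sharper recursion from a Leibniz identity. The key preliminary observation is that $g_{nk}(i)\geq 0$ for every $n,k,i\geq 0$. Indeed, $x=\binom{x}{1}$ lies in $R_+(x)$, and by Lemma \ref{semiring} (whose combinatorial proof applies without change to the indeterminate case) $R_+(x)$ is closed under multiplication, so $x^k\binom{x}{i}\in R_+(x)$ and all its coefficients in the binomial basis are nonnegative. By Definition \ref{binomial-coefficient-derivative} and Lemma \ref{expansion-into-binomial-coefficients}, these coefficients are precisely the $g_{nk}(i)$.

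Next, I would iterate Lemma \ref{bound-on-next-i} in reverse. Rewriting it as $g_{nk}(j-1)\leq\frac{j}{n-j+1}g_{nk}(j)$, which is valid for $j\leq n$ thanks to the nonnegativity just established, and cascading downward from $j=t$ to $j=i+1$ (which requires $n\geq t$) yields
\[
g_{nk}(i)\;\leq\;\frac{t!/i!}{(n-t+1)(n-t+2)\cdots(n-i)}\,g_{nk}(t)\;\leq\;\frac{t!/i!}{(n-t+1)^{t-i}}\,g_{nk}(t)
\]
for $0\leq i\leq t$. Substituting into $p_{nk}=\sum_{i=0}^{t}a_i g_{nk}(i)$ and isolating the $i=t$ term gives
\[
p_{nk}\;\geq\;\Bigl(a_t\;-\;\sum_{i=0}^{t-1}\frac{|a_i|\,t!/i!}{(n-t+1)^{t-i}}\Bigr)\,g_{nk}(t).
\]
Since $a_t>0$ and every correction term has $t-i\geq 1$ and therefore tends to $0$ as $n\to\infty$, one can choose a single $N$ such that the bracketed quantity is nonnegative for all $n\geq N$; combined with $g_{nk}(t)\geq 0$ this yields $p_{nk}\geq 0$ uniformly in $k$.

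For the sharper inequality, I would apply the Leibniz rule (Lemma \ref{product-rule-for-difference-quotient}) to the factorization $x^k p(x)=x\cdot(x^{k-1}p(x))$. Since $\Delta(x)=1$ and $\Delta^j(x)=0$ for $j\geq 2$, only the terms $j=n-1$ and $j=n$ survive in the Leibniz sum, and evaluating at $x=0$ produces the clean recurrence
\[
p_{nk}\;=\;n\,p_{n(k-1)}\;+\;n\,p_{(n-1)(k-1)}.
\]
For $n>N$ we have $n-1\geq N$, so the first part gives $p_{(n-1)(k-1)}\geq 0$, and $p_{nk}\geq n\,p_{n(k-1)}$ follows. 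The main technical obstacle is controlling the sign of $p_{nk}$ when the coefficients $a_i$ for $i<t$ may be negative; the resolution is that $g_{nk}(t)$ quantitatively dominates the lower $g_{nk}(i)$ for large $n$, and crucially the dominating ratios from Lemma \ref{bound-on-next-i} are independent of $k$, which is what allows a single choice of $N$ to work simultaneously for every $k$.
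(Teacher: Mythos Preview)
Your proposal is correct and follows essentially the same approach as the paper: iterate Lemma~\ref{bound-on-next-i} to bound the lower $g_{nk}(i)$ by $g_{nk}(t)$ uniformly in $k$, then obtain the recurrence $p_{nk}=n\bigl(p_{n(k-1)}+p_{(n-1)(k-1)}\bigr)$. The paper derives this recurrence from the identity $x\binom{x}{n-1}=(n-1)\binom{x}{n-1}+n\binom{x}{n}$ rather than the Leibniz rule, but these are equivalent; your explicit justification that $g_{nk}(i)\geq 0$ (needed to reverse the inequality in Lemma~\ref{bound-on-next-i}) is a point the paper leaves implicit.
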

\begin{proof}
We use the formula $p_{nk}=a_0g_{nk}(0)+a_1g_{nk}(1)+\dots+a_tg_{nk}(t).$ Thus, it is sufficient to find $N$ such that for all $n\geq N,$ $$
a_tg_{nk}(t)\geq\sum\limits_{i=0}^{t-1}\abs{a_i}g_{nk}(i),
$$
as then $p_{nk}$ must be at least $0.$ But the $a_i$ are constants and $t,$ the degree of $p(x),$ is as well. We use Lemma \ref{bound-on-next-i} and note that $i\leq t$ and $t$ is constant, to get, for $n\geq2t>i+t-1,$ that $$
g_{nk}(i)\leq\frac{t(t-1)\dots(i+1)}{(n-t+1)(n-t+2)\dots(n-i)}g_{nk}(t)\leq\frac{tg_{nk}(t)}{n-t+1}.
$$
We note that $a_t$ is a positive integer to get that $$
\sum\limits_{i=0}^{t-1}\abs{a_i}g_{nk}(i)\leq\frac{t^2g_{nk}(t)\max_i\abs{a_i}}{n-t+1}\leq a_tg_{nk}(t)
$$
for $n$ large enough, as desired. In particular, since $a_t$ is a positive integer, we can take $N=t^2\max\limits_i\abs{a_i}+2t.$ 

Now, for the second part of the Lemma, note that, for $n>N,$ $n-1\geq N$ so $p_{(n-1)(k-1)}\geq0.$ Now, we note the remarkable identity $x\binom{x}{n-1}=(n-1)\binom{x}{n-1}+n\binom{x}{n}.$ We conclude from this that $$
p_{nk}=n\left(p_{n(k-1)}+p_{(n-1)(k-1)}\right)\geq np_{n(k-1)},
$$
as desired.
\end{proof}
If we define $c_n=p(n),$ we can obtain an alternative formula for $p_{nk}.$ 
\begin{lemma}\label{alt-formula-for-coeffs}
$p_{nk}=c_nn^k-nc_{n-1}(n-1)^k+\binom{n}{2}c_{n-2}(n-2)^k-\cdots+(-1)^nc_00^k,$ where $0^0=1.$
\end{lemma}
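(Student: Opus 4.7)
The plan is to recognize that this is a direct application of the explicit formula for the iterated finite difference operator provided in Lemma \ref{iterated-finite-difference}. Since by Definition \ref{binomial-coefficient-derivative} we have $p_{nk} = \Delta^n(x^k p(x))(0)$, the whole proof is really just substitution plus bookkeeping.

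More concretely, I would set $f(x) = x^k p(x)$ and apply Lemma \ref{iterated-finite-difference}, which gives
$$\Delta^n f(x) = \sum_{i=0}^n (-1)^i \binom{n}{i} f(x+n-i).$$
Evaluating at $x=0$ turns $f(n-i)$ into $(n-i)^k p(n-i) = (n-i)^k c_{n-i}$, yielding
$$p_{nk} = \sum_{i=0}^n (-1)^i \binom{n}{i} (n-i)^k c_{n-i},$$
which is exactly the claimed formula once we expand the sum term by term, noting that the $i=n$ term gives $(-1)^n c_0 \cdot 0^k$ under the convention $0^0 = 1$.

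There is no real obstacle here; the only minor point to flag is the convention $0^0 = 1$ in the final term when $k = 0$, so that the formula remains valid in that edge case (and agrees with the trivial computation $p_{n0} = \Delta^n p(0)$ reconstructed from Lemma \ref{expansion-into-binomial-coefficients}). Thus the lemma is immediate from the existing machinery, and no induction or additional combinatorial identity is needed.
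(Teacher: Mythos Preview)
Your proposal is correct and essentially identical to the paper's own proof: both substitute $p_{nk}=\Delta^n(x^kp(x))(0)$ into Lemma~\ref{iterated-finite-difference} and evaluate at $x=0$ to obtain the stated sum.
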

\begin{proof}
We substitute $p_{nk}=\Delta^n(x^kp(x))(0)$ into Lemma \ref{iterated-finite-difference} to get $$
p_{nk}=\left(\sum\limits_{i=0}^n(-1)^i\binom{n}{i}(x+n-i)^kp(x+n-i)\right)(0)=\sum\limits_{i=0}^n(-1)^i\binom{n}{i}(n-i)^kc_{n-i},
$$
which is exactly the formula in the statement of the lemma.
\end{proof}
From this formula the following asymptotic description follows:
\begin{lemma}\label{asymptotic-behavior}
We have $p_{nk}\sim c_nn^k$ in the sense that, as $k\to\infty,$ the quotient $\frac{p_{nk}}{c_nn^k}\to1.$
\end{lemma}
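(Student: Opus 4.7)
The plan is to apply Lemma \ref{alt-formula-for-coeffs} directly and take the limit termwise as $k \to \infty$. Dividing the identity from that lemma through by $c_n n^k$ (implicitly assuming $n \geq 1$ and $c_n \neq 0$, so that the quotient is well-defined), I obtain
$$
\frac{p_{nk}}{c_n n^k} \;=\; 1 \;+\; \sum_{i=1}^{n}(-1)^i \binom{n}{i}\,\frac{c_{n-i}}{c_n}\left(\frac{n-i}{n}\right)^{\!k}.
$$

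The key observation is that for each fixed $i$ with $1 \leq i \leq n$, the ratio $(n-i)/n$ lies in $[0,1)$, using the convention $0^k = 0$ for $k \geq 1$ to cover the term $i = n$. Consequently $\left(\frac{n-i}{n}\right)^k \to 0$ as $k \to \infty$. Since the coefficients $(-1)^i \binom{n}{i}\, c_{n-i}/c_n$ are constants independent of $k$, each of the finitely many summands on the right tends to $0$, and hence the whole sum tends to $0$. This forces $\frac{p_{nk}}{c_n n^k} \to 1$, which is exactly the claimed asymptotic $p_{nk} \sim c_n n^k$.

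The main obstacle here—if one can call it that—is mostly bookkeeping: verifying the correct interpretation of the statement (namely that $n$ is held fixed while $k \to \infty$, and that one tacitly assumes $c_n \neq 0$ so the ratio is defined) and handling the edge case $n-i = 0$ via the convention $0^k = 0$ for $k \geq 1$. Once Lemma \ref{alt-formula-for-coeffs} is in hand, no further machinery is required; the result follows at once from the fact that the leading term $c_n n^k$ dominates each of the other finitely many geometric terms $c_{n-i}(n-i)^k$ with $i \geq 1$, as the base $n-i$ is strictly less than $n$.
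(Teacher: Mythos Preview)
Your proof is correct and follows essentially the same approach as the paper's: apply Lemma~\ref{alt-formula-for-coeffs} and observe that the leading term $c_n n^k$ dominates each of the finitely many remaining terms since $(n-i)^k/n^k \to 0$ for $i \geq 1$. Your treatment is in fact slightly more careful than the paper's, explicitly noting the assumptions $n\ge 1$, $c_n \neq 0$ and the convention $0^k = 0$ for $k\ge 1$.
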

\begin{proof}
This is trivial by Lemma \ref{alt-formula-for-coeffs} and by noting the fact that $$
n^k=\left(\frac{n}{n-i}\right)^k(n-i)^k\gg(n-i)^k
$$
for a fixed $n,$ $i\leq n,$ and $k\to\infty.$
\end{proof}
We finally have one more important lemma to prove.
\begin{lemma}\label{linear-independence-of-coeffs}
If $p(n)\neq0$ for $0\leq n\leq N,$ then the $N+1$ vectors $\left<p_{0k},p_{1k},\dots,p_{Nk}\right>,$ where $k=0,\dots,N,$ are linearly independent.
\end{lemma}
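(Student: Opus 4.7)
The plan is to form the $(N+1)\times(N+1)$ matrix $M$ with entries $M_{nk}=p_{nk}$ for $0\le n,k\le N$ and show that $\det(M)\ne 0$; then the columns of $M$, which are exactly the vectors in the statement, are linearly independent.

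The key observation is that Lemma \ref{alt-formula-for-coeffs} expresses $p_{nk}$ in a form that factors through a Vandermonde matrix. Explicitly, substituting $j=n-i$ in the formula gives
\[
p_{nk}=\sum_{j=0}^{n}(-1)^{n-j}\binom{n}{j}c_j\,j^k=\sum_{j=0}^{N}A_{nj}V_{jk},
\]
where we set $A_{nj}=(-1)^{n-j}\binom{n}{j}c_j$ (with $\binom{n}{j}=0$ for $j>n$) and $V_{jk}=j^k$ (with the convention $0^0=1$). This is the matrix identity $M=A\cdot V$ in the $(N+1)\times(N+1)$ block.

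Now $A$ is lower triangular with diagonal entries $A_{nn}=c_n=p(n)$, so $\det(A)=\prod_{n=0}^{N}p(n)$, which is nonzero by the hypothesis that $p(n)\ne 0$ for $0\le n\le N$. And $V$ is the standard Vandermonde matrix on the distinct nodes $0,1,\ldots,N$, so $\det(V)=\prod_{0\le j<l\le N}(l-j)\ne 0$. Therefore $\det(M)=\det(A)\det(V)\ne 0$, which gives linear independence of the columns of $M$.

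There is no real obstacle here beyond bookkeeping: the only subtlety is the $0^0=1$ convention needed to make $V$ genuinely Vandermonde (so that its $j=0$ row is $(1,0,0,\ldots,0)$ rather than being undefined), and the index change $j=n-i$ to recognize the Vandermonde structure in Lemma \ref{alt-formula-for-coeffs}. Once those are in place, the factorization $M=AV$ with $A$ triangular and $V$ Vandermonde makes the determinant computation immediate.
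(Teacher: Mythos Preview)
Your proof is correct. The factorization $M=AV$ with $A$ lower triangular (diagonal $c_n$) and $V$ Vandermonde on the nodes $0,1,\dots,N$ is clean and immediately yields $\det(M)=\prod_{n=0}^N c_n\cdot\prod_{n=0}^N n!=\prod_{n=0}^N c_n\,n!$, which the paper mentions as a remark after its proof but does not derive.

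The paper's own proof takes a different route: it argues by contradiction. A nontrivial linear relation among the vectors $\langle p_{0k},\dots,p_{Nk}\rangle$ for $k=0,\dots,N$ yields a nonzero polynomial $q(x)$ of degree at most $N$ such that $p(n)q(n)=0$ for $n=0,1,\dots,N$; since $p(n)\ne 0$, this forces $q$ to vanish at $N+1$ points, contradicting $\deg q\le N$. That argument is shorter and avoids explicit matrix algebra, but your factorization buys the exact value of the determinant with no extra effort, whereas the paper only notes the value and defers the computation (a row-reduction proof appears commented out in the appendix source). Both approaches ultimately rest on the same fact---a polynomial of degree at most $N$ is determined by its values at $N+1$ points---but yours packages it via Vandermonde rather than via roots.
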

\begin{proof}
Say they aren't. Then some nontrivial linear combination of the vectors is equal to zero. But note then that some linear combination of the polynomials $x^kp(x),0\leq k\leq N,$ is zero evaluated at each of $0,1,\dots,N,$ since each such polynomial is equal to $$
p_{0k}+p_{1k}x+p_{2k}\binom{x}{2}+\dots+p_{Nk}\binom{x}{N}+\cdots.
$$
Thus, there exists some polynomial $q(x)$ of degree at most $N$ such that $p(n)q(n)=0$ for $0\leq n\leq N.$ But since $p(n)\neq0,$ this means that $q(n)=0$ for each such $n.$ This implies that $q$ is identically $0,$ contradiction.
\end{proof}
In fact, it is possible to prove by Gaussian elimination that the determinant of the matrix formed by these vectors is equal to $\prod\limits_{n=0}^Nc_nn!,$ which is obviously nonzero.
\begin{corollary}\label{int-combination}
For any vector $\left<v_0,v_1,\dots,v_N\right>\in\Q^n$ (i.e. $v_1,\dots,v_N \in \Q),$ there exists a positive integer $m$ such that $\left<mv_0,mv_1,\dots,mv_N\right>$ can be expressed as an integer linear combination of the vectors from Lemma \ref{linear-independence-of-coeffs}. 
\end{corollary}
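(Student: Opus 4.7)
The plan is to deduce this corollary directly from the linear independence established in Lemma \ref{linear-independence-of-coeffs}, together with a clearing-denominators argument. Since we have $N+1$ vectors in $\Q^{N+1}$ which are linearly independent by that lemma, they automatically form a basis of $\Q^{N+1}$ as a $\Q$-vector space. In particular, every vector of $\Q^{N+1}$ is expressible as a $\Q$-linear combination of them; the remaining issue is just to replace rational coefficients by integer ones.

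Concretely, I would proceed as follows. Given $\vec{v}=\langle v_0,v_1,\dots,v_N\rangle\in\Q^{N+1}$, write
$$
\vec{v}=\sum_{k=0}^N q_k \langle p_{0k},p_{1k},\dots,p_{Nk}\rangle
$$
with uniquely determined rational coefficients $q_k\in\Q$. Choose a positive integer $m$ that is a common denominator for $q_0,q_1,\dots,q_N$, so that each $mq_k$ is an integer. Multiplying through yields
$$
m\vec{v}=\langle mv_0,mv_1,\dots,mv_N\rangle=\sum_{k=0}^N (mq_k)\langle p_{0k},p_{1k},\dots,p_{Nk}\rangle,
$$
which is the desired integer linear combination.

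The only genuine content here is the linear independence of the spanning vectors, which has already been established in Lemma \ref{linear-independence-of-coeffs}. There is no real obstacle; the corollary is essentially a formal consequence of that lemma together with the fact that a $\Q$-basis of $\Q^{N+1}$ spans $\Q^{N+1}$ and that rational combinations can be rescaled to integer combinations.
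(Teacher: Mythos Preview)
Your proof is correct and follows exactly the same approach as the paper: express the given vector as a rational linear combination of the $N+1$ linearly independent vectors, then clear denominators to obtain an integer linear combination of a positive integer multiple of the vector.
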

\begin{proof}
$\left<v_0,v_1,\dots,v_N\right>$ can be expressed as a rational linear combination of those $N+1$ vectors, so we can multiply through by the denominators of the coefficients to obtain the desired result.
\end{proof}
We are now ready to prove the main theorem of this section.
\begin{theorem}\label{multiply-to-get-pos-coeffs}
For any polynomial $p(x)$ with rational coefficients and with $p(n)\neq0$ for all nonnegative integers $n,$ there exists a polynomial $s(x)$ such that $\Delta^n(p(x)s(x))(0)\geq0$ for all $n$ and $p(0)s(0)>0.$
\end{theorem}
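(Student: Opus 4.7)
My strategy is to take $s(x) = s^{(0)}(x) + C \cdot u(x)$, where $s^{(0)}$ is a ``low-order corrector'' making the constant binomial coefficient of $ps$ equal to $1$, and $u(x)$ is scaled by a large positive constant $C$ and designed so that $pu$ has all nonnegative binomial coefficients, with the first $N+1$ of them vanishing. The first $N+1$ binomial coefficients of $ps$ will then come entirely from $ps^{(0)}$, while the higher ones will be dominated by the positive contribution $C \cdot \Delta^n(pu)(0)$.

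Without loss of generality the leading binomial coefficient of $p$ is positive, for otherwise we may replace $p$ with $-p$, find the corresponding $s'$, and take $s=-s'$. Then \Cref{asymptotic-N} yields $N$ with $p_{nk} \ge 0$ for all $n \ge N$ and $k \ge 0$; combined with \Cref{asymptotic-behavior}, which gives $p_{nk} \sim p(n)n^k$, and the hypothesis $p(n) \ne 0$, we conclude $p(n) > 0$ for every $n \ge N$. By \Cref{linear-independence-of-coeffs}, since $p(n) \ne 0$ for $0 \le n \le N$, we can solve (over $\Q$) for polynomials $s^{(0)},s^{(1)},\ldots,s^{(N)}$, each of degree at most $N$, satisfying $\Delta^i(ps^{(j)})(0) = \delta_{ij}$ for $0 \le i,j \le N$. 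Write $\alpha_n^{(j)} := \Delta^n(ps^{(j)})(0)$ for $N < n \le N + \deg p$.

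For a large integer $K$ (to be chosen), set $u(x) := x^K - \sum_{i=1}^N p_{iK}\, s^{(i)}(x)$. A direct computation shows that $\Delta^n(pu)(0)$ equals $0$ for $0 \le n \le N$ (since $p_{0K}=0$ for $K\ge 1$ and $p_{nK}-p_{nK}\delta_{nn}=0$ for $1\le n\le N$), equals $\gamma_n(K) := p_{nK} - \sum_{i=1}^N p_{iK}\, \alpha_n^{(i)}$ for $N < n \le N + \deg p$, equals $p_{nK}$ (nonnegative by \Cref{asymptotic-N}) for $N + \deg p < n \le K + \deg p$, and vanishes beyond.

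The main obstacle is proving $\gamma_n(K) > 0$ for $K$ large, uniformly over the finitely many $n \in \{N+1,\ldots,N+\deg p\}$. Dividing by $n^K$ and applying \Cref{asymptotic-behavior}, one gets $p_{iK}/n^K = (p_{iK}/i^K)\cdot(i/n)^K \to p(i)\cdot 0 = 0$ for each $i < n$, while $p_{nK}/n^K \to p(n) > 0$. Hence $\gamma_n(K)/n^K \to p(n) > 0$, so $\gamma_n(K) > 0$ for $K$ sufficiently large. Fixing such a $K$ and then choosing $C > 0$ large enough that $\alpha_n^{(0)} + C\gamma_n(K) \ge 0$ for every $n$ in the critical range, the polynomial $s(x) := s^{(0)}(x) + C\cdot u(x)$ satisfies $p(0)s(0) = \Delta^0(ps)(0) = 1 > 0$ and $\Delta^n(ps)(0) \ge 0$ for all $n$, as required.
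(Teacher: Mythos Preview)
Your proof is correct and follows essentially the same strategy as the paper: reduce via \Cref{asymptotic-N} to finitely many problematic indices, use the linear independence from \Cref{linear-independence-of-coeffs} to build low-degree correctors, and then add a large monomial $x^K$ whose binomial coefficients in the critical range are controlled by \Cref{asymptotic-behavior}.

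The organization differs slightly. The paper chooses a single corrector $q$ making the first $N+3$ binomial coefficients of $pq$ equal to prescribed positive values, then takes $s=q(N+1)^{K-\mathcal K}+x^K$ and argues via explicit growth bounds that the scaled $q$-part dominates the possibly negative low-order contributions of $x^K$. You instead build the full ``dual basis'' $s^{(0)},\dots,s^{(N)}$ and subtract off the low-order coefficients of $x^K$ exactly, so that $\Delta^n(pu)(0)=0$ for $n\le N$; this cleanly separates the constant-term contribution $s^{(0)}$ from the bulk $Cu$ and replaces the paper's $(N+1)^{K-\mathcal K}$ bookkeeping with the single limit $\gamma_n(K)/n^K\to p(n)>0$. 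Both arguments rely on the same three lemmas; yours is a somewhat more streamlined packaging of the same idea.
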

\begin{proof}
Say $s(x)=b_0+b_1x+\dots+b_jx^j.$ Then note that $$
\Delta^n(p(x)s(x))(0)=b_0p_{n0}+b_1p_{n1}+\dots+b_jp_{nj}.
$$
Hence, it suffices to find a linear combination of the $p_{nk}$ that is positive for $n=0$ and nonnegative for any $n>0.$

Assume $p(x)=a_t\binom{x}{t}+a_{t-1}\binom{x}{t-1}+\dots+a_1x+a_0,$ and say $a_t>0$ (otherwise multiply by $-1).$ Fix $N$ to satisfy Lemma \ref{asymptotic-N}. By Lemma \ref{asymptotic-behavior}, for any choice of real numbers $c_{n_-},c_{n_+}$ such that $c_{n_-}<\abs{c_n}<c_{n_+},$ for large enough $K_n(c_{n_-},c_{n_+}),$ for any $k>K_n(c_{n_-},c_{n_+})$ we have $c_{n_-}n^k<\abs{p_{nk}}<c_{n_+}n^k.$ Now, for every $n\leq N,$ choose $c_{n_-},c_{n_+}$ with $c_{n_-}<\abs{c_n}<c_{n_+}$ such that $\frac{c_{n_+}}{c_{n_-}}<\frac{N+1}{N}$; this is possible because $\frac{N+1}{N}>1.$ For example, take $c_{n_+}=\frac{2N+1}{2N}\abs{c_n}$ and $c_{n_-}=\frac{4N+2}{4N+3}\abs{c_n}.$ Denote $$
\KK=\max(\max\limits_{n\leq N}K_n(c_{n_-},c_{n_+}),N+2)+1.
$$
Then we notice that for any $k>\KK-1$ and $n\leq N,$ we have $c_{n_-}n^k<\abs{p_{nk}}<c_{n_+}n^k.$ We divide by the previous inequality to get, for any $k>\KK$ and $n\leq N,$ that $\abs{\frac{p_{nk}}{p_{n(k-1)}}}<\frac{c_{n_+}}{c_{n_-}}n,$ and we rearrange to get $$
\abs{p_{nk}}<\abs{p_{n(k-1)}}\frac{N+1}{N}n\leq\abs{p_{n(k-1)}}(N+1).
$$
Thus, for any $k>\KK$ and $n\leq N,$ we have \begin{equation}\label{eqn:upper-bound}
    \abs{p_{nk}}<\abs{p_{n\KK}}(N+1)^{k-\KK}.
\end{equation}
By Lemma \ref{linear-independence-of-coeffs}, there exists a linear combination of the $\left<p_{nk}\right>$ for $0\leq k\leq N+2$ to get a vector $\left<r_n\right>$ with $r_0=m$ and $r_n=m\abs{p_{n\KK}}$ for $0<n\leq N+2.$ If $q_l$ are coefficients of this linear combination, i.e. we have  $\sum_lq_l\overline{p_l}=\overline{r}$ for $\overline{p_l}= \left<p_{0l},\dots,p_{(N+2)l}\right>$ and $\overline{r}=\left<r_0,\dots,r_{N+2}\right>$, then $q(x)=\sum_lq_lx^l.$ This therefore gives us a polynomial $q(x)$ of degree at most $N+2$ such that the first $N+3$ coefficients in the binomial expansion of $r(x)=p(x)q(x)$ are positive, with the constant term equal to $m$ and the other $N+2$ coefficients equal to $m\abs{p_{n\KK}}.$ This almost gives us what we want! The polynomial $p(x)$ has degree $t,$ so the polynomials $p(x)x^k$ for $k\leq N+2$ have degree at most $N+2+t.$ Thus, only the $t$ terms of highest degree may possibly have negative coefficients. We now resolve this issue.

Say $R=\max\limits_{N+2<n\leq N+t+2}\abs{r_n}.$ Note that $R$ is a constant. Now, for $N+2<n\leq N+t+2,$ $p_{nk}$ is nonnegative and $$
p_{nk}\geq np_{n(k-1)}>(N+2)p_{n(k-1)}>(N+2)^{k-\KK}p_{n\KK}=\left(\frac{N+2}{N+1}\right)^{k-\KK}(N+1)^{k-\KK}p_{n\KK}
$$
for $k\geq\KK.$ Note that $p_{n\KK}$ is nonzero because $\KK>N+2$ and $p_{n(n-t)}$ is nonzero, so $p_{nk}>0$ for any $k\geq n-t$ (which is less than or equal to $N+2),$ so this inequality makes sense. Now, it is important to note that $N$ is constant, therefore $\frac{N+2}{N+1}$ is a fixed constant greater than $1.$ Thus, as $k\to\infty,$ $\left(\frac{N+2}{N+1}\right)^{k-\KK}$ grows arbitrarily large, in particular there exists a $K$ such that $\left(\frac{N+2}{N+1}\right)^{K-\KK}>R.$ We thus have \begin{equation}\label{eqn:lower-bound}
    \abs{p_{nK}}\geq R(N+1)^{K-\KK}p_{n\KK}.
\end{equation}

Now, consider the polynomial $s(x)=q(x)(N+1)^{K-\KK}+x^K.$ We claim that $s(x)$ satisfies the conditions of the theorem. First, $$
p(0)s(0)=p(0)q(0)(N+1)^{K-\KK}=r(0)(N+1)^{K-\KK}=m(N+1)^{K-\KK}>0,
$$
as desired. Now, take an arbitrary $n.$ Then we wish to show that \begin{align*}
\Delta^n(p(x)s(x))(0)&=\Delta^n(p(x)q(x)(N+1)^{K-\KK}+p(x)x^K)(0)\\
&=(N+1)^{K-\KK}\Delta^nr(x)(0)+\Delta^n(p(x)x^K)(0)=(N+1)^{K-\KK}r_n+p_{nK}\geq0.
\end{align*}
Suppose first that $0<n\leq N.$ Then, because $r_n=m\abs{p_{n\KK}}>0,$ it suffices to show that $$
\abs{p_{nK}}\leq(N+1)^{K-\KK}r_n=m(N+1)^{K-\KK}\abs{p_{n\KK}}.
$$
But $m\geq1,$ and we have from Equation (\ref{eqn:upper-bound}) that $\abs{p_{nK}}<\abs{p_{n\KK}}(N+1)^{K-\KK},$ as desired. Now, say $n=N+1$ or $n=N+2.$ Then $r_n=m\abs{p_{n\KK}}>0$ and $p_{nK}\geq0,$ so $$
(N+1)^{K-\KK}r_n+p_{nK}>0,
$$
as desired. Suppose $N+2<n\leq N+t+2.$ Then, by Equation (\ref{eqn:lower-bound}) we have $$
p_{nK}\geq R(N+1)^{K-\KK}p_{n\KK}>0.
$$
It thus suffices to show that $(N+1)^{K-\KK}\abs{r_n}\leq p_{nK},$ but this follows by noting that $R=\max\limits_{N+2<n\leq N+t+2}\abs{r_n}$ and $p_{n\KK}\geq1$ since it is a positive integer, as desired. Finally, say $n>N+t+2.$ Then $r_n=0$ $(r(x)$ has degree at most $N+t+2),$ so $$
(N+1)^{K-\KK}r_n+p_{nK}=p_{nK}\geq0,
$$
as desired.

Thus, in any case, $\Delta^n(p(x)s(x))(0)\geq0$ for all $n$ and $p(0)s(0)>0,$ as desired.
\comment{
this was an early ugly draft of the proof
\todo{REVISE THIS PART A LOT}
By Lemma \ref{linear-independence-of-coeffs}, there exists a linear combination of the $p_{nk}$ for $0\leq k\leq N+1$ to get a sequence $r_n$ with $r_n=mp_{n\k}$ for $0\leq n\leq N.$ Now, note that $p(x)x^k$ has degree $k+t\leq N+t.$ Hence, the polynomial $r(x)$ formed by the coefficients $r_n$ has degree at most $N+t.$ Say $\a=\max\limits_{N<n\leq N+t}\abs{r_n}.$ Now, note that, for any $N+1<n\leq N+t,$ $p_{nk}\geq np_{n(k-1)}\geq(N+1)^{k-n-t}p_{n(n+t)}=\frac{(n+t)!}{t!}a_t(N+1)^{k-n-t}=\frac{(n+t)!}{t!}a_t(N+0.5)^{k-n-t}\left(\frac{N+1}{N+0.5}\right)^{k-n-t}\geq\frac{(n+t)!}{t!}a_t(N+0.5)^{k-n-t}\left(\frac{N+1}{N+0.5}\right)^{k-N-1-t}.$ Obviously, $N,t$ are constants independent of $k,$ so we can fix a sufficiently large $K$ so that $\a\leq\frac{(n+t)!}{t!}a_t\left(\frac{N+1}{N+0.5}\right)^{K-N-1-t}.$ Thus, we have $p_{nK}\geq\a(N+0.5)^{K-n-t}\geq\abs{r_n}(N+0.5)^{K-n-t}\geq\abs{r_n}(N+0.5)^{K-N-1-2t}$ for $N+1<n\leq N+t.$ On the other hand, for $n\leq N,$ we have $p_{nK}<p_{n\k}(N+0.5)^{K-\k}=\frac{r_n}{m}(N+0.5)^{K-\k}\leq r_n(N+0.5)^{K-\k}<r_n(N+0.5)^{K-N-1-2t}.$ Thus, we have $p_{nK}+r_n(N-0.5)^{K-N-2t}$ is greater than $0$ for $n\leq N$ (in particular, $n=0),$ and is nonnegative for $N+1<n\leq N+t.$ But for $n=N+1,$ both terms are positive so this is obviously true, and for any $n>N+t,$ $r_n=0$ and $p_{nK}\geq0$ obviously. Thus, $p_{nK}+r_n(N-0.5)^{K-N-2t}$ is nonnegative for all $n,$ and positive for $n=0,$ and this completes the proof.
}
\end{proof}
\begin{corollary}\label{its-a-ring}
$-1\in R_+(\a)$ for any non-nonnegative integral algebraic number $\a.$ Hence, $R_+(\a)$ is a ring.
\end{corollary}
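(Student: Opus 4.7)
The plan is to apply Theorem \ref{multiply-to-get-pos-coeffs} with $p(x)$ equal to the minimal polynomial of $\a$ over $\Q$, and then to read off a nonnegative integer linear combination of binomial coefficients $\binom{\a}{i}$ that evaluates to a negative integer.

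First, I would check that $p(x) \in \Q[x]$ satisfies the hypothesis $p(n) \neq 0$ for all $n \in \Z_{\geq 0}$. If $\a$ is irrational, this is automatic since the minimal polynomial of an irrational algebraic number has no rational roots. If $\a$ is rational, then $p(x) = x - \a$, whose only root is $\a$ itself; by hypothesis $\a$ is not a nonnegative integer, so $p(n) \neq 0$ for every $n \in \Z_{\geq 0}$. Hence the hypothesis of the theorem holds in every case covered by the corollary.

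Next, apply Theorem \ref{multiply-to-get-pos-coeffs} to obtain a polynomial $s(x)$ such that the product $r(x) := p(x)s(x)$ satisfies $\Delta^n r(0) \geq 0$ for all $n \geq 1$ and $r(0) = p(0)s(0) > 0$. By Lemma \ref{expansion-into-binomial-coefficients}, expanding in the binomial basis gives
\[
r(x) = \sum_{i=0}^{N} r_i \binom{x}{i}, \qquad r_i := \Delta^i r(0) \in \Q,
\]
with $r_0 > 0$ and $r_i \geq 0$ for $i \geq 1$. Choosing a positive integer $M$ that clears the denominators of the finitely many $r_i$ produces nonnegative integer coefficients $Mr_i$ with $Mr_0 \in \Z_{\geq 1}$.

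Finally, evaluate at $x = \a$. Since $p(\a) = 0$, we have $r(\a) = 0$, hence
\[
-Mr_0 \;=\; \sum_{i=1}^{N} (Mr_i)\binom{\a}{i} \;\in\; R_+(\a).
\]
Adding $Mr_0 - 1$ copies of $1 \in R_+(\a)$ yields $-1 \in R_+(\a)$, and Lemma \ref{negative-one-ring} immediately upgrades $R_+(\a)$ from a semiring to a ring. The genuine difficulty of the corollary is entirely absorbed into Theorem \ref{multiply-to-get-pos-coeffs}; the steps above amount to a routine specialization and a clearing-of-denominators argument, with the only subtle point being the verification that the minimal polynomial avoids nonnegative integer roots.
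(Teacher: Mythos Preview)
Your argument is correct and follows essentially the same route as the paper: apply Theorem~\ref{multiply-to-get-pos-coeffs} to the minimal polynomial of $\a$, expand the resulting product in the binomial basis, evaluate at $\a$ to obtain a negative integer in $R_+(\a)$, and then shift to $-1$. The only difference is that you explicitly clear denominators with a positive integer $M$ to guarantee the binomial coefficients are nonnegative \emph{integers}, whereas the paper asserts directly that the $a_i$ are nonnegative integers (implicitly taking the primitive integer minimal polynomial, or relying on the construction inside Theorem~\ref{multiply-to-get-pos-coeffs} to produce integer output); your version is arguably a bit more careful on this point.
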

\begin{proof}
Let $p(x)$ be the minimal polynomial of $s.$ Note that $p(n)\neq0$ for any nonnegative integer $n$ because otherwise $p(x)$ is divisible by $(x-n)$ and hence is not irreducible. Thus, by Theorem \ref{multiply-to-get-pos-coeffs}, there exists a polynomial $q(x)$ such that $\Delta^n(p(x)q(x))\geq0$ for all $n$ and $p(0)q(0)>0.$ But then note that the binomial expansion of this product $$
r(x)=p(x)q(x)=a_t\binom{x}{t}+a_{t-1}\binom{x}{t-1}+\cdots+a_1x+a_0
$$
has $a_0=p(0)q(0)>0,a_1,\dots,a_t\geq0.$ But note that $r(\a)=p(\a)q(\a)=0.$ Thus, we have $$
-a_0=a_t\binom{\a}{t}+a_{t-1}\binom{\a}{t-1}+\cdots+a_1\a\in R_+(\a)
$$
because the $a_i$ are nonnegative integers. But then we have $(a_0-1)-a_0=-1\in R_+(\a),$ as desired.
\end{proof}
\vspace{-.3in}
\section{Description of \texorpdfstring{$R_+(\alpha)$}{} in terms of valuations}\label{General-Section}
In this section we determine an explicit formula for $R_+(\a),$ for any algebraic number $\a,$ in terms of the valuations with respect to the prime ideals of $\OO_{\Q(\a)}$.


\subsection{Integral elements of \texorpdfstring{$R_+(\a)$}{}}\label{section-integral-elements}
Since each $\binom{\alpha}{k}$ lies in $\Q(\a)$ we know that $R_+(\a)\subset\Q(\a)$. First, we would like to focus on specifying which {\it algebraic integers} in $\Q(\a)$ also lie in $R_+(\a).$ Specifically, an algebraic integer is any number that is a root of a monic polynomial with integral coefficients. The integers of a field form a ring, called the {\it ring of integers} $\OO_{\Q(\a)}.$

A very important concept for general $\a$ is the {\it norm}, defined for an element $\b\in\Q(\a)$ as $$
N_\a(\b)=\det(\m_\b),
$$
where $\m_\b$ is the $\Q$-linear transformation of $\Q(\a)$ defined by multiplication by $\b,$ i.e. $x\mapsto\b x.$ Because the determinant does not change under a change of basis and is multiplicative, the norm is well-defined and multiplicative. In addition, the norm of an element $\b$ is a perfect power of the product of the conjugates of $\b,$ which is also the constant term of the minimal polynomial of $\b.$ Specifically, the power is $\left[\Q[\a]:\Q[\b]\right].$ The following is an important property of the norm of an element $\b\in R_+(\a).$
\begin{lemma}\label{norm-over-element-in}
If $\b$ is a nonzero algebraic integer and $\b\in R_+(\a),$ then $\frac{N_\a(\b)}{\b}\in R_+(\a).$
\end{lemma}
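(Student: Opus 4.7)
The plan is to reduce the problem to showing that $N_\a(\b)/\b$ lies in $R_+(\b)$, and then invoke \Cref{binomial-coeffs-of-binomial-coeffs} (the combinatorial form of the intro lemma) to transport this inclusion into $R_+(\a)$. The hypothesis $\b\in R_+(\a)$ is exactly what is needed to activate that lemma, so the real content is to produce $N_\a(\b)/\b$ as an element of $\mathbb Z[\b]$, which will automatically sit inside $R_+(\b)$.

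Before anything else, I would dispatch the degenerate case in which $\b$ is a nonnegative integer (so that \Cref{its-a-ring} does not apply). Here $N_\a(\b)=\b^{[\Q(\a):\Q]}$, hence $N_\a(\b)/\b=\b^{[\Q(\a):\Q]-1}\in\Z_{\geq 0}\subseteq R_+(\a)$, and we are done. In the remaining case, \Cref{its-a-ring} tells us that $R_+(\b)$ is a ring; since it contains both $1$ and $\b$, it contains all of $\Z[\b]$. Thus it suffices to prove $N_\a(\b)/\b\in\Z[\b]$ and then quote \Cref{binomial-coeffs-of-binomial-coeffs}.

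The key computation is to extract $c_0/\b$ from the minimal polynomial. Let $m_\b(y)=y^n+c_{n-1}y^{n-1}+\cdots+c_1y+c_0\in\Z[y]$ be the minimal polynomial of $\b$ over $\Q$ (integrality of the $c_i$ follows from $\b$ being an algebraic integer). Evaluating at $\b$ and isolating $c_0$ gives
\[
c_0/\b \;=\; -\bigl(\b^{n-1}+c_{n-1}\b^{n-2}+\cdots+c_1\bigr)\;\in\;\Z[\b].
\]
As recorded in the paragraph introducing the lemma, $N_{\Q(\b)}(\b)=(-1)^n c_0$ and $N_\a(\b)=N_{\Q(\b)}(\b)^k$ where $k=[\Q(\a):\Q(\b)]$. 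Therefore
\[
\frac{N_\a(\b)}{\b}\;=\;N_{\Q(\b)}(\b)^{\,k-1}\cdot\frac{N_{\Q(\b)}(\b)}{\b}\;=\;\pm\,c_0^{\,k-1}\cdot\frac{c_0}{\b},
\]
which lies in $\Z[\b]$ since $c_0\in\Z$ and $c_0/\b\in\Z[\b]$. Combining this with the inclusions $\Z[\b]\subseteq R_+(\b)\subseteq R_+(\a)$ from the previous paragraph finishes the proof.

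I do not foresee a serious obstacle: once the tower formula for the norm is invoked, the entire statement reduces to an elementary manipulation of the minimal polynomial of $\b$ plus a single application of \Cref{binomial-coeffs-of-binomial-coeffs}. The only nuance worth flagging is the bifurcation at the start — the case where $\b$ is itself a nonnegative integer has to be handled by hand because \Cref{its-a-ring} does not produce a ring structure on $R_+(\b)$ there — but this case is trivial.
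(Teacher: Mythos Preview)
Your proof is correct and follows essentially the same computation as the paper: both extract $c_0/\b$ from the minimal polynomial to show $N_\a(\b)/\b\in\Z[\b]$. The only difference is that you route the inclusion $\Z[\b]\subset R_+(\a)$ through $R_+(\b)$ and \Cref{binomial-coeffs-of-binomial-coeffs}, whereas the paper uses directly that $R_+(\a)$ is already a ring containing $\b$; your separate treatment of $\b\in\Z_{\geq0}$ is therefore unnecessary in the paper's setup but not incorrect.
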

\begin{proof}
Say the minimal polynomial of $\b$ is $m_\b(x)=x^n+a_{n-1}x^{n-1}+\dots+a_1x+a_0,$ where $a_i\in\Z$ for $i=0,1,\dots,n-1.$ Note that $\b$ is a root, hence $\b^n+a_{n-1}\b^{n-1}+\dots+a_1\b+a_0=0.$ Thus, $a_0=b_0\b=-(\b^n+a_{n-1}\b^{n-1}+\dots+a_1\b).$ We divide by $\b$ to get $$
b_0=-(\b^{n-1}+a_{n-1}\b^{n-2}+\dots+a_1)\in\Z[\b].
$$
But we have, for some positive integer power $k,$ and up to multiplication by $-1$ (which is possible, because we are in a ring), that $$
\frac{N_\a(\b)}{\b}=\frac{a_0^k}{\b}=b_0(a_0^{k-1})\in\Z[\b]\subset R_+(\alpha),
$$
as desired.
\end{proof}
We use this lemma to show that all elements in the ring of integers belong to $R_+(\a)$, but first we need to prove another lemma.
\begin{lemma}\label{mult-of-alg-int}
If $\b\in\OO_{\Q(\a)},$ then there exists a positive integer $\d$ such that $\d\b\in R_+(\a).$
\end{lemma}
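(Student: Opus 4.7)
The plan is to exploit the fact that $R_+(\a)$ has already been shown to be a ring containing $\a$ (assuming, as is implicit here, that $\a$ is not a nonnegative integer). This reduces the problem to a purely algebraic observation about the structure of $\mathcal{O}_{\Q(\a)}$ inside $\Q(\a)$.

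First, I would observe that any element $\b \in \mathcal{O}_{\Q(\a)} \subset \Q(\a)$ can, by definition of the field $\Q(\a)$, be expressed as a polynomial in $\a$ with rational coefficients: there exist $q_0, q_1, \dots, q_{n-1} \in \Q$ (where $n = [\Q(\a):\Q]$) with
\[
\b = q_0 + q_1 \a + q_2 \a^2 + \dots + q_{n-1}\a^{n-1}.
\]
Next, I would let $\d$ be any common multiple of the denominators of the $q_i$, say $\d = \mathrm{lcm}(\text{denominators})$. Clearing denominators gives $\d\b \in \Z[\a]$, since each $\d q_i \in \Z$.

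Finally, I would invoke Corollary \ref{its-a-ring}: since $\a$ is algebraic and not a nonnegative integer, $R_+(\a)$ is a ring. It contains $1$ (as an element of $R_+(\a)$ by definition) and $\a = \binom{\a}{1}$, so it contains the subring $\Z[\a]$. Therefore $\d\b \in \Z[\a] \subseteq R_+(\a)$, as required.

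There is really no serious obstacle; the content of the lemma is just that integrality in $\mathcal{O}_{\Q(\a)}$ is a finer condition than lying in $\Q(\a)$, and lying in $\Q(\a)$ is already enough to be captured (up to clearing a rational denominator) by $R_+(\a)$ once we know $R_+(\a)$ is a ring. The only point worth flagging is that the statement must tacitly exclude the nonnegative-integer case, where $R_+(\a) = \Z_{\ge 0}$ cannot absorb negative multiples; otherwise the very short argument above is complete.
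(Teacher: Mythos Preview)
Your proof is correct and follows essentially the same route as the paper: use Corollary~\ref{its-a-ring} to conclude $\Z[\a]\subset R_+(\a)$, write $\b\in\Q(\a)$ as a rational polynomial in $\a$, and clear denominators. Your remark that the nonnegative-integer case must be excluded is also apt and matches the paper's standing assumption in this section.
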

\begin{proof}
By Corollary \ref{its-a-ring} of Theorem \ref{multiply-to-get-pos-coeffs}, $R_+(\a)$ is a ring, hence $\Z[\a]\subset R_+(\a).$ Now, suppose $\b=a_0+a_1\a+\dots+a_k\a^k.$ Then the $a_i$ are rational, hence we can multiply through by their common denominator $m$ to get, with $b_i\in\Z,$ $m\b=b_0+b_1\a+\dots+b_k\a^k\in\Z[\a],$ as desired.
\end{proof}
Now we are ready to prove the main result of this subsection. 
\begin{prop}\label{ring-of-integers}
For all algebraic numbers $\a\not\in\Z_{\geq0},$ the ring of integers $\OO_{\Q(\a)}\subset R_+(\a).$
\end{prop}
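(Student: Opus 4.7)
My plan is to induct on $n = [\Q(\a):\Q]$. The base case $n = 1$ is immediate: $\a$ is a non-nonneg-integer rational, $\OO_{\Q(\a)} = \Z$, and since $R_+(\a)$ is a ring containing $1$ by Corollary~\ref{its-a-ring}, we have $\Z \subset R_+(\a)$.

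For the inductive step, fix $\b \in \OO_{\Q(\a)}$; we may assume $\b \notin \Z$, since otherwise $\b \in \Z \subset R_+(\a)$ trivially. By Lemma~\ref{mult-of-alg-int} there is a positive integer $\d$ with $\d\b \in R_+(\a)$, so by Lemma~\ref{binomial-coeffs-of-binomial-coeffs} we have $R_+(\d\b) \subset R_+(\a)$. The element $\d\b$ is an algebraic integer that is not a nonnegative integer, so the proposition makes sense with $\d\b$ in place of $\a$. If $[\Q(\b):\Q] < n$, then $[\Q(\d\b):\Q] < n$ and the inductive hypothesis applied to $\d\b$ gives $\OO_{\Q(\d\b)} \subset R_+(\d\b) \subset R_+(\a)$; since $\b \in \OO_{\Q(\a)} \cap \Q(\b) = \OO_{\Q(\b)} = \OO_{\Q(\d\b)}$, we conclude $\b \in R_+(\a)$.

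The harder case is when $\b$ generates $\Q(\a)$ over $\Q$, so no reduction in degree is available. Here the strategy is to produce several integer multiples $c_i \b \in R_+(\a)$ whose coefficients $c_i$ have greatest common divisor equal to $1$: since the set $\{m\in\Z : m\b\in R_+(\a)\}$ is an ideal of $\Z$ (by the ring structure of $R_+(\a)$), this will force $\b \in R_+(\a)$. Applying Lemma~\ref{norm-over-element-in} to $\d\b \in R_+(\a) \cap \OO_{\Q(\a)}$ yields $\d^{n-1} N_\a(\b)/\b \in R_+(\a)$, and the auxiliary element $N_\a(\b)/\b$ lies in $\OO_{\Q(\a)}$. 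When $N_\a(\b)/\b$ generates a proper subfield of $\Q(\a)$, the easy case of the previous paragraph (applied to this auxiliary element) gives $N_\a(\b)/\b \in R_+(\a)$, and a second application of Lemma~\ref{norm-over-element-in} produces $N_\a(\b)^{n-2}\b \in R_+(\a)$, contributing the prime factors of $N_\a(\b)$ to the collection. Varying the construction by replacing $\b$ with $\b+k$ for various $k \in \Z$ (legitimate since $\Z \subset R_+(\a)$) produces analogous multiples involving $N_\a(\b+k)$; for any prime $p \mid \d$ one can find $k$ with $p \nmid N_\a(\b+k)$, since $N_\a(\b+k)$ is a nonzero monic integer polynomial in $k$.

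The main obstacle is the fully degenerate sub-case in which the auxiliary element $N_\a(\b+k)/(\b+k)$ also generates $\Q(\a)$ for every relevant $k$, so that the reduction in the preceding paragraph never triggers. To handle this one must invoke the richer content of Lemma~\ref{binomial-coeffs-of-binomial-coeffs}: every $\binom{\d\b}{k}$ lies in $R_+(\d\b) \subset R_+(\a)$, and expanding these binomials explicitly in terms of $\b$ yields further integer multiples of $\b$ (after cancellation of lower-order terms) whose coefficients carry independent prime information. The combinatorics here mirror the $p$-adic valuation argument used for the rational case in Lemma~\ref{rational-case}, but must now be carried out over $\OO_{\Q(\a)}$ rather than $\Z$.
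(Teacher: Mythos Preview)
Your proposal is incomplete, and you essentially acknowledge this in the final paragraph: the ``fully degenerate sub-case'' is not handled, only gestured at. The induction on the degree $[\Q(\a):\Q]$ is a red herring---the hard case, where $\b$ generates all of $\Q(\a)$, is the only case that matters, and induction gives you nothing there. The vague plan to ``expand binomials explicitly'' and extract ``independent prime information'' is not a proof.

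The paper's argument dispenses with induction entirely and works directly with the minimal $\d$ such that $\d\b\in R_+(\a)$. The key element you are missing is
\[
(p-1)!\binom{\d\b}{p}=\frac{\d\b(\d\b-1)\cdots(\d\b-p+1)}{p}\in R_+(\d\b)
\]
for a prime $p\mid\d$. Since $p\mid\d$, the leading factor $\d\b/p=(\d/p)\b$ is already an algebraic integer, so this element equals $(\d/p)\b\cdot\prod_{i=1}^{p-1}(\d\b-i)$. Now apply Lemma~\ref{norm-over-element-in} to each factor $\d\b-i$ separately: this replaces the product by the integer $\prod_{i=1}^{p-1}N_\b(\d\b-i)$. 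A direct computation using $p\mid\d$ gives $N_\b(\d\b-i)\equiv(-i)^n\not\equiv 0\pmod p$ for $1\le i\le p-1$, so the integer is coprime to $p$. Hence $(\d k/p)\b\in R_+(\d\b)$ with $\gcd(k,p)=1$, and B\'ezout with $\d\b$ yields $(\d/p)\b\in R_+(\d\b)\subset R_+(\a)$, contradicting minimality of $\d$ unless $\d=1$. The point is that the norm trick should be applied to the \emph{cofactors} $\d\b-i$, not to $\d\b$ itself; this is what makes the $p$-adic bookkeeping trivial.
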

\begin{proof}
It is sufficient to show that, for any algebraic integer $\b\in\OO_{\Q(\a)},$ we have $\b\in R_+(\a).$ By Lemma \ref{mult-of-alg-int}, there exists a positive integer $\d$ such that $\d\b\in R_+(\a).$ Consider the smallest such $\d.$ Our goal is to show that $\d=1.$

We show that, if a prime $p$ divides $\d,$ then $(\d/p)\b\in R_+(\d\b)\subset R_+(\a)$ (the last step is true by Lemma \ref{binomial-coeffs-of-binomial-coeffs}), which immediately implies the conclusion because then the smallest $\d$ such that $\d\b\in R_+(\a)$ must not be divisible by any primes, and hence must be equal to $1.$

Say that $\b$ is of degree $n,$ with minimal polynomial $x^n+a_{n-1}x^{n-1}+\cdots+a_1x+a_0,$ with $a_i\in\Z.$ For any prime $p$ dividing $\d,$ consider the element $(p-1)!\binom{\d\b}{p}\in R_+(\d\b).$ We expand, noting that $\d$ is divisible by $p,$ to get that $$
(\d/p)\b(\d\b-1)(\d\b-2)\cdots(\d\b-p+1)\in R_+(\d\b).
$$
Now, we apply Lemma \ref{norm-over-element-in} on each of $\d\b-i$ (these are algebraic integers) to get that $$
(\d/p)\b N_\b(\d\b-1)N_\b(\d\b-2)\cdots N_\b(\d\b-p+1)\in R_+(\d\b).
$$
But we have, where $\b_j$ are the conjugates of $\b,$ \begin{align*}
N_\b(\d\b-i)&=\prod_{j=1}^n(\d\b_j-i)=(-\d)^n\prod_{j=1}^n\left(\frac{i}{\d}-\b_j\right)=(-\d)^nm_\b(i/\d)\\
&=(-\d)^n\left(\left(\frac{i}{\d}\right)^n+a_{n-1}\left(\frac{i}{\d}\right)^{n-1}+\cdots+a_1\left(\frac{i}{\d}\right)+a_0\right)\\
&=(-1)^n\left(i^n+\d a_{n-1}i^{n-1}+\cdots+\d^{n-1}a_1i+\d^na_0\right)\equiv(-i)^n\not\equiv0\pmod p,
\end{align*}
where $m_\b$ is the minimal polynomial of $\b.$ Thus, we have $(\d k/p)\b\in R_+(\d\b)$ where $k$ is an integer not divisible by $p.$ But then $\gcd(\d,\d k/p)=\d/p,$ in particular Bezout's identity implies that there exists a linear combination equal to $\d/p.$ Now, $\d\b,(\d k/p)\b\in R_+(\d\b),$ so their greatest common divisor $(\d/p)\b\in R_+(\d\b)$ as well, as desired.
\end{proof}
By Proposition \ref{ring-of-integers}, $\OO_{\Q(\a)}\subset R_+(\a).$ We then know that for any $\b\in\OO_{\Q(\a)},$ $R_+(\b)\subset R_+(\a).$ If $\b$ has the same degree as $\a,$ then $\OO_{\Q(\b)}=\OO_{\Q(\a)}.$ But then $\a\in\OO_{\Q(\b)}\subset R_+(\b),$ so $R_+(\a)\subset R_+(\b).$ Hence, $R_+(\b)=R_+(\a),$ so \textit{$R_+(\b)$ does not depend on the choice of $\b\in\OO_{\Q(\a)},$} as long as it spans the same ring of integers. This motivates the following more general definition of $R_+$ for a ring:
\begin{definition}
For a ring $S,$ define $R_+(S)$ as the set $$
\bigplus_{s\in S}R_+(s)=\{t_1+t_2+\dots+t_n\mid t_i\in R_+(s_i),s_i\in S\}.
$$
In other words, $R_+(S)$ is the set of nonnegative linear combinations with integral coefficients of binomial coefficients of elements of $S.$
\end{definition}
Note that $S\subset R_+(S).$ It is not obvious whether $R_+(S)$ is a ring, since it is defined as a set of linear combinations of elements of different rings, so it may not be closed under multiplication. However:
\begin{prop}\label{R+-of-ring-is-a-ring}
For any ring $S,$ the set $R_+(S)$ is a ring.
\end{prop}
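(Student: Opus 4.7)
The plan is to reduce the ring structure of $R_+(S)$ to the single-element ring structure supplied by Corollary \ref{its-a-ring}. Closure under addition, together with containment of $0$ and $1$, is immediate from the definition, so I need only verify closure under multiplication and the existence of additive inverses. My central tool will be the following key claim: \emph{for any finite subset $\{s_1, \ldots, s_n\} \subseteq S$, there exists $r \in S$, not a nonnegative integer, such that $s_i \in R_+(r)$ for every $i$.}

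Granting this claim, Lemma \ref{binomial-coeffs-of-binomial-coeffs} yields $R_+(s_i) \subseteq R_+(r)$ for each $i$, so the finitely generated submonoid $\sum_i R_+(s_i)$ sits inside $R_+(r)$. By Corollary \ref{its-a-ring}, $R_+(r)$ is itself a ring, hence products and additive inverses of elements drawn from $\sum_i R_+(s_i)$ remain in $R_+(r) \subseteq R_+(S)$. Since any two elements of $R_+(S)$ together involve only finitely many $s_i$, this gives both desired closure properties.

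For the key claim, I would induct on $n$, reducing to the case $n=2$: given $s, s' \in S$, construct $r \in S$ with $s, s' \in R_+(r)$. The natural candidate is $r = s + N s'$, with $N \in \Z$ selected via the primitive element theorem so that $\Q(r) = \Q(s, s')$; such an $r$ lies in $S$ since $\Z \subseteq S$ and $S$ is a ring. When $s$ and $s'$ are both algebraic integers, Proposition \ref{ring-of-integers} immediately gives $s, s' \in \OO_{\Q(r)} \subseteq R_+(r)$, concluding the easy case.

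The main obstacle will be handling the case in which $s$ or $s'$ fails to be an algebraic integer. Here I plan to rescale: choose positive integers $d, d'$ so that $ds, d's'$ are algebraic integers; these then lie in $\OO_{\Q(r)} \subseteq R_+(r)$, so recovering $s, s'$ inside $R_+(r)$ amounts to showing $1/d, 1/d' \in R_+(r)$. The denominator content of $R_+(r)$ is controlled by the set $\PP_p'$ associated with $r$, so this verification reduces to choosing $N$ (possibly combined with further integer rescalings of $s, s'$) so that the primes in $\PP_p'$ for $r$ are disjoint from those dividing $dd'$. This can either be argued via the explicit valuation-theoretic description of $R_+(r)$ in Theorem \ref{exact-description-for-all-alg-nums-intro}, or by a direct computation of sufficiently many binomial coefficients $\binom{r}{k}$ to exhibit the needed denominators, in the spirit of Lemma \ref{rational-case}.
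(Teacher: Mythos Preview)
Your overall strategy—reducing to a single element $r\in S$ via the primitive element theorem and then invoking Corollary~\ref{its-a-ring}—is sound and close in spirit to the paper's approach. Both arguments ultimately rest on Proposition~\ref{ring-of-integers} (or its refinement Proposition~\ref{alg-num-to-alg-int}) together with Lemma~\ref{binomial-coeffs-of-binomial-coeffs}.

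There is, however, a genuine flaw in your handling of the non-integral case. You write that ``recovering $s,s'$ inside $R_+(r)$ amounts to showing $1/d,1/d'\in R_+(r)$,'' and then aim to arrange that no prime in $\PP_p'(r)$ lies over a divisor of $dd'$. This sufficient condition is in general \emph{not} achievable with $r=s+Ns'$. For instance, take $K=\Q(\sqrt{17})$, where $(2)=\pp_1\pp_2$ splits into two linear primes, and set $s=\bar\omega/2$ with $\omega=(1+\sqrt{17})/2$, $s'=\omega$. Then $d=2$, but $v_{\pp_2}(s)=1\ge 0$ and $v_{\pp_2}(s')\ge 0$, so $v_{\pp_2}(s+Ns')\ge 0$ for every $N$; hence $\pp_2\in\PP_2'(r)$ and $1/2\notin R_+(r)$ regardless of $N$. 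The correct target is the weaker condition $v_\pp(s),v_\pp(s')\ge 0$ for all $\pp\in\PP_p'(r)$, i.e.\ $v_\pp(r)<0$ at every linear prime where $s$ or $s'$ has negative valuation. That \emph{is} achievable by a density argument in each $\Z_p$ combined with CRT, but it is more delicate than what you sketched.

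The paper sidesteps this entirely. Rather than forcing $s,s'\in R_+(r)$, it expands the product via $R_+(r)=R_+(\OO_{\Q(r)})[r]$ into terms $c\cdot t$ with $c\in R_+(\OO_K)$ and $t=r^is^j\in S$, and then for \emph{each term separately} takes $r'=pv+t$ where $v\in S\cap\OO_K$ is an integral primitive element. Because $pv$ is integral, one gets immediately $R_+(pv+t)=R_+(\OO_K)[pv+t]=R_+(\OO_K)[t]\ni ct$, with no valuation bookkeeping required. This ``integral shift'' trick is the cleaner substitute for your denominator-clearing step.
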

We prove this in Lemma \ref{R+-of-ring-is-ring}.

For any $\a\in S$ such that $S\subset R_+(\a),$ we see that $R_+(S)=R_+(\a).$ In particular, for algebraic integers $\a,$ $R_+(\OO_{\Q(\a)})=R_+(\a).$ In general, we have $R_+(\OO_{\Q(\a)}[\a])=R_+(\a).$ 
\subsection{General results on overrings of \texorpdfstring{$\OO_K$}{} }\label{Section-Ideal-Factorization}
In previous sections we proved that $R_+(\a)$ is a ring sandwiched between $\mathcal O_{\mathbb Q(\a)} \subset R_+(\a)\subset \mathbb Q(\a)$. Such objects are called \textbf{overrings} of $\mathcal O_{\mathbb Q(\a)}$.
\begin{definition}
Say $K$ is any field and $S$ is a subring of $K$ containing the ring of integers $\OO_K.$ We call $S$ an overring of $\OO_K.$
\end{definition}
In this section we prove some general results about overrings of $\OO_K$. These results are not new, but we weren't able to find a reference where they would be stated in exactly this way, so we provide our own proofs.

First, recall that in the rational case, an advantage we had was that integers factorize uniquely into products of primes. Unfortunately, in general integral domains, this is not true. For example, in $\Z[\sqrt{-5}],$ the element $6$ factors, on one hand, as $(1+\sqrt{-5})(1-\sqrt{-5}),$ and on the other hand, as $2\cdot3.$ Luckily, there is a form of factorization that works uniquely in rings of integers of number fields, which are Dedekind rings, and that is ideal factorization into prime ideals. Specifically, every ideal (possibly fractional) factorizes uniquely as a product of prime ideals raised to integer powers.\footnote{For a principal ideal $I=(i),$ we have $I^{-1}=(i^{-1}).$ In general for any (possibly non-principal) ideal, we can define $I^{-1}$ as the set of all $k\in K$ such that $Ik\subset\OO_K.$ For example, in $\Q(\sqrt{-5}),$ we have $(1+\sqrt{-5},2)^{-1}=(\frac{1+\sqrt{-5}}{2},1).$} Therefore, in the field $K,$ we can define for a prime ideal $\pp,$ as we did for the rational case, the $\pp$-adic valuation. 
\begin{definition}
For a fixed prime ideal $\pp\subset \OO_K$ and a (possibly fractional) ideal $I\subset K,$ define the $\pp$-adic valuation $v_\pp(I)$ as the exponent of $\pp$ (possibly negative) in the factorization of $I,$ and $v_\pp((0))=\infty.$ For any element $x\in K,$ let $v_\pp(x)=v_\pp((x)),$ the $\pp$-adic valuation of the principal ideal generated by $x.$
\end{definition}
This valuation has many of the same properties as the $p$-adic valuation in the rational numbers. For any two elements $x,y\in K,$ we have $v_\pp(xy)=v_\pp(x)+v_\pp(y)$ and $v_\pp(x+y)\geq\min(v_\pp(x),v_\pp(y)).$ These follow from the facts that the ideal $(xy)=(x)(y)$ and that $(x+y)\subset(x)+(y),$ respectively. Also, $v_\pp(x^{-1})=-v_\pp(x)$ because the ideal $(x^{-1})$ is the inverse of $(x)$ in the group of ideals under multiplication.

In particular, we can now formulate simple conditions for determining, for a fixed algebraic number $\a,$ whether a number $\b\in\Q(\a)$ lies in $R_+(\a)=R_+(\OO_{\Q(\a)}[\a]).$ According to the above, for any prime ideal $\pp,$ if $v_\pp\left(\binom{\alpha}{k}\right)\geq0$ for all $k,$ then $v_\pp(\beta)\geq0$ for any $\beta\in R_+(\alpha)$ (as $\beta$ is an integral linear combination of the $\binom{\alpha}{k}).$ The lemma below is a step towards the converse result.
\begin{lemma}\label{prime-inverse-in}
Consider any number field $K$ and an overring $S$ of $\OO_K.$ For any prime ideal $\pp\subset\OO_K,$ if $v_\pp(x)<0$ for some $x\in S,$ then $\pp^{-1}\subset S.$ 
\end{lemma}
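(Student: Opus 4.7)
My plan is to manufacture an element $x_0 \in S$ that lies in $\pp^{-1}\setminus\OO_K$, and then to exploit the fact that $\pp^{-1}/\OO_K$ is a one-dimensional vector space over the residue field $\OO_K/\pp$, which forces $\pp^{-1} = \OO_K + \OO_K\,x_0 \subset S$.

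To build $x_0$, I start from the given $x \in S$ with $v_\pp(x) = -n \leq -1$ and first clean up its behavior at the other primes. Let $\qq_1,\dots,\qq_k$ be the (finitely many) primes distinct from $\pp$ where $v_{\qq_i}(x) < 0$, and set $e_i = -v_{\qq_i}(x) > 0$. The ideal $\qq_1^{e_1}\cdots\qq_k^{e_k}$ is not contained in $\pp$ (since $\pp$ is prime and distinct from every $\qq_i$), so I can pick $a \in \qq_1^{e_1}\cdots\qq_k^{e_k}$ lying outside $\pp$. Then $a \in \OO_K \subset S$, so $ax \in S$, and by construction $v_\pp(ax) = -n$ while $v_\qq(ax) \geq 0$ for every prime $\qq \neq \pp$. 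Hence $ax$ sits in the fractional ideal $\pp^{-n}$.

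Next I shift the $\pp$-valuation from $-n$ up to exactly $-1$ by multiplying by any $b \in \pp^{n-1}\setminus \pp^n$ (nonempty because $\pp^{n-1} \supsetneq \pp^n$ in the Dedekind domain $\OO_K$). Such $b$ lies in $\OO_K \subset S$ and has $v_\pp(b) = n-1$ with $v_\qq(b)\geq 0$ for all other $\qq$. Setting $x_0 = abx \in S$, I get $v_\pp(x_0) = -1$ and $v_\qq(x_0) \geq 0$ for every $\qq \neq \pp$, so $x_0 \in \pp^{-1}\setminus \OO_K$.

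Finally, the quotient $\pp^{-1}/\OO_K$ is annihilated by $\pp$, and localizing at $\pp$ identifies it with $\pi^{-1}\OO_{K,\pp}/\OO_{K,\pp} \cong \OO_K/\pp$ where $\pi$ is a uniformizer; at any other prime the localization vanishes. So $\pp^{-1}/\OO_K$ is one-dimensional over the residue field $\OO_K/\pp$, and any $x_0 \notin \OO_K$ generates it as an $\OO_K$-module. This yields $\pp^{-1} = \OO_K + \OO_K\,x_0 \subset S$, as required. The only subtle part is the construction of $a$, and this is exactly the kind of bookkeeping that unique factorization of ideals in a Dedekind domain is designed to handle; the rest is routine.
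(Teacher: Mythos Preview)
Your proof is correct and follows essentially the same approach as the paper. Both arguments multiply $x$ by suitable elements of $\OO_K$ to land in $\pp^{-1}$ (the paper does this at the level of ideals, you at the level of a single element), and both then exploit that $\OO_K/\pp$ is a field: the paper by inverting an auxiliary element modulo $\pp$ to write $\pp^{-1}\subset\pp^{-1}(1+p)+\pp^{-1}p\subset S$, you by noting that $\pp^{-1}/\OO_K$ is one-dimensional over the residue field so that any $x_0\in\pp^{-1}\setminus\OO_K$ already generates it; these are two phrasings of the same fact.
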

\begin{proof}
Consider the (fractional) ideal $(x)=x\OO_K\subset S$ and its prime factorization, which includes $\pp^{-1}$ by definition of the $\pp$-adic valuation, i.e. $$
(x) = \pp^{-1}\pp_1^{-\a_1}\pp_2^{-\a_2}\cdots\pp_k^{-\a_k}\qq_1^{\b_1}\qq_2^{\b_2}\cdots\qq_\ell^{\b_\ell},
$$
where $\a_i,\b_j\geq0,$ and $\pp\neq\qq_j$ for any $j.$ Since $\OO_K\subset S,$ any prime ideal $\pp_i\subset S$ as well. In particular, since $S$ is a ring, we can multiply by these ideals to get that $$
\pp^{-1}\qq_1^{\b_1}\qq_2^{\b_2}\cdots\qq_\ell^{\b_\ell}\subset S.
$$
Let $Q=\qq_1^{\b_1}\qq_2^{\b_2}\cdots\qq_\ell^{\b_\ell},$ so that $\pp^{-1}Q\subset S.$ Note that, since $\pp\neq\qq_j$ for any $j,$ the prime factorization of $Q$ must not contain $\pp.$ Hence, $Q$ has some element, say $q,$ with zero $\pp$-adic valuation. But $\pp$ is a prime, hence maximal ideal of $\OO_K.$ Hence, the ring $\OO_K/\pp$ is a field, in particular the image of $q$ is invertible$\pmod\pp,$ say with inverse $q'.$ Thus, $qq'=1+p$ with $p\in\pp.$ But $$
\pp^{-1}qq'\subset\pp^{-1}q\subset\pp^{-1}Q\subset S,
$$
and $\pp^{-1}qq'=\pp^{-1}(1+p)\subset S.$ Since $p\in\pp,$ it has positive $\pp$-adic valuation, so the ideal $\pp^{-1}p\subset\OO_K\subset S.$ We then have $$
\pp^{-1}\subset\pp^{-1}(1+p)+\pp^{-1}p\subset S,
$$
as desired.
\end{proof}
\begin{corollary}\label{ring-in-terms-of-prime-valuation}
Consider any number field $K$ and an overring $S$ of $\OO_K$. Consider the set $\QQ$ of all prime ideals of $\OO_K$ that do not satisfy the condition of Lemma \ref{prime-inverse-in}, i.e. prime ideals $\qq$ such that any element of $S$ has nonnegative $\qq$-adic valuation. Then $S$ is exactly the set of elements $x\in K$ such that $v_\qq(x)\geq0$ for every prime ideal $\qq\in\QQ.$ 
\end{corollary}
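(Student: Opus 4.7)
The plan is to prove the two inclusions separately; the forward inclusion is immediate, and the reverse is where the content lies.

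For the forward direction, I would simply note that if $x \in S$, then by the very definition of $\QQ$ (as the set of primes $\qq$ such that every element of $S$ has nonnegative $\qq$-adic valuation) we have $v_\qq(x) \geq 0$ for every $\qq \in \QQ$. No work needed beyond unpacking the definition.

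For the reverse direction, suppose $x \in K$ satisfies $v_\qq(x) \geq 0$ for all $\qq \in \QQ$. I would factor the fractional principal ideal $(x)\subset K$ into prime powers as
\[
(x) = \prod_{\pp} \pp^{v_\pp(x)} = I \cdot J^{-1},
\]
where $I = \prod_{v_\pp(x)\geq 0} \pp^{v_\pp(x)}$ and $J = \prod_{v_\pp(x) < 0} \pp^{-v_\pp(x)}$ are coprime integral ideals of $\OO_K$. By hypothesis the primes appearing in $J$ all satisfy $v_\pp(x) < 0$, so none of them lie in $\QQ$; equivalently, each such $\pp$ satisfies the hypothesis of Lemma \ref{prime-inverse-in}, giving $\pp^{-1} \subset S$. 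Since $S$ is a ring and $J^{-1}$ is a finite product of such $\pp^{-1}$'s, we conclude $J^{-1} \subset S$. Combining with $I \subset \OO_K \subset S$ and using that $S$ is closed under multiplication and addition, the ideal product $I J^{-1}$ is contained in $S$. Because $x \in x\OO_K = (x) = I J^{-1}$, this yields $x \in S$.

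The main (and really only) subtlety is to justify that $J^{-1} \subset S$ follows from $\pp^{-1} \subset S$ for each prime $\pp \mid J$: this requires noting that, as $S$ is a ring, products and finite sums of elements of the fractional ideals $\pp^{-1}$ remain in $S$, and so the product ideal $\prod \pp^{-c_\pp}$ is contained in $S$. Everything else is bookkeeping with the prime factorization in the Dedekind domain $\OO_K$ and a direct appeal to Lemma \ref{prime-inverse-in}.
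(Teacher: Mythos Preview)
Your proof is correct and follows essentially the same approach as the paper's: both factor the fractional ideal $(x)$ into its positive and negative prime-power parts, use that each prime $\pp$ with $v_\pp(x)<0$ lies outside $\QQ$ and hence has $\pp^{-1}\subset S$ by Lemma~\ref{prime-inverse-in}, and then multiply the pieces inside the ring $S$ to conclude $(x)\subset S$. Your write-up even anticipates the same subtlety the paper handles implicitly, namely that closure of $S$ under products and sums is what lets you pass from $\pp^{-1}\subset S$ to $J^{-1}\subset S$.
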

\begin{proof}
Say $T$ is the set of elements $x\in K$ such that $v_\qq(x)\geq0$ for every prime ideal $\qq\in\QQ.$ Since $v_\qq(y)\geq0$ for any $y\in S$ and any $\qq\in\QQ,$ it must be true that $S\subset T.$ Conversely, take any element $s\in T,$ and consider the ideal factorization of $$
(s)=\qq_1^{\a_1}\qq_2^{\a_2}\cdots\qq_k^{\a_k}\pp_1^{-\b_1}\pp_2^{-\b_2}\cdots\pp_\ell^{-\b_\ell},
$$
with $\a_i,\b_j\geq0.$ By definition of $S,$ none of the $\pp_j$ are in $\QQ.$ Thus, they satisfy the condition of Lemma \ref{prime-inverse-in} so $\pp_j^{-1}\subset S.$ Since $S$ is a ring, $\pp_j^{-\b_j}\subset S$ as well. Additionally, $\qq_i^{\a_i}\subset\OO_K\subset S.$ But then we multiply these ideals to get that $$
\qq_1^{\a_1}\qq_2^{\a_2}\cdots\qq_k^{\a_k}\pp_1^{-\b_1}\pp_2^{-\b_2}\cdots\pp_\ell^{-\b_\ell}\subset S,
$$
so $(s)\subset S,$ so $s\in S.$ Hence, since $s\in T$ was chosen arbitrarily, $T\subset S,$ so $S=T,$ as desired.
\end{proof}

\subsection{Full description of \texorpdfstring{$R_+(\a)$}{} for algebraic integers}

From the results of the previous subsection it follows that to describe $R_+(\a)$ in terms of valuations it suffices to specify the set of prime ideals for which all elements of $R_+(\a)$ have a positive valuation. This is what we do in this subsection. However, we first need to develop a better understanding of the prime ideals in $\OO_K.$

For any number field $K,$ every prime ideal $\pp\subset\OO_K$ may be intersected with $\Z$ to form a prime ideal $p\Z$ of $\Z$ for some prime number $p.$ Thus, $p\in\pp,$ so the ideal $(p)=p\OO_K\subset\pp,$ therefore $\pp\mid(p).$ Consequently, we consider, for each prime $p,$ the prime ideals $\pp_i$ in the factorization of the ideal $(p)\in\OO_{\Q(\a)}.$

For any prime $q\neq p,$ the ideal $(q)$ does not have $\pp_i$ in its prime factorization. This is because otherwise $p,q\in\pp_i,$ so $\gcd(p,q)=1\in\pp_i,$ contradiction. In particular, in the element $\binom{\alpha}{k}=\frac{\alpha(\alpha-1)\cdots(\alpha-k+1)}{k!},$ we can immediately calculate the $\pp$-adic valuation of the denominator: $$v_\pp(k!)=v_\pp(p^{v_p(k!)})=v_p(k!)v_\pp(p)=v_\pp(p)\sum\limits_{n=1}^\infty\floor{\frac{k}{p^n}}.$$
Below we present well-known results regarding the theory of valuations and p-adic numbers, which we use to prove our main results.

Define the \textbf{p-adic rationals} $\Q_p$ as a completion of the rationals under the $p$-adic norm which, evaluated on a rational number $q\in\Q,$ equals $\frac{1}{p^{v_p(q)}}.$ It can be shown that $\Q_p$ is a field. Also, consider the subset of \textbf{p-adic integers} $\Z_p$ that is the completion of the integers $\Z.$ Then $\Z_p$ is an integrally closed ring with field of fractions $\Q_p$ (see \cite{milneANT}, Proposition 6.2). We are interested in the factorization of polynomials in $\Q_p[x].$ As it turns out, there is a surprising correspondence between the prime ideals of a ring of integers $\OO_{\Q(\a)}$ and the factorization of the minimal polynomial $m_\a$ over $\Q_p.$
\begin{prop}[McDonald \cite{primeidealvaluationstatements}, Neukirch \cite{alg-number-theory-neukirch}]\label{correspondence-between-p-adic-fields}
For any prime number $p\in\Z,$ the prime ideals in the factorization of the ideal $(p)\in\OO_{\Q(\a)}$ can be paired in one-to-one correspondence with the irreducible factors of $m_\a$ over $\Q_p[x].$ 
\end{prop}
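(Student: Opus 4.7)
The plan is to decompose the $\Q_p$-algebra $K \otimes_\Q \Q_p$ (where $K = \Q(\a)$) in two different ways and match the factors. Since $K \cong \Q[x]/(m_\a(x))$ as a $\Q$-algebra, base-changing to $\Q_p$ gives
$$K \otimes_\Q \Q_p \;\cong\; \Q_p[x]/(m_\a(x)).$$
Because we are in characteristic zero, $m_\a$ is separable, so its factorization $m_\a = f_1 f_2 \cdots f_r$ into monic irreducibles over $\Q_p$ has pairwise coprime factors. The Chinese Remainder Theorem then yields
$$\Q_p[x]/(m_\a(x)) \;\cong\; \prod_{i=1}^r \Q_p[x]/(f_i(x)),$$
where each $L_i := \Q_p[x]/(f_i(x))$ is a finite field extension of $\Q_p$ of degree $\deg f_i$.

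On the other hand, I would establish the parallel decomposition
$$K \otimes_\Q \Q_p \;\cong\; \prod_{\pp \mid (p)} K_\pp,$$
where $K_\pp$ denotes the completion of $K$ with respect to the $\pp$-adic valuation. The natural $\Q_p$-algebra map sends $\b \otimes \l$ to $(\l \cdot \b)_\pp$ in each component; it is surjective by weak approximation applied to the inequivalent $\pp$-adic absolute values on $K$, and a dimension count using the classical identity $\sum_{\pp \mid (p)} e_\pp f_\pp = [K:\Q]$ shows it is an isomorphism. Equivalently, this can be extracted from the semi-local identification $\OO_K \otimes_\Z \Z_p \cong \prod_{\pp \mid (p)} \widehat{(\OO_K)_\pp}$ coming from the Dedekind structure of $\OO_K$.

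Finally, I would conclude by uniqueness: both displays above exhibit $K \otimes_\Q \Q_p$ as a product of finite field extensions of $\Q_p$, hence as a commutative semisimple $\Q_p$-algebra in two ways. The primitive idempotents of such an algebra are unique, so the simple (field) factors on each side are in canonical bijection. This yields the required bijection $f_i \leftrightarrow \pp_i$ with $L_i \cong K_{\pp_i}$; explicitly, given a root $\a_i$ of $f_i$ in $\overline{\Q_p}$, the prime $\pp_i$ is the contraction to $\OO_K$ of the maximal ideal of the valuation ring of $\Q_p(\a_i)$ under the embedding $\a \mapsto \a_i$. The main obstacle is the second decomposition, which requires either weak approximation or a careful Hensel/idempotent-lifting argument; the CRT step and the uniqueness of the Wedderburn decomposition are routine.
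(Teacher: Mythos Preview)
The paper does not supply its own proof of this proposition; it simply cites it as a known result from Neukirch and McDonald. Your argument---decomposing $K\otimes_\Q\Q_p$ first as $\prod_i \Q_p[x]/(f_i)$ via the CRT and second as $\prod_{\pp\mid p} K_\pp$ via weak approximation and the degree formula, then matching the field factors by uniqueness of primitive idempotents---is exactly the standard proof found in Neukirch (Chapter~II, \S8). So your proposal is correct and coincides with the approach in the cited reference.
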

Note that, since $m_\a$ is a minimal polynomial (hence is irreducible over $\Q),$ it must have a non-zero discriminant. Hence, the irreducible factors of $m_\a$ over $\Q_p$ are unique. 
If $m_\a$ is irreducible in $\Q_p[x],$ then the ideal $(p)$ has only one prime factor, hence is a power of a prime ideal.

Additionally, there exists a correspondence between $\pp$-adic valuations and the $p$-adic valuations in $\Q_p$ and its extensions. 
\begin{prop}[McDonald \cite{primeidealvaluationstatements}, Neukirch \cite{alg-number-theory-neukirch}]\label{v_P-to-v_p}
If the prime ideal $\pp_i\subset(p)$ is associated with the irreducible factor $g_i(x)$ of $m_\a$ in $\Q_p[x],$ then construct the field extension $\Q_p[x]/g_i(x)\simeq \Q_p(y)$ of $\Q_p,$ naturally extending the $p$-adic valuation to this larger field.\footnote{Define the $p$-adic valuation of an element of a field extension $K/\Q_p$ of degree $n$ as the $n$'th root of the $p$-adic valuation of its norm.} Then, for any polynomial $r(x)\in\Q[x],$ the equality $v_{\pp_i}(r(\a))=c\cdot v_p(r(y)),$ where $c=v_{\pp_i}(p),$ holds.
\end{prop}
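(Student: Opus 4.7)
The plan is to leverage the correspondence of Proposition \ref{correspondence-between-p-adic-fields} by working inside the completion $K_{\pp_i}$ of $K = \Q(\a)$ at $\pp_i$. The standard construction underlying that correspondence realizes the completion as $K_{\pp_i} \cong \Q_p[x]/g_i(x) = \Q_p(y)$, and the canonical embedding $K \hookrightarrow K_{\pp_i}$ sends $\a$ to the class of $x$, i.e.\ to the root $y$ of $g_i$. Under this identification, $r(\a) \in K$ corresponds to $r(y) \in \Q_p(y)$ for any $r \in \Q[x]$. So the content of the proposition reduces to comparing two valuations on the single field $K_{\pp_i} = \Q_p(y)$.

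The remaining task is to compare two non-archimedean discrete valuations on the complete field $K_{\pp_i} = \Q_p(y)$: the $\pp_i$-adic valuation $v_{\pp_i}$, and the unique extension $\tilde v_p$ of $v_p$ from $\Q_p$ (given by the norm formula of the footnote, or equivalently by $\tilde v_p(\pi) = 1/e$ for a uniformizer $\pi \in K_{\pp_i}$ with ramification index $e$). Both restrict to positive scalar multiples of $v_p$ on $\Q_p$, and by the standard uniqueness of extensions of non-archimedean valuations to finite extensions of complete discretely valued fields, they must agree up to a positive scalar on all of $K_{\pp_i}$: $v_{\pp_i} = \lambda \cdot \tilde v_p$ for some $\lambda > 0$. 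Evaluating at $p$ fixes $\lambda = v_{\pp_i}(p)/\tilde v_p(p) = c/1 = c$. Combining this with the correspondence $r(\a) \leftrightarrow r(y)$ yields $v_{\pp_i}(r(\a)) = c \cdot v_p(r(y))$, as desired.

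The main subtle point is tracking normalizations consistently across three valuations---$v_p$ on $\Q_p$, its extension to $\Q_p(y)$, and $v_{\pp_i}$ on $K_{\pp_i}$---and verifying that the isomorphism $K_{\pp_i} \cong \Q_p[x]/g_i(x)$ sends $\a$ specifically to $y$ rather than to some $\Q_p$-conjugate root. The first is handled by the evaluation-at-$p$ computation above; the second is built into the Chinese Remainder decomposition $K \otimes_\Q \Q_p \cong \prod_i \Q_p[x]/g_i(x)$ underlying Proposition \ref{correspondence-between-p-adic-fields}, under which the class of $x$ (and hence of $\a$) in the $i$-th factor is by construction the indeterminate $y$. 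No new ingredients beyond this correspondence and the uniqueness of valuation extensions on complete fields (which is standard Hensel/Henselization machinery from Neukirch) are needed.
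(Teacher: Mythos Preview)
Your proposal is correct and follows essentially the same route as the paper: identify the completion $K_{\pp_i}$ with $\Q_p[x]/g_i(x)\cong\Q_p(y)$ (so that $\a\mapsto y$), observe that both $v_{\pp_i}$ and the extended $v_p$ are valuations on this complete field restricting to multiples of $v_p$ on $\Q_p$, invoke uniqueness of the extension, and pin down the scalar by evaluating at $p$. The paper phrases the completion as the inverse limit $\bigl(\varprojlim \OO_K/\pp_i^k\bigr)[1/p]$ rather than $K_{\pp_i}$, but this is the same object, and the normalization step is identical.
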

\begin{proof}
This is a well-known result, but we provide an outline of the proof here.
From Proposition \ref{correspondence-between-p-adic-fields} we know (see the proof of Theorem 2.10 in \cite{primeidealvaluationstatements}) that there is $g_i(x)\in\Q_{p}[x]$ such that there is an isomorphism $\left(\underset{\leftarrow}{\lim}\OO_K /(\pp_i)^k\right)\left[\frac{1}{p}\right]\simeq \Q_p[x]/g_i(x)\simeq\Q_p(y)$, where $\Q_p(y)$ is an extension of the $p$-adic numbers. We know that this extension has a unique valuation $v_p$ which agrees with a standard valuation on $\Q_p$. Also, $v_{\pp_i}$ gives us a valuation on the inverse limit. But notice that $v_{\pp_i}(p)=c$ may not be equal to $1$.

Nevertheless, if we consider a normalized valuation $\frac{v_{\pp_i}}{c}$, we get a valuation on $\Q_p(y)$ that extends a standard valuation on $\Q_p$. Hence by uniqueness of such a valuation, if we denote by $\psi$ a map $$
\psi:\OO_K\rightarrow\left(\underset{\leftarrow}{\lim}\OO_K /(\pp_i)^k\right)\left[\frac{1}{p}\right] \simeq\Q_p[x]/g_i(x),
$$
then we must have $v_p(\psi(\gamma))=\frac{v_{\pp_i}(\g)}{c}$ for any $\g\in\OO_K,$ which is the statement of the proposition.
\end{proof}
This now allows us to calculate, albeit not exactly, the $\pp_i$-adic valuations of $\binom{\a}{k}.$ We split into two cases, when $g_i$ is linear and when it is not.
\begin{lemma}\label{linear-is-not-removed}
If $g_i$ is linear, then $v_{\pp_i}\left(\binom{\a}{k}\right)\geq0$ for all $k.$
\end{lemma}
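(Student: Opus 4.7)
The plan is to reduce the claim to the standard fact that the polynomial $\binom{x}{k}$ sends $p$-adic integers to $p$-adic integers, via the correspondence of Proposition~\ref{v_P-to-v_p}. Since $g_i$ is linear and monic, we can write $g_i(x)=x-y$ for some $y\in\Q_p$, so the field $\Q_p[x]/g_i(x)$ collapses to $\Q_p$ itself and the image of $\a$ under the isomorphism is exactly $y$. Applying Proposition~\ref{v_P-to-v_p} to $r(x)=\binom{x}{k}$ gives
$$
v_{\pp_i}\!\left(\binom{\a}{k}\right)=c\cdot v_p\!\left(\binom{y}{k}\right),
$$
where $c=v_{\pp_i}(p)>0$. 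Hence it suffices to show $v_p\!\left(\binom{y}{k}\right)\geq 0$, i.e.\ $\binom{y}{k}\in\Z_p$.

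First I would verify $y\in\Z_p$. Since we are in the algebraic integer case, $m_\a\in\Z[x]$ is monic, hence lives in $\Z_p[x]$. As $\Z_p$ is integrally closed in its field of fractions $\Q_p$ (equivalently, by Gauss's lemma applied to the discrete valuation ring $\Z_p$), any monic factorization of $m_\a$ in $\Q_p[x]$ already takes place in $\Z_p[x]$. In particular $g_i(x)=x-y\in\Z_p[x]$, so $y\in\Z_p$.

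Second, I would invoke the $p$-adic continuity of binomial coefficients. The polynomial $\binom{x}{k}\in\Q[x]\subset\Q_p[x]$ defines a continuous map $\Q_p\to\Q_p$. On the dense subset $\Z_{\geq 0}\subset\Z_p$ it takes values in $\Z_{\geq 0}\subset\Z_p$, and $\Z_p$ is closed in $\Q_p$. Therefore $\binom{y}{k}\in\Z_p$ for every $y\in\Z_p$, so $v_p\!\left(\binom{y}{k}\right)\geq 0$. Combining with the displayed equality yields $v_{\pp_i}\!\left(\binom{\a}{k}\right)\geq 0$, as required.

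The only subtle point, and the main (mild) obstacle, is the first step: Proposition~\ref{v_P-to-v_p} on its own only places $y$ in $\Q_p$, and for the continuity argument to apply we genuinely need $y\in\Z_p$. This is precisely the content of the Gauss's lemma step, and it is where the hypothesis that $\a$ is an algebraic integer (so that $m_\a$ is monic with integer, hence $\Z_p$, coefficients) enters. The remainder of the argument is entirely formal.
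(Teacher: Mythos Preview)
Your proof is correct. Both your argument and the paper's hinge on the same key observation that $y\in\Z_p$ (the paper writes $z_i$ for your $y$), but you then diverge in how you conclude $v_p\!\left(\binom{y}{k}\right)\geq 0$. The paper proceeds by an explicit Legendre-type count: it bounds the $\pp_i$-valuation of the denominator $k!$ by $\sum_{n\geq 1}\lfloor k/p^n\rfloor$ (using that $e_i=f_i=1$ when $g_i$ is linear, so $v_{\pp_i}(p)=1$), and then shows the numerator $\a(\a-1)\cdots(\a-k+1)$ has at least this valuation by counting, for each $n$, how many of $0,1,\dots,k-1$ are congruent to $z_i\pmod{p^n}$. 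Your approach instead packages this computation into the one-line density/continuity argument that $\binom{x}{k}$ maps $\Z_p$ to $\Z_p$. Your route is slicker and makes the role of the algebraic-integer hypothesis (via Gauss's lemma) very transparent; the paper's route is more hands-on and deliberately mirrors the rational-case proof of Lemma~\ref{rational-case}, which has some expository value in context.
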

\begin{proof}
The denominator of $\binom{\a}{k},$ as calculated above, has $\pp_i$-adic valuation $v_{\pp_i}(p)\sum\limits_{n=1}^\infty\floor{\frac{k}{p^n}}.$ Now, note that $\deg(g_i)=1,$ implying that both $e_i$ - the exponent of $\pp_i$ in the factorization of $(p)$ - and the inertial degree of $\pp_i$ (the $p$-adic valuation of the cardinality of $\OO_K/\pp_i)$ are equal to $1.$ Thus, the denominator has valuation $\sum\limits_{n=1}^\infty\floor{\frac{k}{p^n}}.$ It suffices to show that the numerator has at least this valuation.

Say $g_i=x-z_i,$ so $z_i$ is a root of $m_\a$ in $\Z_p.$ Note that, using the notation of Proposition \ref{v_P-to-v_p}, $y=z_i\in\Q_p,$ i.e. in this case we deal with $\Q_p$ itself and not an extension. Hence, we have $v_{\pp_i}(\a-s)=v_p(z_i-s).$ But note that $z_i$ is a $p$-adic integer, hence we can consider its value modulo $p^n.$ In particular, for any $s\equiv z_i\pmod{p^n},$ we have that $z_i-s$ is divisible by $p^n$ so $v_p(z_i-s)\geq n.$ Hence, $v_{\pp_i}(\a-s)\geq n.$ As before, among the integers $0,1,\dots,k-1$ there are at least $\floor{\frac{k}{p^n}}$ equivalent to $z_i\pmod{p^n},$ so these contribute an extra $\floor{\frac{k}{p^n}}$ to the $\pp_i$-adic valuation. Hence, in total, the $\pp_i$-adic valuation of the numerator $\a(\a-1)\cdots(\a-k+1)$ is at least $\sum\limits_{n=1}^\infty\floor{\frac{k}{p^n}},$ as desired.
\end{proof}
And now the case of non-linear factors in $m_\a,$ which requires some more insight. First, a lemma about $p$-adic numbers, which implies that we can project algebraic numbers over $\Q_p$ onto $\Q_p.$ Note that the Riesz Lemma does not apply, since $\|\cdot\|_p$ is not a norm in the usual sense.
\begin{lemma}\label{min-dist-Q_p}
For any $y$ algebraic over $\Q_p,$ there exists an element $q\in\Q_p$ for which the $p$-adic distance between $y$ and $q,$ equal to $\|y-q\|_p,$ is minimized, i.e. for any other $q' \in \Q_p$ we have $\|y-q'\|_p\geq\|y-q\|_p.$
\end{lemma}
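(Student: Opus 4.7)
The plan is to leverage the discreteness of the extended $p$-adic valuation on the finite extension $K = \Q_p(y)/\Q_p$. By standard non-Archimedean valuation theory (see Neukirch \cite{alg-number-theory-neukirch}, Chap.~II), the unique extension of $v_p$ to $K$ has value group $\tfrac{1}{e}\Z$ where $e = e(K/\Q_p)$ is the ramification index, so $\|x\|_p$ for nonzero $x \in K$ lies in the set $\{p^{-k/e} : k \in \Z\}$. This set has the crucial property that its intersection with any interval $[a,b] \subset (0,\infty)$ is finite. Since $\{\|y-q\|_p : q \in \Q_p\}$ is contained in $\{0\} \cup \{p^{-k/e} : k \in \Z\}$, proving the lemma reduces to showing that the infimum $d = \inf_{q \in \Q_p} \|y-q\|_p$ is strictly positive (when $y \notin \Q_p$).

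First I would dispose of the trivial case $y \in \Q_p$ by taking $q = y$, which gives distance $0$. Otherwise, assume $y \notin \Q_p$. I would establish $d > 0$ by a completeness argument: if $d = 0$, there is a sequence $(q_n) \subset \Q_p$ with $\|y - q_n\|_p \to 0$, hence $q_n \to y$ in $K$; such a convergent sequence is Cauchy with respect to the $p$-adic metric, and since all $q_n$ lie in $\Q_p$, the sequence is Cauchy in $\Q_p$. Completeness of $\Q_p$ forces $q_n$ to converge to some $q_\infty \in \Q_p$, and uniqueness of limits in the Hausdorff space $K$ gives $q_\infty = y$, contradicting $y \notin \Q_p$.

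Finally, with $d > 0$ in hand, the set $S = \{\|y-q\|_p : q \in \Q_p\} \cap [d, 2d]$ sits inside the finite set $\{p^{-k/e} : k \in \Z\} \cap [d, 2d]$, and is nonempty since the infimum $d$ has approximating elements. A nonempty finite subset of $\R$ contains its minimum, and this minimum must equal $d$; the corresponding $q \in \Q_p$ realises the desired distance. The only place where care is needed is the citation of the discreteness of the value group on $\Q_p(y)$, but this is entirely standard, so there is no real obstacle—the argument is simply an application of discreteness plus completeness.
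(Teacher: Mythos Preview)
Your argument is correct, but it takes a genuinely different route from the paper's proof. The paper argues topologically: it first observes that for $q'$ of sufficiently large $p$-adic norm one has $\|y-q'\|_p > \|y-q\|_p$ for some fixed $q$, so the infimum over all of $\Q_p$ equals the infimum over a ball $p^{-n}\Z_p$; since $\Z_p$ (hence $p^{-n}\Z_p$) is compact and $q\mapsto\|y-q\|_p$ is continuous, the infimum is attained. Your proof instead exploits the algebraic structure of the finite extension $\Q_p(y)/\Q_p$: discreteness of the value group $\tfrac{1}{e}\Z$ forces the set of possible distances to be discrete in $(0,\infty)$, and completeness of $\Q_p$ forces the infimum to be strictly positive when $y\notin\Q_p$, so the infimum is actually a minimum. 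Your approach has the advantage of making explicit both that the minimal distance is positive for $y\notin\Q_p$ and that it has the form $p^{-k/e}$---facts the paper uses immediately afterward in Lemma~\ref{nonlinear-is-removed}. The paper's compactness argument, on the other hand, is slightly more self-contained in that it does not invoke the structure theory of extensions of local fields.
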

\begin{proof}

First, note that the p-adic norm $\|\cdot\|_p$ is continuous in the topology defined by it. In particular, on any compact subset of $\Q_p,$ it achieves a minimum value. 

Fix some element $q\in\Q_p,$ and consider $\|y-q\|_p<\infty.$ Now, for any $q'$ with greater norm than $q$ and twice greater than that of $y-q,$ we see, noting that $\|a+b\|_p=\max(\|a\|_p,\|b\|_p)$ if $\|a\|_p\neq\|b\|_p,$ $$
\|y-q'\|_p\geq\|q-q'\|_p-\|y-q\|_p=\|q'\|_p-\|y-q\|_p>\|y-q\|_p.
$$
In particular, for some $n,$ for any $q'$ with $\|q'\|_p>p^n,$ we have that $\|y-q'\|_p>\|y-q\|_p$ for some $q$ with $\|q\|_p\leq p^n.$ Hence, we have $\inf\limits_{q\in\Q_p}\|y-q\|_p=\inf\limits_{q\in p^{-n}\Z_p}\|y-q\|_p.$ But, since $\Z_p$ is compact, $p^{-n}\Z_p$ is as well. Hence, there exists $q^*\in p^{-n}\Z_p$ such that $$
\|y-q^*\|_p=\inf\limits_{q\in p^{-n}\Z_p}\|y-q\|_p=\inf\limits_{q\in\Q_p}\|y-q\|_p,
$$
as desired.
%
%
\end{proof}
\begin{lemma}\label{nonlinear-is-removed}
If $g_i$ is nonlinear, then there exists $k$ such that $v_{\pp_i}\left(\binom{\a}{k}\right)<0.$
\end{lemma}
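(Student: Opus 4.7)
The plan is to transport the statement into the local field $\Q_p(y)$ via Proposition~\ref{v_P-to-v_p} and then apply Lemma~\ref{min-dist-Q_p} to bound how closely $y$ can be approximated by rational integers. Concretely, writing $c = v_{\pp_i}(p)$, Proposition~\ref{v_P-to-v_p} yields
$$
v_{\pp_i}\!\left(\binom{\a}{k}\right) = c\left(\sum_{j=0}^{k-1} v_p(y - j) \;-\; v_p(k!)\right),
$$
so it suffices to produce a single $k$ with $\sum_{j=0}^{k-1} v_p(y-j) < v_p(k!) = \sum_{n \geq 1} \lfloor k/p^n \rfloor$. If $v_p(y) < 0$, then $v_p(y-j) = v_p(y) < 0$ for every $j \in \Z$ and already $k=1$ suffices, so I may assume $y$ is a $p$-adic integer.

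Next, I would apply Lemma~\ref{min-dist-Q_p} to choose $q^* \in \Q_p$ maximizing $M := v_p(y - q^*)$. Since $g_i$ is nonlinear, $y \notin \Q_p$, and hence $M < \infty$; a short ultrametric computation using $v_p(y) \geq 0$ shows moreover that $q^* \in \Z_p$. The critical observation is then: for every $j \in \Z \subset \Q_p$ we must have $v_p(y - j) \leq M$, since otherwise $j$ itself would give a closer $\Q_p$-approximation to $y$ than $q^*$. Combined with the ultrametric inequality, this yields, for each integer $n$ with $1 \leq n \leq \lfloor M \rfloor$, the equivalence
$$
v_p(y - j) \geq n \iff v_p(q^* - j) \geq n \iff j \equiv q^* \pmod{p^n},
$$
while no $j \in \Z$ satisfies $v_p(y - j) \geq n$ for $n > \lfloor M \rfloor$.

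Finally, I would specialize $k = p^m$ with $m > \lfloor M \rfloor$. Writing $N_n = \#\{j \in \{0, 1, \dots, p^m - 1\} : v_p(y - j) \geq n\}$, the previous step gives $N_n = p^{m-n}$ for $1 \leq n \leq \lfloor M \rfloor$ and $N_n = 0$ otherwise, so $\sum_{j=0}^{k-1} v_p(y - j) = \sum_{n=1}^{\lfloor M \rfloor} p^{m-n}$. By Legendre's formula, $v_p(k!) = \sum_{n=1}^{m} p^{m-n}$, and hence
$$
v_p(k!) - \sum_{j=0}^{k-1} v_p(y - j) \;\geq\; \sum_{n = \lfloor M \rfloor + 1}^{m} p^{m-n} \;\geq\; 1,
$$
which forces $v_{\pp_i}\!\left(\binom{\a}{k}\right) < 0$ and completes the proof. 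The main obstacle is the critical observation above --- the uniform bound $v_p(y - j) \leq M$ across all integer shifts --- since it rests essentially on $y \notin \Q_p$, which is precisely where the hypothesis that $g_i$ is nonlinear enters; once this bound is in place the rest is bookkeeping via Legendre's formula and the clean choice $k = p^{\lfloor M \rfloor + 1}$.
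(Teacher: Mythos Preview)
Your approach is essentially identical to the paper's: transport to $\Q_p(y)$ via Proposition~\ref{v_P-to-v_p}, invoke Lemma~\ref{min-dist-Q_p} to get a finite maximum $M=v_p(y-q^*)$ (finiteness coming precisely from $y\notin\Q_p$, i.e.\ $g_i$ nonlinear), and then take $k$ a suitable power of $p$ and compare with Legendre's formula. The paper chooses $k=p^{\lceil M\rceil+1}$; your $k=p^{\lfloor M\rfloor+1}$ is slightly sharper but works the same way.

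There is, however, one oversight. Your identity
\[
\sum_{j=0}^{k-1} v_p(y-j)\;=\;\sum_{n=1}^{\lfloor M\rfloor} N_n
\]
is the discrete layer-cake formula, which is valid only when every $v_p(y-j)$ is a nonnegative \emph{integer}. But if the extension $\Q_p(y)/\Q_p$ is ramified, $M$ need not be an integer: for those $j$ with $v_p(q^*-j)\geq\lceil M\rceil$ (there are exactly $p^{\,m-\lfloor M\rfloor-1}$ of them) one has $v_p(y-j)=M$, contributing a fractional part $M-\lfloor M\rfloor$ that your sum $\sum_n N_n$ misses. So in general your displayed equality is only a lower bound, which is the wrong direction for what you need.

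The fix is immediate and your final inequality survives: the missing contribution is
\[
(M-\lfloor M\rfloor)\cdot p^{\,m-\lfloor M\rfloor-1}\;<\;p^{\,m-\lfloor M\rfloor-1},
\]
which is exactly the $n=\lfloor M\rfloor+1$ term of $v_p(k!)=\sum_{n=1}^{m}p^{\,m-n}$, so one still gets $\sum_j v_p(y-j)<v_p(k!)$. The paper handles this by working with $\lceil M\rceil$ throughout and writing the fractional term $(M+1-\lceil M\rceil)p$ explicitly. Incidentally, since $\a$ is an algebraic integer in this section, $y$ is integral over $\Z_p$ and your preliminary case $v_p(y)<0$ never actually occurs.
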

\begin{proof}
First, note that in this case the element $y,$ which generates the extension of $\Q_p$ given by $\Q_p[x]/g_i(x)$ (see the notation of Proposition \ref{v_P-to-v_p}), is not contained in $\Q_p,$ since the extension is non-trivial. Thus, by Lemma \ref{min-dist-Q_p}, there exists some element $q\in\Q_p$ that minimizes the $p$-adic distance from $y,$ which is $\|y-q\|_p>0$ (it is nonzero since $y\not\in\Q_p).$ Hence, $q$ maximizes $v_p(y-q)=m<\infty.$ Note that theoretically $m$ may be a rational number and not an integer.

Now, we wish for every integer $k$ to consider the numbers $z\in\Z$ such that $v_p(y-z)>k.$ This allows us to bound the $p$-adic valuation of the product $y(y-1)\cdots(y-p^n+1)$ for large $n.$ First, note that if $v_p(q-z)<v_p(y-q),$ then $$
v_p(y-z)=v_p((y-q)+(q-z))=v_p(q-z).
$$
On the other hand, if $v_p(q-z)\geq v_p(y-q),$ then $v_p(y-z)\geq v_p(y-q),$ but by definition of $q$ as the element of $\Q_p$ at which the maximum $p$-adic valuation is achieved, we must have $v_p(y-z)=v_p(y-q)=m.$

In particular, the $p$-adic valuation of $y-z$ is a discrete function, which can only equal either integers strictly less than $m$ or $m$ itself. For any integer $n<m,$ we have $v_p(y-z)=n$ iff $v_p(q-z)=n,$ so $z\equiv q\pmod{p^n}.$ And $v_p(y-z)=m$ iff $z\equiv q\pmod{p^{\lceil m\rceil}}.$ Thus, we have $$
v_p(y(y-1)\cdots(y-p^{\lceil m\rceil+1}+1))=\sum_{n=1}^{\lceil m\rceil-1}p^{\lceil m\rceil-n+1}+(m+1-\lceil m\rceil)p<\sum_{n=1}^\infty\floor{\frac{p^{\lceil m\rceil+1}}{p^n}}.
$$
Hence, by Proposition \ref{v_P-to-v_p}, the numerator of $\binom{\a}{p^{\lceil m\rceil+1}}$ has $\pp_i$-adic valuation strictly less than $v_{\pp_i}(p)\sum\limits_{n=1}^\infty\floor{\frac{p^{\lceil m\rceil}+1}{p^n}}.$ On the other hand, the denominator, as before, has $\pp_i$-adic valuation $v_{\pp_i}(p)\sum\limits_{n=1}^\infty\floor{\frac{p^{\lceil m\rceil}+1}{p^n}}.$ Thus, the $\pp_i$-adic valuation of $\binom{\a}{p^{\lceil m\rceil+1}}$ is strictly negative, as desired.
\end{proof}
Lemmas \ref{nonlinear-is-removed} and \ref{linear-is-not-removed} allow us to form a complete characterization of $R_+(\a)$ based on Corollary \ref{ring-in-terms-of-prime-valuation}. We first need a definition, though.
\begin{definition}\label{Q_p}
For an algebraic integer $\a,$ for every prime $p,$ define $\PP_p$ to be the (finite) set of prime ideals $\pp$ of $\OO_{\Q(\a)}$ over $(p)$ corresponding to linear factors of $m_\a$ in $\Q_p.$
Also consider the set $\{\qq_i\}$ of prime ideals over $(p)$ corresponding to nonlinear factors of $m_\a$. Define $Q_p=\prod_i\qq_i$, and let $Q_p=(1)=\OO_{\Q(\a)}$ if there are no such prime ideals.
\end{definition}
\begin{theorem}[Valuation Description]\label{valuation-description-for-alg-integers}
For any algebraic integer $\a$ that is not a nonnegative integer, the ring $R_+(\a)$ is equal to the set of all elements of $\Q(\a)$ with nonnegative $\pp$-adic valuation for every prime ideal $\pp\in\PP_p$ and prime number $p\in\Z.$
\end{theorem}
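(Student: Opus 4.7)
The plan is to apply Corollary~\ref{ring-in-terms-of-prime-valuation} to the overring $R_+(\a)$ of $\OO_{\Q(\a)}$, where the central question becomes identifying the set $\QQ$ of prime ideals $\qq \subset \OO_{\Q(\a)}$ such that every element of $R_+(\a)$ has nonnegative $\qq$-adic valuation. I claim this set is exactly $\bigcup_p \PP_p$, and the two Lemmas~\ref{linear-is-not-removed} and~\ref{nonlinear-is-removed} have been engineered precisely to establish the two inclusions.

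First I would verify the hypotheses of Corollary~\ref{ring-in-terms-of-prime-valuation}. By Corollary~\ref{its-a-ring}, $R_+(\a)$ is a ring, and by Proposition~\ref{ring-of-integers}, $\OO_{\Q(\a)} \subset R_+(\a)$; since each $\binom{\a}{k} \in \Q(\a)$ and $R_+(\a)$ is generated additively by these, $R_+(\a) \subset \Q(\a)$. Thus $R_+(\a)$ is an overring of $\OO_{\Q(\a)}$ as required.

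Next I would prove the forward inclusion $\bigcup_p \PP_p \subset \QQ$. For any $\pp \in \PP_p$, Lemma~\ref{linear-is-not-removed} gives $v_\pp\bigl(\binom{\a}{k}\bigr) \geq 0$ for all $k \geq 0$. Since an arbitrary element of $R_+(\a)$ is a nonnegative integer linear combination of such binomial coefficients, and valuations satisfy $v_\pp(x+y) \geq \min(v_\pp(x), v_\pp(y))$ and $v_\pp(n) \geq 0$ for any integer $n$, every element of $R_+(\a)$ has nonnegative $\pp$-adic valuation. Hence $\pp \in \QQ$.

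For the reverse inclusion $\QQ \subset \bigcup_p \PP_p$, I would show the contrapositive: if $\qq$ is a prime ideal of $\OO_{\Q(\a)}$ lying above some $(p)$ and corresponding (via Proposition~\ref{correspondence-between-p-adic-fields}) to a nonlinear irreducible factor of $m_\a$ in $\Q_p[x]$, then Lemma~\ref{nonlinear-is-removed} supplies some $k$ with $v_\qq\bigl(\binom{\a}{k}\bigr) < 0$. Since this binomial coefficient lies in $R_+(\a)$, we get $\qq \notin \QQ$. Combining the two inclusions yields $\QQ = \bigcup_p \PP_p$, and Corollary~\ref{ring-in-terms-of-prime-valuation} then delivers the claim. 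I do not expect any real obstacle: all the heavy lifting is already done in the preceding lemmas, and the remaining work is the routine bookkeeping of gluing them together through Corollary~\ref{ring-in-terms-of-prime-valuation}.
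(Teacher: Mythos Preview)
Your proposal is correct and follows essentially the same approach as the paper: verify that $R_+(\a)$ is an overring of $\OO_{\Q(\a)}$, use Lemma~\ref{linear-is-not-removed} to show that primes in $\bigcup_p\PP_p$ lie in $\QQ$, use Lemma~\ref{nonlinear-is-removed} to show that all other primes are excluded from $\QQ$, and conclude by Corollary~\ref{ring-in-terms-of-prime-valuation}. The paper's proof is terser but the logic is identical; if anything, you spell out the overring hypothesis and the valuation-of-linear-combination step more explicitly than the paper does.
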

\begin{proof}
From Lemma \ref{linear-is-not-removed} it follows that all elements of $R_+(\a)$ have a nonnegative valuation with respect to any $\qq\in\PP_p$ and from Lemma \ref{nonlinear-is-removed} it follows that there are elements of $R_+(\a)$ which have negative valuation for all other prime ideals. Thus, Corollary \ref{ring-in-terms-of-prime-valuation} provides us with the desired result.
\end{proof}
We can also straightforwardly reformulate this Theorem in terms of the inverses of certain ideals from Definition \ref{Q_p}.
\begin{theorem}[Alternate Description]\label{exact-ideal-representation}
For any algebraic integer $\a$ that is not a nonnegative integer, the ring $R_+(\a)$ can be described as $R_+(\a)=\OO_{\Q(\a)}[Q_2^{-1},Q_3^{-1},\dots].$
\end{theorem}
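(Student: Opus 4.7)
The plan is to deduce this alternate description directly from the Valuation Description (Theorem \ref{valuation-description-for-alg-integers}), by checking that the two sides impose the same conditions on prime-ideal valuations. Throughout, write $K = \Q(\a)$ and recall from Definition \ref{Q_p} that $Q_p$ is the product of the prime ideals over $(p)$ not in $\PP_p$.

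For the inclusion $\OO_K[Q_2^{-1}, Q_3^{-1}, \dots] \subset R_+(\a)$, I would show that every generator satisfies the valuation criterion from Theorem \ref{valuation-description-for-alg-integers}. Elements of $\OO_K$ have nonnegative valuation at every prime ideal. For $x \in Q_p^{-1}$, the definition of the $\pp$-adic valuation gives $v_{\qq_i}(x) \geq -1$ for each $\qq_i$ dividing $Q_p$, and $v_\pp(x) \geq 0$ for every other prime ideal $\pp \subset \OO_K$. In particular, for any prime number $r$ and any $\pp \in \PP_r$: if $r \neq p$ then $\pp$ does not divide $Q_p$, so $v_\pp(x) \geq 0$; if $r = p$ then $\pp \in \PP_p$ excludes it from the factors of $Q_p$, so again $v_\pp(x) \geq 0$. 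Thus $Q_p^{-1} \subset R_+(\a)$ by Theorem \ref{valuation-description-for-alg-integers}, and since $R_+(\a)$ is a ring containing $\OO_K$, the whole overring $\OO_K[Q_2^{-1}, Q_3^{-1}, \dots]$ lies inside $R_+(\a)$.

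For the reverse inclusion, take $\b \in R_+(\a)$ and consider the factorization of the fractional ideal $(\b) = \prod_\pp \pp^{v_\pp(\b)}$. By the Valuation Description, any prime $\pp$ with $v_\pp(\b) < 0$ cannot lie in $\PP_p$ for any $p$; equivalently it corresponds to a nonlinear factor of $m_\a$ over some $\Q_p$ and therefore divides $Q_p$. Grouping together, write $(\b) = I \cdot \prod_j \qq_j^{-n_j}$, where $I$ is an integral ideal and each $\qq_j$ divides some $Q_{p_j}$ with $n_j > 0$. The containment $Q_{p_j} \subset \qq_j$ yields $\qq_j^{-1} \subset Q_{p_j}^{-1}$, hence $\qq_j^{-n_j} \subset Q_{p_j}^{-n_j} \subset \OO_K[Q_2^{-1}, Q_3^{-1}, \dots]$. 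Multiplying the integral part by these fractional ideals (all inside the overring) shows $(\b) \subset \OO_K[Q_2^{-1}, Q_3^{-1}, \dots]$, so in particular $\b$ itself lies there.

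I do not expect a real obstacle here: the only subtle point is keeping straight the behavior of the fractional ideal $Q_p^{-1}$ at primes both over $(p)$ and away from $(p)$, and the elementary fact that $Q_{p_j} \subset \qq_j$ implies $\qq_j^{-1} \subset Q_{p_j}^{-1}$. Once those are in hand, the result reduces to unraveling the valuation description with the ideal factorization of $(\b)$.
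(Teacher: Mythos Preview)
Your proposal is correct and follows essentially the same approach as the paper, which treats this theorem as a straightforward reformulation of the Valuation Description (Theorem~\ref{valuation-description-for-alg-integers}) and does not supply a separate proof. You have simply spelled out the details of that reformulation: checking that elements of each $Q_p^{-1}$ meet the valuation criterion, and conversely that any $\b$ with nonnegative valuation at every $\pp\in\PP_p$ has its principal ideal contained in the overring generated by the $Q_p^{-1}$.
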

\subsection{Extension to all algebraic numbers}
In the previous subsection we obtained a complete description of $R_+(\a)$ for algebraic integers. Here we extend this description to the case of general algebraic numbers.

First, we would like to provide a definition that allows us to connect the two cases of algebraic integers and algebraic non-integers.
\begin{definition}
For any algebraic number $\a,$ define the {\it denominator} of $\a$ as the smallest positive integer $\d$ such that $\b=\d\a$ is an algebraic integer. Define the {\it integral part} of $\a$ as the resulting value $\b.$
\end{definition}
It is well-known that we can multiply any algebraic number by a positive integer to obtain an algebraic integer, so the above notion is well-defined.

Now it seems straightforward that there should be some connection between $R_+(\alpha)$ and $R_+(\beta)$, where $\beta$ is the integral part of $\alpha$. However, the issue is that the added denominator may decrease the $\pp$-adic valuation of binomial coefficients for ideals $\pp$ corresponding to linear terms. This does not turn out to be a significant issue. Say $\a=\frac{\b}{\d},$ where $\b$ is the integral part and $\d$ is the denominator. As before, we care about the $\pp$-adic valuation of elements $\binom{\a}{k}.$ Since the $\pp$-adic valuation of any integer not divisible by $p$ is zero, we only care about the factors of $p$ in the denominator $\d.$ Now, we have $\binom{\a}{k}=\frac{\b(\b-\d)\cdots(\b-k\d+\d)}{\d^kk!}.$ But, if $\b$ has $\pp$-adic valuation greater than or equal to that of $\d,$ then all of the elements in the numerator automatically have $\pp$-adic valuation greater than or equal to that of $\d,$ which takes care of the $\d^k$ term in the denominator. It remains to examine the $k!$ part. Below we treat this case in more detail:
\begin{lemma}\label{ideal-inverse-in-iff-valuation-negative}
If $\pp$ corresponds to a linear factor of $m_\b$ in $\Z_p$ and $\a=\frac{\b}{\d}$ has nonnegative $\pp$-adic valuation, then for every integer $k$ the element $\binom{\a}{k}$ also has nonnegative $\pp$-adic valuation.
\end{lemma}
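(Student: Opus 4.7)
The plan is to reduce this statement directly to the argument already used in Lemma \ref{linear-is-not-removed}, with one extra rescaling step that exploits the hypothesis $v_\pp(\alpha)\ge 0$.

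First, I would invoke Proposition \ref{v_P-to-v_p}. Because $\pp$ corresponds to a linear factor $g(x)=x-z$ of $m_\b$ in $\Q_p[x]$, the $p$-adic extension $\Q_p[x]/g(x)$ is just $\Q_p$ itself, so the ramification index and residue degree are both $1$, giving $c=v_\pp(p)=1$. Thus for every polynomial $r(x)\in\Q[x]$ we have $v_\pp(r(\b))=v_p(r(z))$, where $z\in\Z_p$ is the chosen root. In particular, the hypothesis $v_\pp(\a)\ge 0$ translates to $v_p(z)\ge v_p(\d)$, so the quotient $w:=z/\d$ lies in $\Z_p$.

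Next, I would write
\[
\binom{\a}{k}=\frac{\b(\b-\d)(\b-2\d)\cdots(\b-(k-1)\d)}{\d^{k}k!},
\]
and compute $v_\pp$ of numerator and denominator separately. Each factor $\b-j\d$ in the numerator corresponds to the polynomial $x-j\d$, whose value at $z$ is $z-j\d=\d(w-j)$, giving $v_\pp(\b-j\d)=v_p(\d)+v_p(w-j)$. Since $\d\in\Z$ and $\d^{k}k!\in\Z$, and $v_\pp$ agrees with $v_p$ on integers (again because $v_\pp(p)=1$), the denominator contributes $k\,v_p(\d)+v_p(k!)$. The $k\,v_p(\d)$ terms cancel, leaving
\[
v_\pp\binom{\a}{k}=\sum_{j=0}^{k-1}v_p(w-j)\;-\;v_p(k!).
\]

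Finally, I would show the remaining sum dominates $v_p(k!)$ by the same counting argument as in Lemma \ref{linear-is-not-removed}: for each $n\ge 1$, among the integers $0,1,\ldots,k-1$ there are at least $\lfloor k/p^{n}\rfloor$ residues congruent to $w\pmod{p^{n}}$ (since $w\in\Z_p$), each contributing an extra unit to $v_p(w-j)$ at level $n$. Summing these contributions yields
\[
\sum_{j=0}^{k-1}v_p(w-j)\;\ge\;\sum_{n=1}^{\infty}\left\lfloor\frac{k}{p^{n}}\right\rfloor=v_p(k!),
\]
which gives $v_\pp\binom{\a}{k}\ge 0$ as required. There is no real obstacle here; the only conceptual point is noticing that replacing $z$ by $w=z/\d\in\Z_p$ (which is exactly what the hypothesis $v_\pp(\a)\ge 0$ buys) lets the earlier integral argument run verbatim for the rescaled numerator.
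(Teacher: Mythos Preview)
Your proof is correct and follows essentially the same approach as the paper's: both reduce to the $p$-adic counting argument of Lemma~\ref{linear-is-not-removed} via Proposition~\ref{v_P-to-v_p}. Your substitution $w=z/\d\in\Z_p$ is a slightly cleaner bookkeeping device than the paper's, which writes $\d=p^e d$ with $p\nmid d$ and tracks the residues $z^{(f)}\pmod{p^{f-e}}$ separately, but the content is identical.
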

\begin{proof}
Say $v_p(\d)=e,$ so that $\d=p^ed$ where $d$ is not divisible by $p.$ As in Lemma \ref{linear-is-not-removed}, the denominator of $\binom{\a}{k}=\frac{\b(\b-p^ed)\cdots(\b-(k-1)p^ed)}{p^{ek}d^kk!}$ has $\pp$-adic valuation $$
v_\pp(p^{ek}d^kk!)=ek+\sum\limits_{n=1}^\infty\floor{\frac{k}{p^n}}.
$$
By Proposition \ref{v_P-to-v_p}, we have $v_\pp(\b-np^ed)=v_p(z-np^ed),$ where $z\in\Z_p$ is the root of $m_\beta$ corresponding to the ideal $\pp.$ Since $\b$ has greater $\pp$-valuation than $p^ed=\d,$ this means that $v_p(z)=v_\pp(\b)\geq v_\pp(p^ed)=e.$ Since $d$ is not divisible by $p,$ this means that for every $f>e,$ there exists a unique residue $z^{(f)}\pmod{p^{f-e}}$ such that $z^{(f)}p^ed\equiv z\pmod{p^f}.$ In particular, every number $n$ from $0$ to $k-1$ contributes $e$ to the total $\pp$-adic valuation of the product $\prod\limits_{n=0}^{k-1}(\b-np^ed),$ and for every $f>e,$ every number $n$ equivalent to $z^{(f)}\pmod{p^{f-e}}$ contributes an extra $1$ to the total valuation. As before, among the integers $0,1,\dots,k-1$ there are at least $\floor{\frac{k}{p^{f-e}}}$ that are equivalent to $z^{(f)}\pmod{p^{f-e}}.$ Hence, we have $$
v_\pp(\b(\b-p^ed)\cdots(\b-(k-1)p^ed))\geq ek+\sum\limits_{f=e+1}^\infty\floor{\frac{k}{p^{f-e}}}=ek+\sum\limits_{n=1}^\infty\floor{\frac{k}{p^n}},
$$
which is the $\pp$-adic valuation of the denominator. Hence, $\binom{\a}{k}$ has nonnegative $\pp$-adic valuation, as desired.
\end{proof}
In particular, the additional prime ideals whose inverses are appended are exactly the primes with respect to which $\a$ has a negative valuation. 
\begin{definition}\label{P_p-updated}
For an algebraic number $\a$ with integral part $\b,$ for every prime $p,$ define $\PP_p'$ to be the (finite) set of prime ideals $\pp$ of $\OO_{\Q(\b)}$ over $(p),$ corresponding to linear factors of $m_\b$ in $\Q_p,$ which satisfy $v_\pp(\a)\geq0.$ Denote this set by $\PP_p'$.
\end{definition}
\begin{theorem}[Valuation Description]\label{exact-description-for-all-alg-nums-valuation}
For any algebraic number $\a$ that is not a nonnegative integer, the ring $R_+(\a)$ is equal to the set of all elements of $\Q(\a)$ with nonnegative $\pp$-adic valuation for every prime ideal $\pp\in\PP_p'$ for every prime number $p\in\Z.$
\end{theorem}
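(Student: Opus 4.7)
The plan is to reduce this to the algebraic integer case (Theorem \ref{valuation-description-for-alg-integers}) via Corollary \ref{ring-in-terms-of-prime-valuation}. Write $\a=\b/\d$, where $\b$ is the integral part and $\d$ the (positive integer) denominator. Since $\d\in\Z$, we have $\Q(\a)=\Q(\b)$ and $\OO_{\Q(\a)}=\OO_{\Q(\b)}$, so the ambient field and the set of prime ideals under consideration are the same in both the $\a$- and $\b$-pictures.

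First I would observe that $R_+(\a)$ is a ring by Corollary \ref{its-a-ring}, contains $\Z$ and $\a$, and hence contains $\b=\d\a$. By Lemma \ref{binomial-coeffs-of-binomial-coeffs-1} this yields $R_+(\b)\subset R_+(\a)$, and in particular $\OO_{\Q(\a)}\subset R_+(\a)$. Thus $R_+(\a)$ is an overring of $\OO_{\Q(\a)}$, so by Corollary \ref{ring-in-terms-of-prime-valuation} it is determined by the set $\QQ$ of prime ideals $\pp$ of $\OO_{\Q(\a)}$ at which every element of $R_+(\a)$ has nonnegative valuation. It then suffices to identify $\QQ=\bigcup_p \PP_p'$.

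For the inclusion $\bigcup_p \PP_p'\subset\QQ$, I would note that $R_+(\a)$ is generated as an additive monoid by $\Z_{\geq 0}$ together with the $\binom{\a}{k}$, so it suffices to verify $v_\pp\bigl(\binom{\a}{k}\bigr)\geq 0$ for every $\pp\in\PP_p'$ and every $k\geq 0$. This is exactly the content of Lemma \ref{ideal-inverse-in-iff-valuation-negative}, which was tailored for this purpose. For the reverse inclusion $\QQ\subset\bigcup_p \PP_p'$, I would argue by exhibiting, for each $\pp$ over a rational prime $p$ with $\pp\notin\PP_p'$, an element of $R_+(\a)$ with negative $\pp$-adic valuation. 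Either $\pp$ corresponds to a nonlinear factor of $m_\b$ over $\Q_p$, in which case Lemma \ref{nonlinear-is-removed} applied to the algebraic integer $\b$ produces some $k$ with $v_\pp\bigl(\binom{\b}{k}\bigr)<0$, and $\binom{\b}{k}\in R_+(\b)\subset R_+(\a)$; or $\pp$ corresponds to a linear factor of $m_\b$ with $v_\pp(\a)<0$, in which case $\a\in R_+(\a)$ itself has negative $\pp$-adic valuation.

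The only substantive step is the linear-factor analysis that checks $\binom{\a}{k}$ remains $\pp$-integral when the ``scaling'' $\d$ is introduced, and that is precisely the point handled by Lemma \ref{ideal-inverse-in-iff-valuation-negative}: the condition $v_\pp(\a)\geq 0$ built into the definition of $\PP_p'$ is exactly the hypothesis that ensures the extra $\d^k$ appearing in the denominator of $\binom{\a}{k}$ is compensated by additional $\pp$-adic valuation in the numerator. Once this is invoked, the theorem follows by assembling the two inclusions above and applying Corollary \ref{ring-in-terms-of-prime-valuation}.
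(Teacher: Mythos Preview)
Your proposal is correct and follows essentially the same route as the paper: both arguments observe that $R_+(\a)$ is an overring of $\OO_{\Q(\a)}$ (via $R_+(\b)\subset R_+(\a)$), invoke Corollary \ref{ring-in-terms-of-prime-valuation}, then use Lemma \ref{ideal-inverse-in-iff-valuation-negative} for the $\PP_p'\subset\QQ$ inclusion and Lemma \ref{nonlinear-is-removed} together with $\a\in R_+(\a)$ for the reverse inclusion. Your write-up is in fact slightly more explicit than the paper's about why $R_+(\b)\subset R_+(\a)$ and why the overring hypothesis holds, but the substance is identical.
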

\begin{proof}
Say $\b$ is the integral part of $\a$ and $\d$ is the denominator of $\a.$ Then from Lemma \ref{nonlinear-is-removed} we already know that there are elements of $R_+(\b)$ with negative valuation with respect to the prime ideals corresponding to nonlinear factors in $m_\b.$ Since $R_+(\b)\subset R_+(\a)$ it follows that there are such elements in $R_+(\a)$. Now, $R_+(\a)$ also contains elements with negative $\pp$-adic valuation for every prime $\pp$ such that $v_\pp(\a)<0$ (since $\a\in R_+(\a)).$ 
Lemma \ref{ideal-inverse-in-iff-valuation-negative} states that all elements of $R_+(\a)$ have nonnegative valuation with respect to all other prime ideals. Hence, the result follows from Corollary \ref{ring-in-terms-of-prime-valuation}. Note that $m_\a(x)=m_\b(\d x),$ so $m_\a$ factorizes in exactly the same way as $m_\b.$ Therefore, the same prime ideals correspond to factors of $m_\a,$ as desired.
\end{proof}
We can provide another description of $R_+(\a)$.
Indeed, from Lemma \ref{prime-inverse-in}, we know that $\OO_{\Q(\a)}[\a]$ must contain the inverses of all prime ideals such that $v_{\pp}(\a)<0$. In particular, we obtain the following result.
\begin{prop}\label{alg-num-to-alg-int}
For any algebraic number $\a$ that is not a nonnegative integer, the ring $R_+(\a)$ is equal to $R_+(\OO_{\Q(\a)})[\a].$
\end{prop}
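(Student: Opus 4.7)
The plan is to prove two inclusions separately, using the valuation-theoretic characterizations established earlier in the excerpt.

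\textbf{Easy direction: $R_+(\OO_{\Q(\a)})[\a]\subseteq R_+(\a)$.}  Since $R_+(\a)$ is a ring (by Corollary \ref{its-a-ring}) and contains $\OO_{\Q(\a)}$ (by Proposition \ref{ring-of-integers}), every element $s\in\OO_{\Q(\a)}$ satisfies $R_+(s)\subseteq R_+(\a)$ by Lemma \ref{binomial-coeffs-of-binomial-coeffs}. Taking the sum over all $s\in\OO_{\Q(\a)}$ gives $R_+(\OO_{\Q(\a)})\subseteq R_+(\a)$, and since $R_+(\a)$ is a ring containing $\a$, we can adjoin $\a$ to obtain $R_+(\OO_{\Q(\a)})[\a]\subseteq R_+(\a)$.

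\textbf{Reverse direction: $R_+(\a)\subseteq R_+(\OO_{\Q(\a)})[\a]$.}  Let $\b$ be the integral part of $\a$, so that $\Q(\b)=\Q(\a)$, $\OO_{\Q(\b)}=\OO_{\Q(\a)}$, and (by the remarks preceding Proposition \ref{R+-of-ring-is-a-ring}) $R_+(\OO_{\Q(\a)})=R_+(\b)$. So it suffices to show $R_+(\a)\subseteq R_+(\b)[\a]$. Now $R_+(\b)[\a]$ is an overring of $\OO_{\Q(\a)}$, so by Corollary \ref{ring-in-terms-of-prime-valuation} it is precisely the set of $x\in\Q(\a)$ with $v_\qq(x)\geq 0$ for every prime $\qq$ in the set $\QQ$ of ideals at which \emph{all} elements of $R_+(\b)[\a]$ have nonnegative valuation. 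I will show $\QQ=\bigcup_p\PP_p'$, which combined with Theorem \ref{exact-description-for-all-alg-nums-valuation} finishes the proof.

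\textbf{Identifying $\QQ$.}  For $\pp\in\PP_p'$: $\pp$ corresponds to a linear factor of $m_\b$, so by Theorem \ref{valuation-description-for-alg-integers} every element of $R_+(\b)$ has $v_\pp\geq 0$; and $v_\pp(\a)\geq 0$ by definition of $\PP_p'$; hence every element of $R_+(\b)[\a]$ has $v_\pp\geq 0$, so $\pp\in\QQ$. Conversely, suppose $\pp$ is a prime of $\OO_{\Q(\a)}$ not in any $\PP_p'$. Either (a) $\pp$ corresponds to a nonlinear factor of $m_\b$ in $\Q_p$, in which case Lemma \ref{nonlinear-is-removed} produces some $\binom{\b}{k}\in R_+(\b)$ with $v_\pp\binom{\b}{k}<0$; or (b) $\pp$ corresponds to a linear factor of $m_\b$ but $v_\pp(\a)<0$, in which case $\a$ itself is an element of $R_+(\b)[\a]$ with negative $\pp$-adic valuation. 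In either case $\pp\notin\QQ$. Since $m_\a(x)=m_\b(\d x)$ factors over $\Q_p$ the same way as $m_\b$ (as noted in the proof of Theorem \ref{exact-description-for-all-alg-nums-valuation}), this exhausts all prime ideals. Therefore $\QQ=\bigcup_p\PP_p'$ exactly.

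\textbf{Main obstacle.}  No genuine obstacle remains—the proposition really is a repackaging of Theorem \ref{exact-description-for-all-alg-nums-valuation} once one observes that adjoining $\a$ to $R_+(\b)$ inverts precisely the prime ideals of $\OO_{\Q(\a)}$ at which $v_\pp(\a)<0$ (via Lemma \ref{prime-inverse-in}), which are exactly the linear primes removed from $\PP_p$ to obtain $\PP_p'$. The only delicate point is the invocation of Corollary \ref{ring-in-terms-of-prime-valuation}, which requires $R_+(\b)[\a]$ to be an overring of $\OO_{\Q(\a)}$; this is automatic because $\OO_{\Q(\a)}=\OO_{\Q(\b)}\subseteq R_+(\b)$ by Proposition \ref{ring-of-integers}.
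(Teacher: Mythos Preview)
Your proof is correct and follows essentially the same approach as the paper: both identify $R_+(\OO_{\Q(\a)})$ with $R_+(\b)$, apply Corollary \ref{ring-in-terms-of-prime-valuation} to the overring $R_+(\b)[\a]$, and determine its set $\QQ$ of ``good'' primes. The only cosmetic difference is that you invoke Theorem \ref{exact-description-for-all-alg-nums-valuation} (just proved) to conclude, whereas the paper instead appeals directly to its key ingredient, Lemma \ref{ideal-inverse-in-iff-valuation-negative}, together with Corollary \ref{ring-in-terms-of-prime-valuation}; the content is the same.
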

\begin{proof}
The inclusion of $R_+(\OO_{\Q(\a)})[\a]$ in $R_+(\a)$ is obvious. Conversely, if we write $\a=\frac{\b}{\d},$ where $\d$ is the denominator of $\a,$ then note that $R_+(\OO_{\Q(\a)})=R_+(\b).$ But $R_+(\b)$ contains $\OO_{\Q(\a)},$ and so $R_+(\b)[\a]$ contains the inverses of all prime ideals $\pp$ with respect to which $\a$ has negative $\pp$-valuation. But this means, by Lemma \ref{ideal-inverse-in-iff-valuation-negative} in conjunction with Corollary \ref{ring-in-terms-of-prime-valuation}, that $R_+(\b)[\a]$ contains $R_+(\a),$ as desired.
\end{proof}
We can also unpack this result a bit more using Theorem \ref{exact-ideal-representation}.
\begin{theorem}[Alternative Description]\label{exact-description-for-all-alg-nums}
For any algebraic number $\a$ with integral part $\b,$ consider $Q_p$ as defined with respect to $\b$ in Definition \ref{Q_p}. Then $$
R_+(\a)=R_+(\b)[\a]=R_+(\OO_{\Q(\a)})[\a]=\OO_{\Q(\a)}[\a,Q_2^{-1},Q_3^{-1},\dots].
$$
\end{theorem}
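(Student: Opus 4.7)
The plan is to prove the theorem as a straightforward chain of identifications using results already established. Write $\a = \b/\d$ where $\b$ is the integral part of $\a$ and $\d$ is its (positive integer) denominator. The key preliminary observation is that $\Q(\a)=\Q(\b)$, since $\b$ differs from $\a$ by multiplication by a nonzero rational; consequently $\OO_{\Q(\a)}=\OO_{\Q(\b)}$, and moreover the factorization of $m_\a$ over $\Q_p$ mirrors that of $m_\b$ (as already noted at the end of the proof of Theorem \ref{exact-description-for-all-alg-nums-valuation}), so the ideals $Q_p$ in the statement are unambiguously defined.

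First I would establish $R_+(\OO_{\Q(\a)})=R_+(\b)$. Since $\b$ is a non-nonnegative-integer algebraic integer, Proposition \ref{ring-of-integers} gives $\OO_{\Q(\b)}\subset R_+(\b)$, and the remark immediately preceding Proposition \ref{R+-of-ring-is-a-ring} then yields $R_+(\OO_{\Q(\b)})=R_+(\b)$. Combined with $\OO_{\Q(\a)}=\OO_{\Q(\b)}$, this gives the claimed equality. Applying Proposition \ref{alg-num-to-alg-int}, we obtain
\[
R_+(\a)\;=\;R_+(\OO_{\Q(\a)})[\a]\;=\;R_+(\b)[\a],
\]
which accounts for the first two equalities in the statement. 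For the third, I apply Theorem \ref{exact-ideal-representation} to the algebraic integer $\b$, obtaining $R_+(\b)=\OO_{\Q(\b)}[Q_2^{-1},Q_3^{-1},\dots]$ with the $Q_p$ taken with respect to $\b$. Substituting this into the previous display and commuting the order in which generators are adjoined yields
\[
R_+(\a)\;=\;\OO_{\Q(\b)}[Q_2^{-1},Q_3^{-1},\dots][\a]\;=\;\OO_{\Q(\a)}[\a,Q_2^{-1},Q_3^{-1},\dots].
\]

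No serious obstacle is anticipated: the theorem is essentially a bookkeeping corollary that packages Proposition \ref{alg-num-to-alg-int} together with the algebraic-integer description from Theorem \ref{exact-ideal-representation}. The only mild points requiring care are the identification $R_+(\OO_{\Q(\a)})=R_+(\b)$, which relies on $\b$ generating the full ring of integers in the sense that $\OO_{\Q(\a)}\subset R_+(\b)$, and the observation that the ideals $Q_p$ really do not depend on whether one reads them off from $m_\a$ or $m_\b$. Both of these are immediate from material already in the paper.
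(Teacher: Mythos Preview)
Your proposal is correct and is precisely the approach the paper takes: the theorem is stated there without a formal proof, merely as ``unpacking'' Proposition \ref{alg-num-to-alg-int} via Theorem \ref{exact-ideal-representation}, which is exactly the chain of identifications you spell out. One minor bibliographic nit: the remark you invoke for $R_+(\OO_{\Q(\b)})=R_+(\b)$ actually appears immediately \emph{after} Proposition \ref{R+-of-ring-is-a-ring} rather than before it, but the content is as you describe.
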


We end this section by providing a proof of Proposition \ref{R+-of-ring-is-a-ring}, described at the end of Section \ref{section-integral-elements}. We actually prove a stronger statement, describing the exact structure of $R_+(S)$ for any ring $S.$
\begin{lemma}\label{R+-of-ring-is-ring}
For any ring $S$ with field of fractions $K$ algebraic over $\Q,$ the set $R_+(S)$ is a ring equal to $R_+(\OO_K)[S].$
\end{lemma}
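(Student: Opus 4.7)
The plan is to prove $R_+(S) = R_+(\OO_K)[S]$, from which the ring structure of $R_+(S)$ follows since the right-hand side is manifestly a ring once $R_+(\OO_K)$ is one. The main difficulty will be the reverse inclusion, which amounts to verifying closure of $R_+(S)$ under certain products.

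I would first verify that $R_+(\OO_K)$ is a ring and contained in $R_+(S)$. Writing $K$ as the directed union of its finite-dimensional subfields $L$, one has $R_+(\OO_K) = \bigcup_L R_+(\OO_L)$; for each $L$, combining Proposition \ref{ring-of-integers} with Lemma \ref{binomial-coeffs-of-binomial-coeffs-1} yields $R_+(\OO_L) = R_+(\beta_L)$ for any primitive element $\beta_L \in \OO_L$ of $L/\Q$, so $R_+(\OO_K)$ is a directed union of rings and hence a ring. For $R_+(\OO_K) \subset R_+(S)$, I would use that $K$ is the field of fractions of $S$ to enlarge any such $L$ to some $L' \subset K$ admitting a generator $\beta \in S \cap \OO_{L'}$, constructed by picking a finite $S$-generating set for $L'$, rescaling each element by an integer to land in $\OO_K$, and taking a primitive integer linear combination; whence $R_+(\OO_L) \subset R_+(\OO_{L'}) = R_+(\beta) \subset R_+(S)$. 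The forward inclusion $R_+(S) \subset R_+(\OO_K)[S]$ is immediate from Proposition \ref{alg-num-to-alg-int}: for each $s \in S$, $R_+(s) = R_+(\OO_{\Q(s)})[s] \subset R_+(\OO_K)[S]$.

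The main task is the reverse inclusion $R_+(\OO_K)[S] \subset R_+(S)$. Since $S$ is closed under products, it suffices to show $\gamma t \in R_+(S)$ for each $\gamma \in R_+(\OO_K)$ and $t \in S$, and the strategy is to produce a single $s \in S$ with $\gamma, t \in R_+(s)$, so that $\gamma t \in R_+(s) \subset R_+(S)$ by Corollary \ref{its-a-ring}. I would pick a finite-dimensional $L \subset K$ containing $\gamma, t$ and admitting a generator $\beta \in S \cap \OO_L$ from the construction above, and set $s = t + N\beta \in S$ with an integer $N$ chosen so that $\Q(s) = L$ (possible for all but finitely many $N$ by the primitive element theorem; the case $L = \Q$ is handled directly via $\gamma \in \Z$ and $\gamma t \in S \subset R_+(S)$). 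Then $s$ is not a nonnegative integer, and Theorem \ref{exact-description-for-all-alg-nums-valuation} describes $R_+(s)$ in terms of valuations. For every $\pp \in \PP_p'(s)$, the condition $v_\pp(s) \geq 0$ together with $v_\pp(N\beta) \geq 0$ (since $\beta \in \OO_L$) forces $v_\pp(t) = v_\pp(s - N\beta) \geq 0$, giving $t \in R_+(s)$; and since such $\pp$ has ramification and residue degree $1$ over $(p)$ with $v_\pp(\beta) \geq 0$ automatic, $\pp \in \PP_p'(\beta)$, so $\gamma \in R_+(\beta) = R_+(\OO_L)$ yields $v_\pp(\gamma) \geq 0$. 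The crucial obstacle is engineering this single $s$: the shift by $N\beta$ ensures that the ``bad'' primes of $s$ are exactly those of $t$, excluding from $\PP_p'(s)$ precisely the primes where $v_\pp(t) < 0$, while the algebraic-integer generator $\beta$ of $L$ controls $\gamma$'s valuations cleanly via the valuation description applied to $\beta$.
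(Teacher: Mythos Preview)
Your proposal is correct and follows essentially the same route as the paper. Both arguments reduce the reverse inclusion to showing that $\gamma t \in R_+(S)$ for $\gamma \in R_+(\OO_L)$ and $t \in S$, and both accomplish this by producing a single element $\sigma = t + N\cdot v \in S$ (with $v \in S \cap \OO_L$ and $N$ chosen via the primitive element theorem so that $\Q(\sigma)=L$) whose $R_+$ contains $\gamma t$. The only cosmetic difference is in the verification: the paper observes directly that $R_+(\sigma) = R_+(\OO_L)[\sigma] = R_+(\OO_L)[t]$ (since the shift $Nv$ lies in $\OO_L$), whereas you unpack the same fact through Theorem~\ref{exact-description-for-all-alg-nums-valuation} by checking that every $\pp \in \PP_p'(\sigma)$ already lies in $\PP_p(\beta)$. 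One small point worth tightening: when you pick $L$, you need $\gamma \in R_+(\OO_L)$, not merely $\gamma \in L$; this is immediate from your directed-union description $R_+(\OO_K)=\bigcup_L R_+(\OO_L)$, but should be stated when choosing $L$.
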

\begin{proof}
First, any element of $K$ can be expressed as a finite linear combination with rational coefficients of elements of $S.$ Now, take any element $k\in\OO_K.$ Then there exists a positive integer $n$ so that the expression of $nk$ as a linear combination of elements of $S$ has all integral coefficients (multiply through by the common denominator of the rational coefficients). Hence, $nk\in S.$ But then $$
k\in\OO_{\Q(nk)}\subset R_+(nk)\subset R_+(S).
$$
Now, $k$ was chosen as an arbitrary element of $\OO_K,$ so $\OO_K\subset R_+(S),$ as desired.

By Proposition \ref{alg-num-to-alg-int}, for any $s\in S,$ $R_+(s)=R_+(\OO_{\Q(s)})[s]\subset R_+(\OO_K)[S].$ Hence, $R_+(S)\subset R_+(\OO_K)[S]$ (since it consists of linear combinations of elements of $R_+(s)).$ It remains to show that $R_+(S)$ is a ring, as then it must contain $R_+(\OO_K)[S].$ 

First, since $S$ is a ring, $-1\in S\subset R_+(S).$ Hence, $R_+(S)$ is closed under subtraction. Thus, it is sufficient to show that the product of any two binomial coefficients of elements of $S,$ $\binom{r}{k}\binom{s}{\ell}\in R_+(S)$ (since the product of any two elements in $R_+(S)$ is expressible as a sum of products of binomial coefficients). Since $\binom{r}{k}\in R_+(r)$ and $\binom{\a}{k}\in R_+(s),$ it suffices to show that the product of any two elements from $R_+(r)$ and $R_+(s)$ is contained in $R_+(S).$

Note that $R_+(r)=R_+(\OO_{\Q(r)})[r]$ and $R_+(s)=R_+(\OO_{\Q(s)})[s].$ In particular, any element of $R_+(r)$ is of the form $a_0+a_1r+\dots+a_nr^n,$ where $a_i\in R_+(\OO_{\Q(r)}),$ and any element of $R_+(s)$ is of the form $b_0+b_1s+\dots+b_ms^m,$ where $b_j\in R_+(\OO_{\Q(s)}).$ But then the product of these two elements is of the form $\sum\limits_{i=0}^n\sum\limits_{j=0}^ma_ib_jr^is^j.$ It thus suffices to show that $a_ib_jr^is^j\in R_+(S).$ Set $r^is^j=t\in\Z[r,s]\subset S,$ $a_ib_j=c\in R_+(\OO_{\Q(r,s)}).$ Take a generator (primitive element) $u$ of the field $\Q(r,s).$ Then consider an integer $n$ such that $nu\in\Z[r,s]\subset S$ and an integer $m$ such that $mu\in\OO_{\Q(r,s)}$ (these are sure to exist). Then note that $mnu\in S\cap\OO_{\Q(r,s)}.$ Set $mnu=v.$ Then there exists some element of the form $pv+t,$ $p\in\Z,$ that generates all of $\Q(r,s)$ (otherwise, $\Q(r,s)$ has infinitely many subfields). Note that $pv+t\in S$ (since both $v$ and $t$ are) and $pv\in\OO_{\Q(r,s)}.$ But then $$
R_+(pv+t)=R_+(\OO_{\Q(r,s)})[pv+t]=R_+(\OO_{\Q(r,s)})[t].
$$
Hence, $ct\in R_+(pv+t)\subset R_+(S),$ as desired.
\end{proof}


\subsection{The structure of \texorpdfstring{$R_+(\a)$}{} and Deligne categories over p-adic numbers} \label{p-adic-functor-section}

In this section we show how the description of $R_+(\a)$ obtained in Theorem \ref{exact-description-for-all-alg-nums-valuation} relates to the theory of Deligne categories in positive characteristic. More explicitly, we show that there should be a connection between the existence of a symmetric monoidal functor between Deligne categories over $\mathbb C$ and the existence of a symmetric monoidal functor between Deligne categories in positive characteristic. The results of this section explain why the description of $R_+(\a)$ we obtained is natural and in fact is to be expected.

We begin by reformulating Theorem \ref{exact-description-for-all-alg-nums-valuation} in a slightly different form. In order to do this we first need to formulate a well-known number-theoretic result.
\begin{prop} [Section 8 of Chapter 2 of \cite{alg-number-theory-neukirch}] \label{homomorphism-prop}
For any algebraic number $\alpha$ and prime number $p,$ consider the corresponding ring of integers $\OO_{\Q(\a)}$ and its prime ideals over $p.$ There is a one-to-one correspondence between these ideals and field embeddings $\Q(\a) \to \overline{\Q_p}$. Moreover, in this correspondence, prime ideals $\mathfrak{p}$ with ramification index $1$ and such that $\OO_{\Q(\a)}/\mathfrak{p} = \mathbb F_p$ correspond to maps $\Q(\a) \to \overline{\Q_p}$ that factor through $\Q_p$.
\end{prop}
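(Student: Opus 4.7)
The plan is to build on Proposition \ref{correspondence-between-p-adic-fields}, which already establishes a bijection between the prime ideals $\mathfrak{p}$ of $\OO_{\Q(\a)}$ lying over $(p)$ and the irreducible factors $g_i(x)$ of the minimal polynomial $m_\a$ in $\Q_p[x]$. The goal is then to repackage this correspondence in terms of embeddings into $\overline{\Q_p}$, and to identify which prime ideals give rise to embeddings that land inside $\Q_p$.

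First, I would observe that an embedding $\varphi : \Q(\a) \hookrightarrow \overline{\Q_p}$ is completely determined by $\varphi(\a)$, which must be a root of $m_\a$ in $\overline{\Q_p}$. The roots of $m_\a$ in $\overline{\Q_p}$ partition into $\Gal(\overline{\Q_p}/\Q_p)$-orbits, and these orbits are exactly the root sets of the irreducible factors $g_i(x) \in \Q_p[x]$. Composing with Proposition \ref{correspondence-between-p-adic-fields}, I obtain the asserted bijection between prime ideals $\mathfrak{p}_i$ and $\Gal(\overline{\Q_p}/\Q_p)$-orbits of embeddings $\Q(\a) \hookrightarrow \overline{\Q_p}$ (which is the sense in which the one-to-one correspondence should be read: embeddings differing by a $\Q_p$-automorphism of $\overline{\Q_p}$ produce the same valuation on $\Q(\a)$ and are therefore identified).

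For the second assertion, I would invoke the classical $efg$-relation: if $g_i$ has degree $d_i$, then $d_i = e_{\mathfrak{p}_i} f_{\mathfrak{p}_i}$, where $e_{\mathfrak{p}_i}$ is the ramification index and $f_{\mathfrak{p}_i} = [\OO_{\Q(\a)}/\mathfrak{p}_i : \mathbb{F}_p]$ is the residue degree. The hypotheses $e_{\mathfrak{p}} = 1$ and $\OO_{\Q(\a)}/\mathfrak{p} = \mathbb{F}_p$ together force $d_i = 1$, i.e., $g_i(x) = x - z_i$ is linear with some root $z_i \in \Q_p$. The corresponding embedding then sends $\a \mapsto z_i$ and factors through the inclusion $\Q_p \hookrightarrow \overline{\Q_p}$. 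Conversely, if an embedding factors through $\Q_p$, then $\varphi(\a) \in \Q_p$ is a root of the associated factor $g_i$, which forces $g_i$ to be linear and hence $e_{\mathfrak{p}_i} f_{\mathfrak{p}_i} = 1$.

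The main subtlety, rather than obstacle, lies in the reading of "one-to-one correspondence" with embeddings: taken literally, each prime ideal with datum $(e_\mathfrak{p}, f_\mathfrak{p})$ accounts for $e_\mathfrak{p} f_\mathfrak{p}$ distinct embeddings rather than one, so the statement must be interpreted up to the action of $\Gal(\overline{\Q_p}/\Q_p)$ (equivalently, up to postcomposition by a $\Q_p$-automorphism of $\overline{\Q_p}$). Once this convention is fixed, the proof is simply an unpacking of Proposition \ref{correspondence-between-p-adic-fields} combined with the $efg$-relation, and both directions fall out immediately.
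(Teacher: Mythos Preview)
Your argument is correct and follows essentially the same route as the paper: both reduce to Proposition~\ref{correspondence-between-p-adic-fields} (primes $\leftrightarrow$ irreducible factors $g_i$ of $m_\a$ over $\Q_p$) and then invoke $\deg g_i = e_{\mathfrak{p}_i} f_{\mathfrak{p}_i}$ to characterise the linear factors. The paper phrases the embedding via the completion $\OO_{\Q(\a)} \to \varprojlim \OO_{\Q(\a)}/\mathfrak{p}^k \simeq \Q_p[x]/g(x) \hookrightarrow \overline{\Q_p}$, whereas you phrase it directly as ``send $\a$ to a root of $g_i$ in $\overline{\Q_p}$''; these are the same map. Your explicit remark that the bijection must be read up to the $\Gal(\overline{\Q_p}/\Q_p)$-action is a genuine clarification that the paper leaves implicit (it silently picks one embedding of $\Q_p[x]/g(x)$ into $\overline{\Q_p}$), and is worth keeping.
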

\begin{proof}
The full proof can be found in \cite{alg-number-theory-neukirch}. Here we only wish to note that the embedding corresponding to the ideal $\mathfrak{p}$ over $p$ has the following form. Denote by $g(x)$ the irreducible factor of the minimal polynomial of the integral part of $\alpha$ over $\Q_p$ that corresponds to $\mathfrak{p}$ (see Proposition \ref{correspondence-between-p-adic-fields}), then the corresponding map $\phi_{\mathfrak{p}}$ is the extension of the map
$$
 \OO_{\Q(\a)} \to \left(\underset{\leftarrow}{\lim}\OO_{\Q(\a)} /(\pp)^k\right)\left[\frac{1}{p}\right] \simeq\Q_p[x]/g(x) \to \overline{\Q_p} .
$$
to $\Q(\a)$. From this construction it is straightforward to see that in the case of linear $g$ (or equivalently $\mathfrak{p}$ of ramification index $1$ and with quotient field of cardinality $p$)  $\phi_{\mathfrak{p}}$ factors through $\Q_p$.
\end{proof}
If $\mathfrak{p}$ is a prime ideal of $\mathcal O_{\Q(\a)},$ we denote the corresponding field embedding by $\phi_{\mathfrak{p}}$. Also note that from Proposition \ref{v_P-to-v_p} we know that $v_{\mathfrak{p}} = v_{\mathfrak{p}}(p) \cdot (v_p \circ \phi_{\mathfrak{p}})$. With this we can reformulate our main result in the following way.
\begin{theorem} \label{another-alternative-theorem}
For any algebraic number $\alpha$ which is not a nonnegative integer, the subring $R_+(\alpha) \subset \Q(\a)$ is given by the elements $\g\in \Q(\a)$ such that, for any prime number $p$ and for any field embedding $\phi:\Q(\a)\to \Q_p$ such that $v_p(\phi(\alpha))\ge 0,$ we have $v_p(\phi(\g)) \ge 0$. 
\end{theorem}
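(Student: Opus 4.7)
The plan is to observe that Theorem \ref{another-alternative-theorem} is a direct translation of Theorem \ref{exact-description-for-all-alg-nums-valuation} through the dictionary established by Propositions \ref{homomorphism-prop} and \ref{v_P-to-v_p}. Concretely, the set $\PP_p'$ of prime ideals appearing in Definition \ref{P_p-updated} should be put in bijection with the set of field embeddings $\phi: \Q(\a) \to \Q_p$ satisfying $v_p(\phi(\a)) \ge 0$, and once this is done the two valuation conditions match term-by-term.

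First I would note that $\Q(\a) = \Q(\b)$, where $\b$ is the integral part of $\a$, so $\OO_{\Q(\a)} = \OO_{\Q(\b)}$ and Definition \ref{P_p-updated} can be read entirely inside $\OO_{\Q(\a)}$. Next, I would invoke Proposition \ref{homomorphism-prop}: the prime ideals $\pp$ of $\OO_{\Q(\a)}$ over $(p)$ with ramification index $1$ and residue field $\F_p$ are exactly the ones whose associated embedding $\phi_\pp: \Q(\a)\to\overline{\Q_p}$ factors through $\Q_p$. By Proposition \ref{correspondence-between-p-adic-fields}, this is the same as saying that the corresponding irreducible factor of $m_\b$ in $\Q_p[x]$ is linear, which is the defining property of the ideals in $\PP_p'$ apart from the condition $v_\pp(\a)\ge 0$.

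To handle that last condition, I would use the formula $v_{\pp} = v_{\pp}(p)\cdot (v_p\circ \phi_{\pp})$ from Proposition \ref{v_P-to-v_p}. Because $v_\pp(p) > 0$, the inequalities $v_\pp(\a)\ge 0$ and $v_p(\phi_\pp(\a))\ge 0$ are equivalent, and the same equivalence holds for any $\g\in\Q(\a)$. Thus $\pp \in \PP_p'$ if and only if $\phi_\pp$ factors through $\Q_p$ and $v_p(\phi_\pp(\a))\ge 0$, and moreover $v_\pp(\g)\ge 0$ for every $\pp\in\PP_p'$ if and only if $v_p(\phi(\g))\ge 0$ for every embedding $\phi: \Q(\a)\to \Q_p$ with $v_p(\phi(\a))\ge 0$. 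Varying $p$ over all primes and combining with Theorem \ref{exact-description-for-all-alg-nums-valuation} gives exactly the statement of Theorem \ref{another-alternative-theorem}.

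There is no genuine obstacle here, since the work has already been done in the earlier propositions; the only care required is bookkeeping, namely checking that the bijection of Proposition \ref{homomorphism-prop} really matches the \emph{set} $\PP_p'$ (not merely its elements individually) with the \emph{set} of embeddings into $\Q_p$, and that embeddings into $\overline{\Q_p}$ not factoring through $\Q_p$ (i.e., those attached to nonlinear factors of $m_\b$) impose no additional constraints on $R_+(\a)$ — which is exactly the content of Lemma \ref{nonlinear-is-removed}, already used to prove Theorem \ref{exact-description-for-all-alg-nums-valuation}.
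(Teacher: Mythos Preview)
Your proposal is correct and follows essentially the same approach as the paper's own proof: both argue that Proposition \ref{homomorphism-prop} (together with the valuation formula from Proposition \ref{v_P-to-v_p}) sets up a bijection between the prime ideals in $\PP_p'$ and the embeddings $\Q(\a)\to\Q_p$ with $v_p(\phi(\a))\ge 0$, reducing the statement to Theorem \ref{exact-description-for-all-alg-nums-valuation}. Your version is slightly more explicit about the bookkeeping (e.g., noting $\Q(\a)=\Q(\b)$ and addressing why non-$\Q_p$-valued embeddings impose no extra constraints), but the substance is the same.
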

\begin{proof}
Indeed, from Proposition \ref{homomorphism-prop} we know that, for any prime number $p,$ embeddings $\Q(\a)\to \Q_p$ correspond to prime ideals $\mathfrak{p}$ of $\OO_{\Q(\a)}$ such that the corresponding irreducible factor of $m_\beta$ ($\b$ is the integral part of $\a$) is linear. Now, the condition $v_p(\phi_{\mathfrak{p}}(\a))\ge 0$ is the same as the condition $v_{\mathfrak{p}}(\a)\ge 0$ for the corresponding ideal. Hence we can see that the set of all such field embeddings is in one-to-one correspondence with the set of prime ideals $\mathcal P_p'$ from Definition \ref{P_p-updated}. But then the condition on $\g$ from the statement of the Theorem is exactly the same as the condition that $v_{\mathfrak{p}}(\g)\ge 0$ for all $\mathfrak{p} \in \mathcal P_p'$. This result is therefore equivalent to Theorem \ref{exact-description-for-all-alg-nums-valuation}.
\end{proof}
We now apply this result to the study of Deligne categories. Namely, we discuss symmetric monoidal functors between Deligne categories in positive characteristic. In one of his letters to Ostrik, Deligne delineated how through a certain ultrafilter construction one can obtain a class of Deligne categories such that the ``rank'' of these categories turns out to be a $p$-adic integer. We will denote such a category by $\Rep_p(S_t)$, where $t \in \Z_p$, for our present discussion. A more detailed discussion of how these categories can be constructed is to be found in \cite{harman2015stability,etingof2018p}. However the knowledge of the properties of these categories is still quite limited. Hence the discussion below has a provisional character.

\begin{remark} \label{remark-p-char-deligne}
Suppose we have $\a,\g \in \overline{\Q}$ such that $\g \in R_+(\a),$ and so we have a symmetric monoidal functor from $\Rep(S_\g) $ to $\Rep(S_\a)$. Now, such a functor is given by an object $V$ of dimension $\g$ in $\Rep(S_\a)$ with the structure of a commutative Frobenius algebra\footnote{Here and below all Frobenius algebras are required to have a non-degenerate trace form.}; from \cite{classify-symmetric-monoidal-functors} we know that this object is given by a direct sum of simple commutative Frobenius algebras that interpolate algebras $\Ind_{S_{N-k}\times H}^{S_N}(\mathbb C)$ for certain values of $k$ and $H \subset S_k$. 

Now say that $\phi$ is a field embedding $\phi:\Q(\a) \to \Q_p$ for some prime number $p$ such that $\phi(\a) \in \Z_p$. It follows that the category $\Rep_p(S_{\phi(\a)})$ exists. Hence we can construct an object $V_p \in \Rep_p(S_{\phi(\a)})$ as a direct sum of the same simple commutative Frobenius algebras that constitute $V,$ reduced to characteristic $p.$\footnote{In other words, if one of the simple commutative Frobenius algebras constituting $V$ interpolates $\Ind_{S_{N-k}\times H}^{S_N}(\mathbb C)$, then the corresponding simple algebra in $V_p$ interpolates $\Ind_{S_{N-k}\times H}^{S_N}(\mathbb F_p)$.}

Now, in the setting of symmetric tensor categories in characteristic $p,$ one can define the notion of $p$-adic dimension, which we will here denote as $\Dim.$ The usual categorical dimension, which in this case takes values in $\F_p$, gives us the first $p$-adic digit of $\Dim.$\footnote{The other digits can be computed via the categorical dimensions of the exterior powers of the given object. See \cite{etingof2018p} for details.} It is conjectured that in  characteristic $p$ there is an analogue of the universal property for Deligne categories that would state that  symmetric monoidal functors from the Deligne category $\Rep_p(S_t)$ to another symmetric tensor category $\mathcal T$ in characteristic $p$, under certain conditions, correspond bijectively to commutative Frobenius algebras in $\mathcal T$ with $p$-adic dimension $t$.

Assuming the truth of this conjecture, we can conclude that the object $V_p \in \Rep_p(S_{\phi(\a)})$ gives us a symmetric monoidal functor $\Rep_p(S_{\Dim(V_p)}) \to \Rep_p(S_{\phi(\a)})$. But then, since $\phi$ is a field embedding, and $V_p$ is a reduction of $V$ in characteristic $p$, we conclude that the dimension of $V_p$ has to be $\Dim(V_p) = \phi(\g) \in \Z_p$. 

Thus, the fact that the $p$-adic valuation $v_p(\phi(\g)) \ge 0$ for any prime number $p$ and field embedding $\phi:\Q(\a) \to \Q_p$ such that $v_p(\phi(\a))\ge 0$ can be seen as a natural consequence of the fact that a functor between Deligne categories in zero characteristic can be reduced to a functor between Deligne categories in characteristic $p$. 
\end{remark}

\section{Explicit generators for \texorpdfstring{$R_+(\alpha)$}{}}\label{Specifics-Section}
In this section, we show a formula for $R_+(\a)$ as a ring generated by certain, explicitly defined algebraic elements. We assume throughout this section that $R_+(\a)$ is a ring, i.e. $\a$ is not a nonnegative integer.
\subsection{Construction of \texorpdfstring{$\alpha_p$}{}}
Here, we use roots of $m_\a$ in $\Z_p$ to construct an element of $R_+(\a),$ and we examine the scenarios when this element is equal to $\frac{1}{p}.$ We assume that $\a$ is an algebraic integer. For the case of a non-integral algebraic number it is enough to consider its integral part by virtue of Proposition \ref{alg-num-to-alg-int}.

Our goal is to construct, for every prime $p,$ an element $\a_p$ which has a valuation of zero modulo every prime ideal corresponding to a root of $m_\a$ in $\Z_p,$ and a negative valuation modulo every prime ideal corresponding to an irreducible factor of degree $>1.$ Then, we would like to consider the ideal $(\a_p),$ which contains $Q_p^{-1}$ in its factorization, but no negative factors of ideals from $\PP_p$.
\begin{definition}\label{a_p}
For any algebraic integer $\a$ and for any prime $p,$ consider the set of (distinct) roots $z_1,z_2,\dots z_q$ of $m_\a$ in $\Z_p,$ and consider the smallest positive integer $k$ such that they are all distinct modulo $p^k$ (such a $k$ exists because the $z_i$ are distinct). For each $i,$ define $k_i=\sum\limits_{j\neq i}v_p(z_j-z_i),$ and define $K=\max\limits_ik_i.$ For each $i,$ let $m_i=K-k_i+k.$ Finally, by Lemma \ref{min-dist-Q_p}, the number $\max\limits_{z\in\Z_p}v_p(z-y)$ exists and is achievable for any number $y$ which is algebraic over $\Q_p.$ Therefore, we can consider the integer $N=\lceil\max\limits_j\max\limits_{z\in\Z_p}v_p(z-y_j)\rceil,$ where $y_j$ are the roots over $\Z_p$ of the irreducible factors of $m_\a.$ Then for each $i$ choose an integer $r_i$ such that $z_i$ is equivalent to $r_i$ modulo $p^{m_i+qN}$ but not modulo $p^{m_i+qN+1}.$ We then define $$
\a_p=\frac{(\a-r_1)(\a-r_2)\cdots(\a-r_q)}{p^{K+k+qN}}.
$$
\end{definition}
Note that $\a_p$ is a modified version of $\binom{\a}{p^\ell}$ for some $\ell,$ with a denominator consisting only of the factors of $p$ and some factors $(\a-i)$ removed. Also, if $m_\a$ has no roots in $\Z_p,$ then $k=1$ and $\a_p=\frac{1}{p}.$
\begin{lemma}\label{inclusion-of-a_p}
For any prime $p,$ the element $\a_p\in R_+(\a).$
\end{lemma}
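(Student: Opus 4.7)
The plan is to apply Theorem \ref{valuation-description-for-alg-integers}, which identifies $R_+(\a)$ with the set of $\gamma \in \Q(\a)$ satisfying $v_\qq(\gamma) \geq 0$ for every $\qq \in \PP_q$ and every prime $q$. So the task reduces to checking this finite list of valuation conditions on $\a_p$. For any prime ideal $\qq$ over a prime $q \neq p$, we have $v_\qq(p) = 0$ and each $\a - r_j \in \OO_{\Q(\a)}$, so $v_\qq(\a_p) \geq 0$ is immediate; the substantive case is $\pp \in \PP_p$.

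For $\pp \in \PP_p$, let $z_i \in \Z_p$ be the corresponding root of $m_\a$ (arising from the linear factor of $m_\a$ in $\Q_p[x]$). Linearity forces both the ramification index and the residue degree of $\pp$ to be $1$, so $v_\pp(p) = 1$, and Proposition \ref{v_P-to-v_p} gives $v_\pp(\a - s) = v_p(z_i - s)$ for every $s \in \Z$. The choice of $r_i$ then yields $v_\pp(\a - r_i) = v_p(z_i - r_i) = m_i + qN$. For $j \neq i$, combining $v_p(z_j - r_j) \geq m_j + qN \geq k + qN$ with $v_p(z_i - z_j) \leq k - 1$ (the latter from the minimality of $k$) and applying the ultrametric inequality forces $v_\pp(\a - r_j) = v_p(z_i - z_j)$. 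Summing, $v_\pp\bigl((\a - r_1) \cdots (\a - r_q)\bigr) = (m_i + qN) + \sum_{j \neq i} v_p(z_i - z_j) = (K - k_i + k + qN) + k_i = K + k + qN$, which exactly cancels $v_\pp(p^{K+k+qN})$, giving $v_\pp(\a_p) = 0$. Every valuation condition from Theorem \ref{valuation-description-for-alg-integers} is thus satisfied, so $\a_p \in R_+(\a)$.

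The main obstacle is essentially a bookkeeping step: the definitions $K = \max_i k_i$ and $m_i = K - k_i + k$ are calibrated precisely so that $m_i + \sum_{j \neq i} v_p(z_i - z_j)$ evaluates to the same quantity $K + k$ independently of $i$, allowing a single denominator $p^{K+k+qN}$ to cancel the numerator valuation uniformly at every $\pp \in \PP_p$. The $qN$ term is cosmetic for the present lemma (any non-negative exponent would do), but it is positioned to play the essential role in the companion statement that $\a_p$ realizes $Q_p^{-1}$ via strictly negative valuations at the prime ideals associated to the nonlinear factors of $m_\a$, where $N$ is exactly the minimal $p$-adic distance from those irrational roots to $\Q_p$ provided by Lemma \ref{min-dist-Q_p}.
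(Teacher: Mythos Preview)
Your proof is correct and follows essentially the same approach as the paper's: reduce to the valuation criterion of Theorem~\ref{valuation-description-for-alg-integers}, dispose of primes $q\neq p$ trivially, and for each $\pp_i\in\PP_p$ compute $v_{\pp_i}(\a_p)=0$ via Proposition~\ref{v_P-to-v_p} and the ultrametric identity $v_p(z_i-r_j)=v_p(z_i-z_j)$ for $j\neq i$. Your justification of that last identity (bounding $v_p(z_j-r_j)\geq k$ and $v_p(z_i-z_j)\leq k-1$ separately before invoking the ultrametric) is in fact slightly more explicit than the paper's.
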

\begin{proof}
We use the valuation version of the main result of Section \ref{General-Section}, i.e. Theorem \ref{exact-description-for-all-alg-nums-valuation}. First, the only ideals with possibly negative valuation of $\a_p$ are those in the factorization of $(p)$ (since the denominator of $\a_p$ is a power of $p).$ Hence, we only need to consider the ideals in the factorization of $(p),$ and moreover only those corresponding to linear factors of $m_\a$ in $\Z_p,$ as the other ideals' inverses are contained in $R_+(\a).$ In other words, we consider ideals in $\PP_p.$

Say $\pp_i$ is the ideal corresponding to the root $r_i\in\Z_p.$ We wish to show that $v_{\pp_i}(\a_p)\geq0.$ Now, we have $$
v_{\pp_i}(\a-r_i)=v_p(z_i-r_i)=m_i+qN
$$
since $z_i-r_i$ is divisible by $p^{m_i+qN}$ but not by $p^{m_i+qN+1}.$ In addition, $$
v_{\pp_i}(\a-r_j)=v_p(z_i-r_j)=v_p(z_i-z_j)
$$
because $v_p(r_j-z_j)>v_p(z_i-r_j),$ since $z_i$ are distinct modulo $p^k.$ Therefore, we have $$
v_{\pp_i}(\a_p)=v_{\pp_i}(\a-r_i)+\sum_{j\neq i}v_{\pp_i}(\a-r_j)-v_{\pp_i}(p^{K+k+qN})=m_i+qN+k_i-(K+k+qN)=0
$$
by definition of $m_i$. Hence, since $\pp_i$ was chosen arbitrarily, $v_\pp(\a_p)=0$ for all $\pp\in\PP_p,$ so by Theorem \ref{exact-description-for-all-alg-nums-valuation}, $\a_p\in R_+(\a),$ as desired.
\end{proof}
Also note that in the case when $m_\a$ has no roots on $\Z_p$, $\PP_p$ is empty, so we have nothing to check and $\alpha_p =\frac{1}{p}$ automatically lies in $R_+(\a)$.
In particular, we obtain a strong result about the inclusion of the inverses of many primes in $R_+(\a).$
\begin{corollary}\label{inverses-of-primes-in}
For any algebraic number $\a,$ if $m_\a(x)$ has no roots in $\Z_p,$ then $\frac{1}{p}\in R_+(\a).$ 
\end{corollary}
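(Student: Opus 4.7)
The plan is to handle the integer case first by direct appeal to the preceding lemma, and then reduce the general case to a valuation check via Theorem \ref{exact-description-for-all-alg-nums-valuation}.

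\textbf{Algebraic integer case.} When $\a$ is an algebraic integer, the corollary is essentially a corollary in name only: the hypothesis says the list of roots $z_1,\dots,z_q$ appearing in Definition \ref{a_p} is empty, so $q=0$ and the smallest integer $k$ for which the (empty!) list is distinct modulo $p^k$ is $k=1$. The exponent $K+k+qN$ collapses to $0+1+0=1$, the product in the numerator is the empty product $1$, and therefore $\a_p=1/p$. Lemma \ref{inclusion-of-a_p} then gives $1/p=\a_p\in R_+(\a)$.

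\textbf{General algebraic number.} For an arbitrary algebraic number $\a$, let $\b$ be its integral part and $\d$ its denominator, so $\a=\b/\d$ and $m_\a(x)=m_\b(\d x)/\d^n$ with $n=\deg m_\b$. By Proposition \ref{alg-num-to-alg-int}, $R_+(\a)=R_+(\OO_{\Q(\a)})[\a]$, and by Theorem \ref{exact-description-for-all-alg-nums-valuation} this ring equals the set of $\g\in\Q(\a)$ with $v_\pp(\g)\ge 0$ for every prime $p$ and every $\pp\in\PP_p'$. For a prime $p'\neq p$, $v_{\pp'}(1/p)=0$ automatically, so the only condition to verify is that there is no ideal $\pp\in\PP_p'$ at all: since $v_\pp(1/p)=-v_\pp(p)<0$, the element $1/p$ lies in $R_+(\a)$ precisely when $\PP_p'=\emptyset$.

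\textbf{The correspondence that closes the argument.} It remains to show that $\PP_p'=\emptyset$ is equivalent to $m_\a$ having no roots in $\Z_p$. The roots of $m_\a$ in $\Q_p$ are exactly $\{w/\d:w\in\Q_p,\ m_\b(w)=0\}$, so $m_\a$ has a root in $\Z_p$ iff some root $w$ of $m_\b$ in $\Z_p$ satisfies $v_p(w)\ge v_p(\d)$. By Proposition \ref{correspondence-between-p-adic-fields}, roots of $m_\b$ in $\Z_p$ are in bijection with the prime ideals $\pp\subset\OO_{\Q(\b)}$ above $p$ that correspond to linear factors of $m_\b$ in $\Q_p$; by Proposition \ref{v_P-to-v_p}, and because the ramification index of such a $\pp$ is $1$, we have $v_\pp(\b)=v_p(w)$ and $v_\pp(\d)=v_p(\d)$, hence $v_\pp(\a)=v_p(w)-v_p(\d)$. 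Thus the root $w$ satisfies $v_p(w)\ge v_p(\d)$ iff $v_\pp(\a)\ge 0$, which is precisely the condition defining $\PP_p'$ in Definition \ref{P_p-updated}. Therefore $\PP_p'=\emptyset$ iff $m_\a$ has no root in $\Z_p$, finishing the proof.

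The main conceptual content has already been done in Lemma \ref{inclusion-of-a_p} and Theorem \ref{exact-description-for-all-alg-nums-valuation}; the only subtle point is this last bookkeeping step comparing roots of $m_\a$ with roots of $m_\b$ after the rescaling by $\d$, where one has to be careful that a root of $m_\b$ in $\Z_p$ need \emph{not} produce a root of $m_\a$ in $\Z_p$ when $p\mid\d$.
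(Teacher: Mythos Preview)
Your proof is correct. For algebraic integers it coincides with the paper's one-line argument (the empty product gives $\a_p=1/p$ and Lemma \ref{inclusion-of-a_p} finishes). For general algebraic numbers you go further than the paper: the paper's proof simply invokes Lemma \ref{inclusion-of-a_p}, relying on the blanket statement at the start of Section \ref{Specifics-Section} that non-integral $\a$ reduce to their integral part via Proposition \ref{alg-num-to-alg-int}. But that reduction is not transparent here, since the hypothesis ``$m_\a$ has no roots in $\Z_p$'' does \emph{not} imply ``$m_\b$ has no roots in $\Z_p$'' when $p\mid\d$. Your route through Theorem \ref{exact-description-for-all-alg-nums-valuation}, checking that $\PP_p'=\emptyset$ exactly when $m_\a$ has no $\Z_p$-root via the rescaling $w\mapsto w/\d$, is the honest way to close this gap. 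The bookkeeping you flag at the end---that a $\Z_p$-root of $m_\b$ with $v_p(w)<v_p(\d)$ gives a $\Q_p$-root of $m_\a$ outside $\Z_p$, hence a $\pp$ with $v_\pp(\a)<0$ that drops out of $\PP_p'$---is exactly the point.
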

\begin{proof}
This immediately follows from Lemma \ref{inclusion-of-a_p} by noting that $\a_p=\frac{1}{p}.$
\end{proof}
In fact, Corollary \ref{inverses-of-primes-in} proves the inclusion of an infinitude of inverses of primes in $R_+(\a)$ of nonzero density in the set of all primes, by Chebotarev's Density Theorem \cite{chebotarev-density}.
\subsection{Description of \texorpdfstring{$R_+(\alpha)$}{} in terms of \texorpdfstring{$\alpha_p$}{}}
In the previous subsection, we defined an element $\a_p$ and proved that it is contained in $R_+(\a).$ Here, we prove a ``converse": that $\OO_{\Q(\a)}[\a_p]$ encompasses almost all of $R_+(\a).$ This leads to an explicit description of $R_+(\a)$ in terms of generators.
\begin{lemma}\label{negative-valuation-of-nonlinear-ideals}
For any prime ideal $\pp$ corresponding to an irreducible nonlinear factor of $m_\a$ in $\Z_p,$ the valuation $v_\pp(\a_p)<0.$
\end{lemma}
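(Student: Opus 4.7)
The plan is to compute $v_\pp(\a_p)$ directly by applying Proposition \ref{v_P-to-v_p} to each factor in the numerator of $\a_p$. Let $g$ be the nonlinear irreducible factor of $m_\a$ in $\Q_p[x]$ corresponding to $\pp$, let $y$ be the associated root in the extension $\Q_p(y) \simeq \Q_p[x]/g(x)$, and set $c = v_\pp(p) \ge 1$. Proposition \ref{v_P-to-v_p} gives $v_\pp(\a - r_i) = c \cdot v_p(y - r_i)$ for each $i = 1,\dots,q$.

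The key input is that $y \notin \Q_p$, since $g$ is nonlinear. By Lemma \ref{min-dist-Q_p}, the quantity $\max_{z \in \Z_p} v_p(z - y)$ is finite, and $N$ in Definition \ref{a_p} was defined (taking a ceiling) to be an integer upper bound for this maximum over all roots $y_j$ of the nonlinear factors of $m_\a$. Hence $v_p(y - r_i) \le N$ for every $i$, as each $r_i \in \Z \subset \Z_p$.

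Summing over $i$ and subtracting the valuation of the denominator,
\[
v_\pp(\a_p) = \sum_{i=1}^{q} v_\pp(\a - r_i) - v_\pp\bigl(p^{K+k+qN}\bigr) \le qcN - c(K+k+qN) = -c(K+k),
\]
which is strictly negative since $c \ge 1$, $K \ge 0$, and $k \ge 1$. In the edge case where $m_\a$ has no roots in $\Z_p$ (so $q = 0$), the convention $\a_p = 1/p$ makes $v_\pp(\a_p) = -c < 0$ immediately, and every irreducible factor of $m_\a$ is nonlinear so the statement still holds.

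The main obstacle, such as it is, is just making sure the accounting of exponents matches Definition \ref{a_p} cleanly and that the upper bound $N$ from the ceiling in the definition is genuinely an upper bound for $v_p(y - r_i)$, not merely $v_p(y - z)$ with $z \in \Q_p$; but since $\Z \subset \Z_p \subset \Q_p$ this is automatic, and no further subtlety arises.
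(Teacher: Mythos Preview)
Your proof is correct and is essentially identical to the paper's: both apply Proposition~\ref{v_P-to-v_p} to write $v_\pp(\a-r_i)=c\,v_p(y-r_i)$, bound each $v_p(y-r_i)$ by $N$ from Definition~\ref{a_p}, and obtain $v_\pp(\a_p)\le -c(K+k)<0$. Your explicit treatment of the edge case $q=0$ and the remark about $\Z\subset\Z_p$ are nice clarifying additions not spelled out in the paper.
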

\begin{proof}
Say $y$ is the generator of the algebraic extension of $\Q_p$ given by the irreducible nonlinear factor. Then we have, where $c=v_{\pp}(p)$ is a positive integer, \begin{align*}
v_\pp(\a_p)=\sum_{j=1}^qv_\pp(\a-r_j)-v_\pp(p^{K+k+qN})&=c\left(\sum_{j=1}^qv_p(y-r_j)-(K+k+qN)\right)\\
&\leq c\left(\sum_{j=1}^qN-(K+k+qN)\right)=-c(K+k)<0,
\end{align*}
since $k>0,$ as desired.
\end{proof}

This allows us to see that any element of $R_+(\a)$ can be modified slightly to become an element of $\OO_{\Q(\a)}[\a_p].$
\begin{lemma}\label{multiply-into-a_p}
For any $\b\in R_+(\a),$ there exists an integer $n$ not divisible by $p$ such that $n\b\in\OO_{\Q(\a)}[\a_p].$
\end{lemma}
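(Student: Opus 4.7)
My plan is to describe both $R_+(\a)$ and the overring $\OO_{\Q(\a)}[\a_p]$ in terms of valuations at prime ideals of $\OO_{\Q(\a)}$, and then to observe that the only obstruction to the inclusion $\b \in \OO_{\Q(\a)}[\a_p]$ can be removed by multiplication by an integer whose prime factorization involves only primes different from $p$.

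First, I would use the framework of Subsection \ref{Section-Ideal-Factorization} to describe the ring $\OO_{\Q(\a)}[\a_p]$ explicitly. By Lemma \ref{negative-valuation-of-nonlinear-ideals}, $v_\pp(\a_p)<0$ for every prime ideal $\pp$ over $(p)$ corresponding to a nonlinear irreducible factor of $m_\a$ in $\Q_p[x]$; by construction (as verified inside the proof of Lemma \ref{inclusion-of-a_p}), $v_\pp(\a_p)=0$ for every $\pp\in\PP_p$; and since the denominator of $\a_p$ is a pure power of $p$, we have $v_\qq(\a_p)\geq 0$ for every prime ideal $\qq$ not over $(p)$. Combining Lemma \ref{prime-inverse-in} and Corollary \ref{ring-in-terms-of-prime-valuation} with these valuations, I would conclude that $\OO_{\Q(\a)}[\a_p]$ consists precisely of those $x\in\Q(\a)$ satisfying $v_\qq(x)\geq 0$ for every prime ideal $\qq$ of $\OO_{\Q(\a)}$ \emph{except} possibly those nonlinear ones lying over $(p)$.

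Next, I would combine this description with the valuation characterization of $R_+(\a)$ from Theorem \ref{exact-description-for-all-alg-nums-valuation}. For $\b\in R_+(\a)$, we have $v_\pp(\b)\geq 0$ for every $\pp\in\PP_q$ and every prime $q$ (including $q=p$, which handles the linear factors over $(p)$). Comparing with the description of $\OO_{\Q(\a)}[\a_p]$, the only prime ideals $\qq$ at which $\b$ is permitted to have negative valuation while also obstructing membership in $\OO_{\Q(\a)}[\a_p]$ are the nonlinear prime ideals $\qq$ lying over some prime $q\neq p$; at nonlinear ideals over $(p)$ there is no obstruction, and at linear ideals everywhere $\b$ is already integral.

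Finally, since $\b\in\Q(\a)$, there are only finitely many prime ideals $\qq$ with $v_\qq(\b)<0$. For each prime $q\neq p$ below one of these problematic ideals, I would pick an integer $N_q$ large enough that $v_\qq(q^{N_q}\b)=N_q\,v_\qq(q)+v_\qq(\b)\geq 0$ for every nonlinear $\qq$ over $q$ with $v_\qq(\b)<0$; this is possible because $v_\qq(q)>0$. Taking $n=\prod_{q\neq p}q^{N_q}$, which is a finite product of primes different from $p$ and hence not divisible by $p$, I get $v_\qq(n\b)\geq 0$ for every prime ideal $\qq$ not lying over $(p)$, and $v_\pp(n\b)=v_\pp(\b)\geq 0$ for every $\pp\in\PP_p$ (since $v_\pp(n)=0$). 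By the characterization of $\OO_{\Q(\a)}[\a_p]$ established in the first step, this gives $n\b\in\OO_{\Q(\a)}[\a_p]$, as desired. The main conceptual point, and the only nontrivial step, is the identification of $\OO_{\Q(\a)}[\a_p]$ as a localization away from the nonlinear primes over $(p)$; once that is available, the argument is a routine manipulation of valuations.
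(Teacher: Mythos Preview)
Your proposal is correct and follows essentially the same route as the paper: first identify $\OO_{\Q(\a)}[\a_p]$ via Corollary \ref{ring-in-terms-of-prime-valuation} as the set of elements with nonnegative valuation at every prime except the nonlinear primes over $(p)$, then compare with the valuation description of $R_+(\a)$, and finally clear the remaining obstructions by an integer prime to $p$. The only cosmetic difference is in the choice of $n$: the paper takes $n=\d/p^{v_p(\d)}$ where $\d$ is the denominator of $\b$ (so that $\d\b\in\OO_{\Q(\a)}$ handles all primes over $q\neq p$ at once), whereas you build $n$ prime by prime as $\prod_{q\neq p}q^{N_q}$; both produce an integer not divisible by $p$ that does the job.
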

\begin{proof}
This is simply an application of Corollary \ref{ring-in-terms-of-prime-valuation}. Note that $\OO_{\Q(\a)}[\a_p]$ is a ring containing $\OO_{\Q(\a)},$ so we can combine Lemmas \ref{inclusion-of-a_p} and \ref{negative-valuation-of-nonlinear-ideals} to obtain that $\OO_{\Q(\a)}[\a_p]$ is exactly the set of all elements of $\Q(\a)$ with nonnegative $\qq$-adic valuation for any $\qq\in\PP_p$ or $\qq$ that divide $(q)$ for primes $q\neq p.$ Say $\b\in R_+(\a).$ Then we know, by Theorem \ref{valuation-description-for-alg-integers}, that $v_\pp(\b)\geq0$ for any $\pp\in\PP_p.$ Say $\d$ is the denominator of $\b,$ and then consider the element $\g=\left(\frac{\d}{p^{v_p(\d)}}\right)\b.$ For all prime ideals $\qq$ which are not in the prime factorization of $(p),$ the $\qq$-adic valuation of $\g$ is nonnegative because $\g$ is equal to an algebraic integer $(\d\b)$ divided by a power of $p.$ In addition, for any prime ideals $\pp\in\PP_p,$ $v_\pp(\b)\geq0$ so $v_\pp(\g)\geq0$ as well. Hence, $\g\in\OO_{\Q(\a)}[\a_p].$ But $\left(\frac{\d}{p^{v_p(\d)}}\right)$ is an integer not divisible by $p,$ as desired.
\end{proof}
This leads us to the main, explicit theorem.
\begin{theorem}\label{final-description}
For $\a$ an algebraic integer that is not a nonnegative integer, the ring $$
R_+(\a)=\OO_{\Q(\a)}\left[\a_2,\a_3,\a_5,\a_7,\dots\right].
$$
\end{theorem}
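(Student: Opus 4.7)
The plan is to prove the two inclusions separately. The forward inclusion $\OO_{\Q(\a)}[\a_2, \a_3, \a_5, \dots] \subset R_+(\a)$ is immediate: by Proposition \ref{ring-of-integers} we have $\OO_{\Q(\a)} \subset R_+(\a)$, by Lemma \ref{inclusion-of-a_p} each $\a_p \in R_+(\a)$, and by Corollary \ref{its-a-ring} the semiring $R_+(\a)$ is in fact a ring, so the ring generated by all these elements lies in $R_+(\a)$.

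For the reverse inclusion, fix $\b \in R_+(\a)$ and define the bad set $B(\b) := \{p \text{ prime} : v_\qq(\b) < 0 \text{ for some prime ideal } \qq \text{ of } \OO_{\Q(\a)} \text{ over } p\}$. Since $\b \in \Q(\a)$, only primes dividing the integral denominator of $\b$ can contribute, so $B(\b)$ is finite. I will proceed by induction on $|B(\b)|$. In the base case $|B(\b)| = 0$, every prime ideal $\qq$ of $\OO_{\Q(\a)}$ satisfies $v_\qq(\b) \geq 0$, which forces $\b \in \OO_{\Q(\a)}$ and the conclusion is trivial.

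For the inductive step, assume $|B(\b)| \geq 1$ and pick any $p \in B(\b)$. Lemma \ref{multiply-into-a_p} yields an integer $n$ with $p \nmid n$ and $n\b \in \OO_{\Q(\a)}[\a_p]$. Next, choose a positive integer $v$ large enough that $v \cdot v_\qq(p) + v_\qq(\b) \geq 0$ for every prime ideal $\qq$ over $p$; this guarantees $v_\qq(p^v \b) \geq 0$ for all such $\qq$, so $p$ drops out of the bad set of $p^v \b$. For primes $q \neq p$ and $\qq$ over $q$, multiplication by $p^v$ does not change $v_\qq$, so no new bad primes appear; hence $B(p^v \b) = B(\b) \setminus \{p\}$ has strictly smaller cardinality. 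Since $p^v \in \Z \subset R_+(\a)$ and $R_+(\a)$ is a ring, $p^v \b \in R_+(\a)$, and the inductive hypothesis gives $p^v \b \in \OO_{\Q(\a)}[\a_2, \a_3, \dots]$. By Bezout there exist integers $s, t$ with $sn + tp^v = 1$, so $\b = s(n\b) + t(p^v \b)$ is a sum of two elements of $\OO_{\Q(\a)}[\a_2, \a_3, \dots]$, closing the induction.

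The main obstacle is verifying that the bad set strictly decreases after multiplication by $p^v$; this requires a careful split between prime ideals lying over $p$ (where the valuation rises by $v \cdot v_\qq(p)$) and prime ideals over other primes (where nothing changes). Once this bookkeeping is in place, the argument reduces to combining Lemmas \ref{inclusion-of-a_p} and \ref{multiply-into-a_p} with the Bezout identity to peel off one bad prime at a time.
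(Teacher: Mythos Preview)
Your proof is correct and follows essentially the same approach as the paper: both arguments use Lemma~\ref{multiply-into-a_p} to produce, for each prime $p$, a multiplier coprime to $p$ that pushes $\b$ into $\OO_{\Q(\a)}[\a_p]$, and then combine via Bezout. The only difference is organizational: the paper does a single global Bezout over the denominator $\d$ and all the $n_p$ at once, whereas you unroll this into an induction on the number of bad primes, using $p^v$ in place of $\d$ to peel off one prime at a time.
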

\begin{proof}
The inclusion $\OO_{\Q(\a)}\left[\a_2,\a_3,\dots\right]\subset R_+(\a)$ follows immediately from Lemma \ref{inclusion-of-a_p}. For the opposite inclusion, suppose $\b\in R_+(\a),$ and consider the denominator $\d$ of $\b.$ In particular, $\d\b\in\OO_{\Q(\a)}.$ Fix any prime $p$ dividing the denominator $\d.$ By Lemma \ref{multiply-into-a_p}, there exists some $n_p$ not divisible by $p$ such that $$
n_p\b\in\OO_{\Q(\a)}[\a_p]\subset\OO_{\Q(\a)}\left[\a_2,\a_3,\dots\right]=S.
$$
Then consider the set $T:=\{\d\}\cup\{n_p:p\mid\d\}.$ Note that the set of elements $T\b\subset S.$ None of the primes dividing $\d$ divide all elements of $T,$ hence the set $T$ has greatest common divisor $1.$ Thus, by Bezout's Lemma, some integral linear combination of the elements of $T$ equals $1.$ But then $\b\in S,$ as desired.
\end{proof}
For algebraic numbers $\a,$ we just need to append $\a$: 
\begin{theorem}\label{final-description-all-nums}
For $\a$ an algebraic number with integral part $\b,$ the ring $$
R_+(\a)=\OO_{\Q(\a)}\left[\a,\b_2,\b_3,\b_5,\dots\right].
$$
\end{theorem}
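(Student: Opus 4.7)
The plan is to reduce this theorem to its counterpart for algebraic integers, namely Theorem \ref{final-description}, by passing from $\a$ to its integral part $\b = \d\a$ (with $\d$ the denominator) and then adjoining $\a$. Three facts from earlier in the paper should combine cleanly: (i) $\Q(\a) = \Q(\b)$ since $\d$ is a positive rational integer, so $\OO_{\Q(\a)} = \OO_{\Q(\b)}$; (ii) Proposition \ref{alg-num-to-alg-int}, which gives $R_+(\a) = R_+(\OO_{\Q(\a)})[\a]$; (iii) the already-proved Theorem \ref{final-description} for the algebraic integer $\b$.

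The first step I would take is to establish the identity $R_+(\OO_{\Q(\b)}) = R_+(\b)$. The inclusion $R_+(\b) \subseteq R_+(\OO_{\Q(\b)})$ is immediate from the definition of $R_+$ on a ring. For the reverse, I would invoke Proposition \ref{ring-of-integers} to note that every $s \in \OO_{\Q(\b)}$ already lies in $R_+(\b)$, and then apply Lemma \ref{binomial-coeffs-of-binomial-coeffs} to conclude $R_+(s) \subseteq R_+(\b)$ for every such $s$; summing gives $R_+(\OO_{\Q(\b)}) \subseteq R_+(\b)$.

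Combining this with Proposition \ref{alg-num-to-alg-int} and the identification $\OO_{\Q(\a)} = \OO_{\Q(\b)}$ yields
\[
R_+(\a) \;=\; R_+(\OO_{\Q(\a)})[\a] \;=\; R_+(\OO_{\Q(\b)})[\a] \;=\; R_+(\b)[\a].
\]
Now Theorem \ref{final-description}, applied to the algebraic integer $\b$ (which is not a nonnegative integer, since $\a$ is not), supplies the explicit generators: $R_+(\b) = \OO_{\Q(\b)}[\b_2, \b_3, \b_5, \dots]$. Substituting into the display above and using $\OO_{\Q(\b)} = \OO_{\Q(\a)}$, we obtain $R_+(\a) = \OO_{\Q(\a)}[\a, \b_2, \b_3, \b_5, \dots]$, which is exactly the claim.

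In this strategy there is no single hard step; the novelty lies entirely in Theorem \ref{final-description} and in Proposition \ref{alg-num-to-alg-int}, both of which are in hand. The only mildly delicate point is the first paragraph's verification that $R_+(\OO_K) = R_+(\b)$ when $\b$ generates the field $K$ but not necessarily the ring of integers $\OO_K$; the key observation is that Proposition \ref{ring-of-integers} promotes $\OO_K$ into $R_+(\b)$ despite $\b$ not being a $\Z$-generator of $\OO_K$, after which Lemma \ref{binomial-coeffs-of-binomial-coeffs} closes the argument.
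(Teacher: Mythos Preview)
Your argument is correct and matches the paper's one-line proof (invoke Proposition \ref{alg-num-to-alg-int}, then Theorem \ref{final-description}); you have simply spelled out the identity $R_+(\OO_{\Q(\b)}) = R_+(\b)$, which the paper recorded just after Proposition \ref{ring-of-integers}. One minor slip: the parenthetical ``$\b$ is not a nonnegative integer since $\a$ is not'' fails when $\a$ is a positive non-integer rational (e.g.\ $\a=3/2$ gives $\b=3$), though in that degree-one case the result is immediate from Lemma \ref{rational-case}.
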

\begin{proof}
This follows directly from Theorem \ref{final-description} and Proposition \ref{alg-num-to-alg-int}.
\end{proof}
These two results provide another description of $R_+(\a)$ which might prove more useful in certain applications.

\subsection{The case of prime numbers not dividing the discriminant}
Although we have a general form, it may be tedious to find $\a_p$ for $\a$ an algebraic integer, since this requires a calculation of $p$-adic valuations of algebraic elements over $\Z_p$ (which is relatively difficult). However, in a special case, which turns out to be very prevalent, the calculation can be simplified. Specifically, we consider the primes $p$ not dividing the discriminant of $m_\a.$ Then the factorization of $m_\a$ modulo $p$ extends uniquely modulo every power of $p$ \cite{henselslemma}. Therefore, instead of considering $m_\a$ in $\Z_p,$ we simply have to consider $m_\a$ modulo $p.$
\begin{lemma}\label{primes-not-dividing-discriminant}
Suppose $p\nmid\disc(m_\a).$ Then consider the roots $r_1,r_2,\dots,r_q$ of $m_\a$ modulo $p.$ Then Lemmas \ref{inclusion-of-a_p}, \ref{negative-valuation-of-nonlinear-ideals}, and \ref{multiply-into-a_p} apply for $\a_p=\frac{(\a-r_1)(\a-r_2)\cdots(\a-r_q)}{p}.$
\end{lemma}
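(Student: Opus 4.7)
The plan is to verify that with the simpler denominator $p$ (rather than $p^{K+k+qN}$), the conclusions of Lemmas \ref{inclusion-of-a_p}, \ref{negative-valuation-of-nonlinear-ideals} and \ref{multiply-into-a_p} continue to hold. The hypothesis $p\nmid\disc(m_\a)$ is what makes this simplification work: by Hensel's Lemma, each root $r_i$ of $m_\a$ mod $p$ lifts uniquely to a root $z_i\in\Z_p$ with $z_i\equiv r_i\pmod p$, and these exhaust all roots of $m_\a$ in $\Z_p$. In particular, the $z_i$ are already pairwise distinct modulo $p$, so in the notation of Definition \ref{a_p} one can take $k=1$, $K=0$, and the nonlinear obstruction $N=0$ (see below), which is why the simpler formula suffices.

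For the analogue of Lemma \ref{inclusion-of-a_p}, I would fix $\pp_i\in\PP_p$, corresponding to the linear factor $(x-z_i)$ over $\Q_p$. Since $\pp_i$ has ramification index $1$, we have $v_{\pp_i}(p)=1$, and by Proposition \ref{v_P-to-v_p} the computation reduces to $v_{\pp_i}(\a-r_j)=v_p(z_i-r_j)$. The term $j=i$ contributes $v_p(z_i-r_i)\geq 1$ since $r_i\equiv z_i\pmod p$. For $j\neq i$, the ultrametric inequality together with $v_p(z_i-z_j)=0$ (distinct mod $p$) gives
$$v_p(z_i-r_j)=v_p\bigl((z_i-z_j)+(z_j-r_j)\bigr)=0.$$
Summing yields $v_{\pp_i}(\a_p)=v_p(z_i-r_i)-1\geq 0$, as required.

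For the analogue of Lemma \ref{negative-valuation-of-nonlinear-ideals}, the main idea is a residue-field argument: if $\pp$ corresponds to a nonlinear irreducible factor $g$ of $m_\a$ over $\Q_p$ with root $y$, then $p\nmid\disc(m_\a)$ forces $\Q_p(y)/\Q_p$ to be unramified of degree $\deg g>1$, so the residue field $\F_p(\bar y)$ is a proper extension of $\F_p$. Hence $\bar y\neq\bar r_j$ for every integer $r_j$, which gives $v_p(y-r_j)=0$ for all $j$. Therefore
$$v_\pp(\a_p)=v_\pp(p)\Bigl(\sum_{j=1}^q v_p(y-r_j)-1\Bigr)=-v_\pp(p)<0.$$
This is also why the parameter $N$ from Definition \ref{a_p} can be taken to be $0$ in this case.

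Finally, the analogue of Lemma \ref{multiply-into-a_p} goes through verbatim: its proof only uses Corollary \ref{ring-in-terms-of-prime-valuation} together with the two valuation facts just established, so replacing the complicated $\a_p$ by the simpler one does not affect the argument. The main (minor) obstacle is the unramified-extension step in the nonlinear case; once one invokes the standard fact that $p\nmid\disc$ implies the factors of $m_\a$ over $\Q_p$ correspond bijectively (via reduction mod $p$) to the irreducible factors over $\F_p$, everything else is straightforward valuation bookkeeping that is uniformly simpler than the general case.
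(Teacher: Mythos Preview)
Your proposal is correct and follows essentially the same route as the paper. The paper likewise observes that Lemma~\ref{multiply-into-a_p} is formal once the two valuation facts are established, declares the analogue of Lemma~\ref{inclusion-of-a_p} ``simple to check,'' and focuses on the nonlinear case; there it argues directly that if $v_p(y-d)>0$ for some integer $d$ then $\bar g$ would acquire a root in $\F_p$ and hence, by Hensel, $g$ would split off a linear factor over $\Q_p$, a contradiction --- which is the same content as your unramified/residue-field argument, just phrased without invoking the words ``unramified'' or ``residue field.''
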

\begin{proof}
Lemma \ref{multiply-into-a_p} follows from Lemmas \ref{inclusion-of-a_p} and \ref{negative-valuation-of-nonlinear-ideals}, and Lemma \ref{inclusion-of-a_p} is simple to check. Hence, it remains to demonstrate the validity of Lemma \ref{negative-valuation-of-nonlinear-ideals}.

We wish to show, for every ideal $\qq$ corresponding to an irreducible nonlinear factor of $m_\a$ in $\Z_p,$ that $v_\qq(\a_p)<0.$ Say $y$ is a root of $m_\a$ (algebraic over $\Q_p).$ Then consider the irreducible factor $g$ of $m_\a$ of which $y$ is a root. Suppose now that $v_p(y-d)=k>0$ for some $d\in\Z.$ Then $$
0=g(y)=g(d+(y-d))\equiv g(d)\pmod{p^k}.
$$
Since $m_\a$ is monic with integral coefficients $(\a$ is an algebraic integer), $y$ is an algebraic integer over $\Z_p,$ so $g,$ the minimal polynomial of $y,$ must also be monic with integral coefficients. Hence, $g(d)$ is an integer divisible by a nonzero power of $p,$ so it must be divisible by $p.$ But then $d$ is a root of $g$ in $\F_p,$ so by Hensel's lemma $(g$ is a factor of $m_\a,$ so its discriminant is not divisible by $p$ either), $d$ extends to a root $d'$ of $g$ in $\Z_p.$ But this is a contradiction, since $g$ was assumed to be irreducible over $\Q_p.$ Hence, $v_p(y-d)=0$ for every integer $d.$
Therefore, letting $c=v_\qq(p)$ be the scaling factor, we have $$
v_\qq(\a_p)=cv_p\left(\frac{(y-r_1)(y-r_2)\cdots(y-r_q)}{p}\right)=c\left(\sum\limits_{i=1}^qv_p(y-r_i)-1\right)=-c<0,
$$
as desired.
%
\end{proof}
Since the discriminant of $m_\a$ is a nonzero integer $(m_\a$ is irreducible, hence separable, over $\Q),$ it is only divisible by finitely many primes. Hence, this significantly simplifies the calculations for all but finitely many primes.
\section{Examples}\label{Examples-Section}
In this section we study $R_+(\a)$ for specific types of algebraic numbers $\a.$
\subsection{Quadratic elements}\label{Quadratic-Section}
We first discuss, and completely solve, the case when $\a$ is an algebraic number of degree $2.$ These are numbers that are roots of quadratics $ax^2+bx+c,$ where $a,b,$ and $c$ are integers and the quadratic has no rational roots. Note that $\a=\frac{-b\pm\sqrt{b^2-4ac}}{2a}.$ We can simplify $b^2-4ac$ to remove the square factors to get that $\a=\frac{m+n\sqrt{d}}{p}$ for some integers $m,n,p,d$ with $d$ squarefree.
\begin{lemma}\label{quadratic-ring}
$R_+(\a)$ is a ring.
\end{lemma}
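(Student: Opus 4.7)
The plan is to observe that this is an immediate corollary of Corollary \ref{its-a-ring} (proved via Theorem \ref{multiply-to-get-pos-coeffs}), which already establishes that $R_+(\a)$ is a ring for every algebraic number $\a$ that is not a nonnegative integer. Since $\a$ has degree $2$ over $\Q$, its minimal polynomial is quadratic and irreducible over $\Q$, so $\a \notin \Q$ and in particular $\a \notin \Z_{\geq 0}$. Applying the corollary then gives the statement directly.

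If one wanted a self-contained proof that does not rely on the general machinery of Section \ref{Ring-Section}, the natural approach would be to exhibit $-1 \in R_+(\a)$ and invoke Lemma \ref{negative-one-ring}. Writing $\a = \frac{m+n\sqrt d}{p}$ with $d$ squarefree and $n \neq 0$, and letting $f(x) = ax^2 + bx + c$ be a (not necessarily minimal) integer polynomial vanishing at $\a$ with $a > 0$, one expands $f(\a) = 0$ in the binomial basis to get
\[
2a\binom{\a}{2} + (a+b)\a + c = 0.
\]
After adjusting by a constant multiple and possibly replacing $f$ by $x \cdot f(x)$ or by $(x-k)f(x)$ for a suitable integer shift $k$ so that the coefficients of $\binom{\a}{1}$ and $\binom{\a}{2}$ in the expansion become nonnegative while the constant term becomes negative, one reads off an expression of the form $-N = a_1\a + a_2\binom{\a}{2} + \cdots \in R_+(\a)$ with $N > 0$, and then $-1 = (N-1) + (-N) \in R_+(\a)$.

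The main (and only) obstacle in the direct approach is verifying that some translate or multiple of $f(x)$ really does produce nonnegative coefficients in the binomial basis of everything except the constant term; this is essentially a one-variable instance of the argument carried out in full generality in Theorem \ref{multiply-to-get-pos-coeffs}. Since that theorem is already available, the cleanest strategy is simply to quote Corollary \ref{its-a-ring}, and I would present the proof in a single sentence along those lines.
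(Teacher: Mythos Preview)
Your proposal is correct and matches the paper's own proof exactly: the paper simply notes that $\a$ has degree $2$, hence is an algebraic number that is not a nonnegative integer, and invokes Corollary~\ref{its-a-ring}. Your additional sketch of a direct argument is in the spirit of the alternative constructive proof the paper gives in the Appendix, though there the authors work with $\sqrt{d}$ explicitly and split into cases on the sign of $d$ rather than manipulating the quadratic $f$.
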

\begin{proof}
This follows from the fact that $\a$ is of degree $2,$ hence is an algebraic number that is not a nonnegative integer.
\end{proof}
By Proposition \ref{alg-num-to-alg-int}, we have $$
R_+(\a)=R_+(\OO_{\Q(\a)})[\a]=R_+(\OO_{\Q(\sqrt{d})})[\a]=R_+(\sqrt{d})[\a].
$$
In particular, 
it suffices to consider $R_+(\sqrt{d}).$
First, generalize the Legendre symbol, allowing a denominator of $2.$ Specifically, say $\left(\frac{d}{2}\right)=1$ if $d$ is a quadratic residue$\pmod{2^k}$ for any $k,$ which happens iff $d\equiv1\pmod8$ \cite{henselslemma}. Say $\left(\frac{d}{2}\right)=-1$ if $d$ is a quadratic residue$\pmod4$ but not$\pmod8,$ i.e. $d\equiv5\pmod8,$ $\left(\frac{d}{2}\right)=i,$ the imaginary unit, if $d$ is a quadratic residue$\pmod2$ but not$\pmod4,$ i.e. $d\equiv3\pmod4,$ and finally $\left(\frac{d}{2}\right)=0$ if $d$ is divisible by $2.$
\begin{lemma}\label{contains-inverses-of-primes}
For any prime $p,$ if $\left(\frac{d}{p}\right)\neq1,$ then $\frac{1}{p}\in R_+(\sqrt{d}).$
\end{lemma}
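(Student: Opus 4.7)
The plan is to reduce the lemma to an application of Corollary \ref{inverses-of-primes-in}, which asserts that $\frac{1}{p}\in R_+(\a)$ whenever $m_\a(x)$ has no root in $\Z_p$. Here $\a=\sqrt{d}$ with minimal polynomial $m_\a(x)=x^2-d$, so it suffices to verify: if $\left(\frac{d}{p}\right)\neq 1$, then $x^2-d$ has no solution in $\Z_p$. I would then split on $p$ into three cases matching the definition given just above the lemma.

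First, for odd $p$ with $p\nmid d$, the generalized symbol agrees with the usual Legendre symbol, and Hensel's lemma (applied to $x^2-d$, whose derivative is a unit in $\Z_p$ at any nonzero residue) gives that $d$ is a square in $\Z_p$ if and only if it is a square mod $p$. Thus $\left(\frac{d}{p}\right)\neq 1$ (i.e.\ $\left(\frac{d}{p}\right)=-1$) precludes a root. Second, for odd $p$ with $p\mid d$, squarefreeness forces $v_p(d)=1$, so a putative $x\in\Z_p$ with $x^2=d$ would satisfy $2v_p(x)=1$, which is impossible; this handles $\left(\frac{d}{p}\right)=0$. Third, for $p=2$, the definition of $\left(\frac{d}{2}\right)$ is calibrated precisely so that $\left(\frac{d}{2}\right)=1$ iff $d\equiv 1\pmod 8$, which is the standard Hensel condition for $d$ (odd) to be a square in $\Z_2$; when $d$ is even, the same $2$-adic valuation argument as in the previous case applies.

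In every case where $\left(\frac{d}{p}\right)\neq 1$, the polynomial $x^2-d$ has no root in $\Z_p$, and Corollary \ref{inverses-of-primes-in} delivers $\frac{1}{p}\in R_+(\sqrt{d})$.

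There is no real obstacle here: the lemma is essentially a direct translation of the generalized Legendre symbol into the condition ``$x^2-d$ splits off a linear factor over $\Q_p$,'' and the heavy lifting was already done by Corollary \ref{inverses-of-primes-in}. The only point requiring mild care is the $p=2$ case, where the correct Hensel threshold is $\pmod 8$ rather than $\pmod 2$, but this is exactly what the definition of $\left(\frac{d}{2}\right)$ is set up to encode.
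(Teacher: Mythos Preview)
Your proposal is correct and follows essentially the same approach as the paper: reduce to Corollary~\ref{inverses-of-primes-in} by showing that $x^2-d$ has no root in $\Z_p$ when $\left(\frac{d}{p}\right)\neq 1$. The paper's proof is a one-liner that simply says ``by definition'' the polynomial has no roots (since the generalized Legendre symbol was set up precisely to encode this), whereas you carefully unpack the three cases; but the logic is identical.
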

\begin{proof}
By definition, the polynomial $x^2-d$ has no roots in $\Z_p$ in this case, so by Corollary \ref{inverses-of-primes-in}, $(\sqrt{d})_p=\frac{1}{p}\in R_+(\sqrt{d}),$ as desired.
\end{proof}
In particular, we have $$
\OO_{\Q(\sqrt{d})}\left[\frac{1}{p_1},\frac{1}{p_2},\dots\right]\subseteq R_+(\sqrt{d}),
$$
where $p_i$ are the primes such that the Legendre symbol $\left(\frac{d}{p_i}\right)\neq1$ (it is well-known that there are infinitely many of these).
\begin{lemma}\label{equality-in-quadratic-case}
For any prime $p,$ if $\left(\frac{d}{p}\right)=1,$ then any element of $R_+(\sqrt{d})$ can be multiplied by an integer not divisible by $p$ to obtain an element of $\OO_{\Q(\sqrt{d})}.$
\end{lemma}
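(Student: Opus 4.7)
The plan is to invoke the explicit construction of $\a_p$ from Section \ref{Specifics-Section} together with the valuation description of $R_+(\a)$. First, I would note that the hypothesis $\left(\frac{d}{p}\right)=1$ is precisely the statement that $x^2-d$ has a root in $\Z_p$: for odd $p$ this is the usual Legendre symbol together with Hensel's lemma, and for $p=2$ the extended symbol was defined exactly so that $\left(\frac{d}{2}\right)=1$ means $d$ is a square in $\Z_2$. Consequently $m_{\sqrt d}=x^2-d$ factors as a product of two distinct linear factors over $\Q_p$, so by Proposition \ref{correspondence-between-p-adic-fields} the ideal $(p)\subset\OO_{\Q(\sqrt d)}$ splits as $\pp_1\pp_2$ with both $\pp_i\in\PP_p$.

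The key step is to show $(\sqrt d)_p\in\OO_{\Q(\sqrt d)}$. By Lemma \ref{inclusion-of-a_p} we already have $(\sqrt d)_p\in R_+(\sqrt d)$, and the proof of that lemma actually produces the sharper fact $v_{\pp_i}((\sqrt d)_p)=0$ for $i=1,2$; since every prime ideal of $\OO_{\Q(\sqrt d)}$ over $p$ is one of the $\pp_i$ (as $\PP_p$ exhausts the ideals above $p$ in this case, there being no nonlinear factor of $m_{\sqrt d}$), $(\sqrt d)_p$ has nonnegative valuation at every prime over $p$. At any prime $\qq$ above a prime $q\neq p$, the denominator of $(\sqrt d)_p$ is a power of $p$ and hence a unit in $\OO_{\Q(\sqrt d)_\qq}$, while the numerator is an algebraic integer; therefore $v_\qq((\sqrt d)_p)\ge 0$ as well. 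Putting these together, $(\sqrt d)_p$ has nonnegative valuation at every prime ideal of $\OO_{\Q(\sqrt d)}$, and since $\OO_{\Q(\sqrt d)}$ is a Dedekind domain, this forces $(\sqrt d)_p\in\OO_{\Q(\sqrt d)}$, so $\OO_{\Q(\sqrt d)}[(\sqrt d)_p]=\OO_{\Q(\sqrt d)}$.

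Finally, given any $\b\in R_+(\sqrt d)$, Lemma \ref{multiply-into-a_p} applied to the algebraic integer $\sqrt d$ (its minimal polynomial $x^2-d$ being monic with integer coefficients) furnishes an integer $n$ coprime to $p$ with $n\b\in\OO_{\Q(\sqrt d)}[(\sqrt d)_p]$. By the preceding paragraph this right-hand side equals $\OO_{\Q(\sqrt d)}$, which gives the desired conclusion. The only mildly subtle point is the case $p=2$, where the definition of $(\sqrt d)_p$ involves choosing integer representatives $r_1,r_2$ congruent to $\pm\sqrt d$ modulo $4$ (not just modulo $2$, since the two $2$-adic roots coincide mod $2$); this raises the exponent in the denominator to $2^{K+k}=2^3$, but the argument above is insensitive to this shift, as the proof reduces to a valuation check that Lemma \ref{inclusion-of-a_p} already handles uniformly.
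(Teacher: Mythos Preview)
Your proof is correct and follows essentially the same approach as the paper: both arguments reduce the lemma to showing $(\sqrt d)_p\in\OO_{\Q(\sqrt d)}$ and then invoke Lemma \ref{multiply-into-a_p}. The only difference is that the paper verifies integrality of $(\sqrt d)_p$ by an explicit computation (unpacking Definition \ref{a_p} to get $(\sqrt d)_p=\frac{d-r_1^2}{p^{2k-1}}\in\Z$), whereas you argue more abstractly via valuations, using that $x^2-d$ splits completely over $\Q_p$ so that every prime above $p$ lies in $\PP_p$ and hence $v_\pp((\sqrt d)_p)=0$ by the proof of Lemma \ref{inclusion-of-a_p}.
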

\begin{proof}
It suffices to show that $(\sqrt{d})_p\in\OO_{\Q(\sqrt{d})}.$ Now, note that $x^2-d$ splits in $\Z_p[x]$ with two roots $z_1,z_2$ satisfying $z_1=-z_2,$ and so we apply Definition \ref{a_p} to see that $N=0$ and $K=k_1=k_2=v_p(z_1-z_2)=k-1,$ so $m_1=m_2=k.$ Thus, we have $(\sqrt{d})_p=\frac{(\sqrt{d}-r_1)(\sqrt{d}-r_2)}{p^{2k-1}}.$ Note that we can take $r_1=-r_2$ to reduce $$
(\sqrt{d})_p=\frac{d-r_1^2}{p^{2k-1}}\in\Z\subset\OO_{\Q(\sqrt{d})},
$$
because if $p\neq2$ then $k=1$ so $p^{2k-1}=p$ and $d-r_1^2$ is divisible by $p,$ and if $p=2$ then $k=2$ so $p^{2k-1}=2^3=8,$ but $d\equiv1\pmod8$ and every odd number squared is equivalent to $1\pmod8,$ so $d-r_1^2$ is divisible by $8.$ This completes the proof.
\end{proof}
Now, for every squarefree integer $d,$ define $\PP_d$ as the set of primes $p$ such that $\left(\frac{d}{p}\right)\neq1.$ Then we obtain the following characterization.
\begin{theorem}\label{characterization-when-sqrt}
For any squarefree integer $d,$ the ring $R_+(\sqrt{d})=\OO_{\Q(\sqrt{d})}\left[\frac{1}{p_1},\frac{1}{p_2},\dots\right],$ where $p_i$ are the primes in $\PP_d.$
\end{theorem}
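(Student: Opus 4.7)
The plan is to prove the two inclusions separately, using the machinery already developed. Throughout, write $S = \OO_{\Q(\sqrt{d})}\left[\frac{1}{p_1},\frac{1}{p_2},\dots\right]$ where $p_i$ runs over $\PP_d$.

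For the forward inclusion $S \subseteq R_+(\sqrt{d})$, I would combine three ingredients already in hand: Proposition \ref{ring-of-integers} shows $\OO_{\Q(\sqrt{d})} \subset R_+(\sqrt{d})$; Lemma \ref{contains-inverses-of-primes} shows $\frac{1}{p_i} \in R_+(\sqrt{d})$ for every $p_i \in \PP_d$; and Lemma \ref{quadratic-ring} guarantees that $R_+(\sqrt{d})$ is a ring, so it is closed under the ring operations and therefore contains the subring generated by $\OO_{\Q(\sqrt{d})}$ and the $\frac{1}{p_i}$.

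For the reverse inclusion $R_+(\sqrt{d}) \subseteq S$, I would invoke the overring description from Corollary \ref{ring-in-terms-of-prime-valuation}. First, I would show that $S$ is exactly the set of elements $x \in \Q(\sqrt{d})$ satisfying $v_{\qq}(x) \geq 0$ for every prime ideal $\qq$ of $\OO_{\Q(\sqrt{d})}$ lying over a rational prime \emph{not} in $\PP_d$. Indeed, each $\frac{1}{p_i}$ has negative valuation at every prime over $p_i$, so those primes cannot lie in the set $\QQ$ of Corollary \ref{ring-in-terms-of-prime-valuation}; conversely, for any prime $\qq$ lying over a prime $p \notin \PP_d$, every element of $S$ is a sum of terms of the form $x \cdot p_{i_1}^{-e_1}\cdots p_{i_k}^{-e_k}$ with $x \in \OO_{\Q(\sqrt{d})}$, and since $v_\qq(p_{i_j}) = 0$ each such term has nonnegative $\qq$-adic valuation.

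Having this characterization, I would take an arbitrary $\beta \in R_+(\sqrt{d})$ and show $v_\qq(\beta) \geq 0$ for every prime ideal $\qq$ lying over a rational prime $p \notin \PP_d$. This is where Lemma \ref{equality-in-quadratic-case} does the real work: it provides an integer $n$ with $p \nmid n$ such that $n\beta \in \OO_{\Q(\sqrt{d})}$. Since $v_\qq(n) = 0$ (as $\qq$ lies over $p$ and $p \nmid n$) and $v_\qq(n\beta) \geq 0$ (as $n\beta$ is an algebraic integer), we conclude $v_\qq(\beta) = v_\qq(n\beta) - v_\qq(n) \geq 0$. Applying this at every such $\qq$ and invoking the characterization of $S$ above yields $\beta \in S$, completing the proof.

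I do not expect a significant obstacle here: all the hard analytic and number-theoretic content is already packaged in Lemma \ref{equality-in-quadratic-case} (which handles the delicate case $p = 2$, $d \equiv 1 \pmod 8$) and in Lemma \ref{contains-inverses-of-primes}. The remaining task is essentially bookkeeping with valuations and a clean application of the overring characterization from Corollary \ref{ring-in-terms-of-prime-valuation}.
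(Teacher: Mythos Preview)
Your proof is correct. It uses the same two workhorse lemmas as the paper (Lemma \ref{contains-inverses-of-primes} for $p\in\PP_d$ and Lemma \ref{equality-in-quadratic-case} for $p\notin\PP_d$), but glues them together slightly differently: the paper simply plugs into Theorem \ref{final-description}, noting that $(\sqrt{d})_p=\frac{1}{p}$ when $p\in\PP_d$ and $(\sqrt{d})_p\in\OO_{\Q(\sqrt{d})}$ when $p\notin\PP_d$ (the latter is exactly what the proof of Lemma \ref{equality-in-quadratic-case} establishes), so that $\OO_{\Q(\sqrt{d})}[\,(\sqrt{d})_2,(\sqrt{d})_3,\dots]=\OO_{\Q(\sqrt{d})}[\frac{1}{p_1},\frac{1}{p_2},\dots]$. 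You instead go back one layer and invoke Corollary \ref{ring-in-terms-of-prime-valuation} directly, extracting the valuation consequence of Lemma \ref{equality-in-quadratic-case} yourself. The paper's route is more compact; yours makes the valuation bookkeeping explicit and avoids any reference to the construction of the generators $\a_p$. Both are equally valid and rest on the same substance.
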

\begin{proof}
This is a simple application of Theorem \ref{final-description}. If $p\in\PP_d,$ then by Lemma \ref{contains-inverses-of-primes}, $(\sqrt{d})_p=\frac{1}{p}.$ If $p\not\in\PP_d,$ then by Lemma \ref{equality-in-quadratic-case}, $(\sqrt{d})_p\in\OO_{\Q(\sqrt{d})}.$ The result follows.
\end{proof}
\subsection{Roots of Unity}
A root of unity is a number $x$ such that $x^n=1.$ A primitive $n$'th root of unity is a number $x$ such that $x^n=1$ and $x^k\neq1$ for any $k<n.$ For each positive integer $n,$ there are exactly $\varphi(n)$ primitive $n$'th roots of unity, where $\varphi$ stands for Euler's totient function. Specifically, say $\zeta$ is one such root of unity. Then $\zeta^k,$ where $k$ is relatively prime to $n,$ are all the primitive $n$'th roots of unity. These are conjugates of each other, with minimal polynomial $\Phi_n(x)\mid(x^n-1).$ Now, roots of unity are algebraic integers, in particular the discriminant of $\Phi_n(x)$ divides the discriminant of $x^n-1.$ We can calculate the discriminant of $x^n-1$ explicitly by the formula \begin{align*}
    D(x^n-1)&=\prod_{1\leq i<j\leq n}(\zeta^i-\zeta^j)^2=(-1)^{n(n-1)/2}\prod_{1\leq i\leq n}\prod_{1\leq j\leq n,j\neq i}(\zeta^i-\zeta^j)\\
    &=(-1)^{n(n-1)/2}\prod_{1\leq i\leq n}\frac{d}{dx}(x^n-1)(\zeta^i)=(-1)^{n(n-1)/2}\prod_{1\leq i\leq n}n(\zeta^i)^{n-1}\\
    &=n^n(-1)^{n(n-1)/2}\zeta^{(n^3-n)/2}=n^n(-1)^{n(n+1)/2+1}.
\end{align*}
In particular, Lemma \ref{primes-not-dividing-discriminant} allows for a simpler calculation for all primes not dividing $n.$
\begin{lemma}
If $n>2,$ then for any prime $p$ that divides $n,$ the element $\frac{1}{p}\in R_+(\zeta).$
\end{lemma}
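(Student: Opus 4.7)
The plan is to invoke Corollary \ref{inverses-of-primes-in}: it suffices to prove that the minimal polynomial $\Phi_n$ of $\zeta$ has no roots in $\Z_p$, since that corollary then yields $\zeta_p=\tfrac{1}{p}\in R_+(\zeta)$ directly. A root of $\Phi_n$ in $\Z_p$ would be a primitive $n$-th root of unity sitting in $\Z_p^\times$, so the task reduces to showing that $\Z_p^\times$ contains no element of order exactly $n$ when $p\mid n$ and $n>2$.

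For this I would appeal to the standard description of the units of the $p$-adic integers. When $p$ is odd, one has $\Z_p^\times\cong\mathbb F_p^\times\times(1+p\Z_p)$; the second factor is mapped isomorphically onto $p\Z_p$ by the $p$-adic logarithm and is therefore torsion-free, so the torsion subgroup of $\Z_p^\times$ is cyclic of order $p-1$. When $p=2$, $\Z_2^\times\cong\{\pm1\}\times(1+4\Z_2)$, and again the second factor is torsion-free, so the torsion subgroup has order $2$. Consequently, any root of unity in $\Z_p$ has order dividing $p-1$ (for odd $p$) or dividing $2$ (for $p=2$).

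Now suppose $\zeta_n\in\Z_p^\times$ has order exactly $n$. For odd $p$ this forces $n\mid p-1$; combined with $p\mid n$, it gives $p\mid p-1$, which is impossible. For $p=2$ it forces $n\mid 2$, contradicting the hypothesis $n>2$. Either way we reach a contradiction, so $\Phi_n$ has no root in $\Z_p$, and Corollary \ref{inverses-of-primes-in} produces $\tfrac{1}{p}\in R_+(\zeta)$, as desired.

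There is essentially no obstacle beyond citing the structure of $\Z_p^\times$; the rest is a one-line order argument. The role of the hypothesis $n>2$ is precisely to handle the single awkward case $p=2$, $n=2$, where $\zeta=-1$ already lies in $\Z$ and $\tfrac12$ is not in $R_+(-1)=\Z$; for all other $(p,n)$ with $p\mid n$, the argument above applies uniformly.
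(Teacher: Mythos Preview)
Your proof is correct but takes a genuinely different route from the paper's. You invoke Corollary~\ref{inverses-of-primes-in} and reduce the question to showing that $\Phi_n$ has no root in $\Z_p$, which you settle via the standard description of the torsion subgroup of $\Z_p^\times$ (cyclic of order $p-1$ for odd $p$, of order $2$ for $p=2$). The paper instead argues constructively: for odd $p$ it passes to a $p$-th root of unity $\tau=\zeta^{n/p}$, observes that $\frac{\tau^p-\tau}{p}=\frac{1-\tau}{p}\in R_+(\tau)$, multiplies the elements $\frac{1-\tau^i}{p}$ over $0<i<p$ to obtain $\frac{p-1}{p^{p-1}}$, and extracts $\frac{1}{p}$; the case $p=2$ is handled by splitting into $4\mid n$ (using $R_+(i)$) and $n$ having an odd prime factor.

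Your approach is cleaner and more conceptual, leaning on machinery already built in Section~\ref{Specifics-Section} together with a classical structural fact about $p$-adic units. The paper's approach is entirely elementary and self-contained, requiring nothing beyond the ring property of $R_+(\zeta)$ and explicit manipulation of binomial-coefficient-type expressions; it never appeals to the $p$-adic framework. Either argument is perfectly acceptable here; yours is shorter once Corollary~\ref{inverses-of-primes-in} is in hand, while the paper's makes the inclusion $\frac{1}{p}\in R_+(\zeta)$ completely explicit.
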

\begin{proof}
First, assume $p$ is an odd prime. It is sufficient to take a $p$'th root of unity $\tau=\zeta^{n/p}$ and prove that $\frac{1}{p}\in R_+(\tau)\subset R_+(\zeta).$ Now, we have $\frac{\tau^p-\tau}{p}=\frac{1-\tau}{p}\in R_+(\tau)$ (because $R_+(\tau)$ is a ring and the element $\tau^p-\tau$ is equivalent to $\tau(\tau-1)\cdots(\tau-p+1)$ modulo $p).$ Now, for any $0<i<p,$ we have $\frac{1-\tau^i}{p}\in R_+(\tau^i)=R_+(\tau)$ by the same reasoning. We multiply these $p-1$ elements to get $\frac{p-1}{p^{p-1}}\in R_+(\tau),$ so $\frac{p-1}{p}\in R_+(\tau).$ We subtract this element from $1$ to get that $\frac{1}{p}\in R_+(\tau),$ as desired.

Now, suppose $p=2.$ Then, since $n>2,$ either $4\mid n$ or $p\mid n$ for some odd prime $p.$ If $4\mid n,$ then the $4$th root of unity $i\in R_+(\zeta).$ In particular, we have $R_+(i)\subset R_+(\zeta).$ Reviewing the quadratic case, we see that $\frac{1}{2}\in R_+(i),$ as desired. If $p\mid n,$ then we consider once again $\tau$ a $p$'th root of unity. We have $\frac{\tau^2-\tau}{2}\in R_+(\zeta).$ We multiply by $\tau^{p-1}$ and multiply by $-1$ to get $\frac{1-\tau}{2}\in R_+(\zeta).$ Once again, we get $\frac{1-\tau^i}{2}\in R_+(\zeta)$ for $0<i<p.$ We multiply these elements to get $\frac{p-1}{2^{p-1}}\in R_+(\zeta).$ The denominator contains strictly more factors of $2$ than the numerator (since $2^x>x$ for all $x),$ so we have $\frac{1}{2}\in R_+(\zeta),$ as desired.
\end{proof}
This lemma allows us to entirely describe the structure of $R_+(\zeta).$
\begin{prop}
For any primitive $n$'th root of unity $\zeta,$ where $n>2,$ the ring $$
R_+(\zeta)=\Z\left[\zeta,\frac{1}{p_1},\frac{1}{p_2},\dots\right],
$$
where $p_i$ are the primes that are not equivalent to $1$ modulo $n.$
\end{prop}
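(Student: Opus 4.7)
The plan is to apply Theorem \ref{final-description}, using that $\zeta$ is an algebraic integer with $\OO_{\Q(\zeta)}=\Z[\zeta]$ (a standard fact for cyclotomic fields). This reduces the problem to identifying $\zeta_p$ for each prime $p$. The key number-theoretic input is the classical description of how $\Phi_n(x)$ factors modulo a prime $p$: when $p\nmid n$, $\Phi_n$ factors over $\F_p$ (and, by Hensel, over $\Z_p$) into distinct irreducible polynomials, each of degree $d$ equal to the multiplicative order of $p$ modulo $n$.

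I would split into three cases. First, suppose $p\nmid n$ and $p\equiv1\pmod n$, so $d=1$. Then $\Phi_n$ splits over $\Z_p$ into $\varphi(n)$ distinct linear factors, with integer representatives $r_1,\dots,r_{\varphi(n)}$, and by Lemma \ref{primes-not-dividing-discriminant}, $\zeta_p=\frac{(\zeta-r_1)\cdots(\zeta-r_{\varphi(n)})}{p}$. Since $\Phi_n(x)\equiv\prod_i(x-r_i)\pmod p$ in $\Z[x]$, evaluating at $\zeta$ and using $\Phi_n(\zeta)=0$ shows $\prod_i(\zeta-r_i)\in p\Z[\zeta]$, so $\zeta_p\in\Z[\zeta]=\OO_{\Q(\zeta)}$ and contributes nothing new. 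Second, suppose $p\nmid n$ and $p\not\equiv1\pmod n$, so $d>1$. Then $\Phi_n$ has no linear factor mod $p$, hence no root in $\Z_p$; by Corollary \ref{inverses-of-primes-in}, $\zeta_p=\frac{1}{p}\in R_+(\zeta)$. Third, suppose $p\mid n$; since $n>2$ and $p\le n$, we automatically have $p\not\equiv1\pmod n$. The immediately preceding lemma gives $\frac{1}{p}\in R_+(\zeta)$, and by Definition \ref{a_p} the denominator of $\zeta_p$ is a power of $p$, so $\zeta_p\in\Z[\zeta,1/p]$; thus adjoining $\zeta_p$ amounts to adjoining $1/p$.

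Combining the three cases, Theorem \ref{final-description} yields
\[
R_+(\zeta)=\Z[\zeta]\bigl[\zeta_2,\zeta_3,\zeta_5,\dots\bigr]=\Z\left[\zeta,\tfrac{1}{p_1},\tfrac{1}{p_2},\dots\right],
\]
where the $p_i$ run over primes with $p_i\not\equiv1\pmod n$: the primes $p\equiv1\pmod n$ contribute nothing beyond $\Z[\zeta]$, and for every other prime we have $\frac{1}{p}\in R_+(\zeta)$, while in all cases $\zeta_p$ lies inside $\Z[\zeta,1/p]$.

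The main obstacle I anticipate is the clean verification that $\zeta_p\in\Z[\zeta]$ when $p\equiv1\pmod n$, since one must carefully lift the mod-$p$ factorization of $\Phi_n$ to $\Z_p$, match the roots to integer representatives, and argue that the resulting product differs from $\Phi_n(\zeta)$ by an element of $p\Z[\zeta]$. Everything else is a direct bookkeeping application of Theorem \ref{final-description}, Lemma \ref{primes-not-dividing-discriminant}, Corollary \ref{inverses-of-primes-in}, and the preceding lemma handling $p\mid n$.
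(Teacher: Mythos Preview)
Your proposal is correct and follows essentially the same approach as the paper: the same three-case split on $p$, the same appeal to Theorem~\ref{final-description} and $\OO_{\Q(\zeta)}=\Z[\zeta]$, the same use of the preceding lemma for $p\mid n$, and the same conclusion. The only notable difference is in the case $p\equiv 1\pmod n$: you verify $\zeta_p\in\Z[\zeta]$ by writing $\Phi_n(x)\equiv\prod_i(x-r_i)\pmod p$ and evaluating at $\zeta$, whereas the paper computes $\frac{\zeta(\zeta-1)\cdots(\zeta-p+1)}{p}\equiv\frac{\zeta^p-\zeta}{p}=0\pmod 1$ using $\zeta^p=\zeta$; your version is arguably more directly tied to the definition of $\zeta_p$ in Lemma~\ref{primes-not-dividing-discriminant}, and the ``obstacle'' you flag is in fact already handled by the one-line argument you give.
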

\begin{proof}
For any prime $p$ not dividing $n,$ consider the equation $\Phi_n(x)\equiv0\pmod p.$ Note that, since $\Phi_n$ divides $x^n-1,$ we must have that any root $r$ is an $n$'th root of unity modulo $p.$ On the other hand, $\Phi_n$ divides $\frac{x^n-1}{x^k-1}$ for any $k$ dividing $n,$ and the discriminant of $x^n-1$ is nonzero, hence $r$ is not a $k$'th root of unity modulo $p$ for any $k<n.$ Therefore, $r$ is a primitive $n$'th root of unity modulo $p.$ Note that $r$ is a member of the group of units of $\Z/p\Z,$ equal to $(\Z/p\Z)^\times\simeq\Z/(p-1)\Z.$ Since $r$ has order $n,$ by Lagrange's Theorem $n$ must divide the order $p-1$ of the group for such a root $r$ to exist. Thus, if $p\not\equiv1\pmod n,$ then there are no roots of $\Phi_n$ modulo $p,$ so $\frac{1}{p}\in R_+(\zeta).$ Now, suppose $p\equiv1\pmod n,$ in particular $p\nmid n,$ so $p$ does not divide the discriminant of $\Phi_n.$ Note that $$
\Phi_n\mid(x^n-1)\mid(x^{p-1}-1)\mid(x^p-x),
$$
and $x^p-x$ splits entirely modulo $p.$ Hence, $\Phi_n$ splits modulo $p,$ so we have $$
\zeta_p=\frac{\zeta(\zeta-1)\cdots(\zeta-p+1)}{p}\equiv\frac{\zeta^p-\zeta}{p}=0\pmod{1},
$$
so $\zeta_p\in\Z[\zeta].$ Finally, it is possible to show that the ring $\Z[\zeta]$ is integrally closed, i.e. it is its own ring of integers \cite{milneANT}. Applying Theorem \ref{final-description}, we obtain the desired result.  
\end{proof}

\appendix





\section{Constructive proofs of some results of section 7}
\begin{proof}[Proof of Lemma \ref{equality-in-quadratic-case}]
We demonstrate an explicit construction for $p=2.$ The case where $p$ is an odd prime is very similar and follows much more immediately from previous theorems (as no repeated roots$\pmod p$ may extend for $p\neq2).$

Say $d$ is an odd number with $d\equiv x_k^2\pmod{2^k}$ for some integer $x_k$ for all integers $k.$ Thus, $d\equiv1\pmod8$ (this follows from an examination of the multiplicative group of units of $\Z/2^k\Z, $ isomorphic to $\Z/2\Z\times\Z/2^{k-2}\Z)$ and it has four square roots modulo $2^k$ for any $k\geq3,$ call them $x_k,y_k,2^k-x_k,2^k-y_k.$ Now, we prove by induction that, for any integers $l>k,$ $$
(\sqrt{d}+k)\dots(\sqrt{d}+l)=2^{v_2((l-k+1)!)-1}(a+b\sqrt{d}),
$$
with $a,b\in\Z.$

The base case is $l-k+1\leq8,$ for which we can explicitly calculate the product to see that it is divisible by the necessary number of $2$'s.

Now, suppose that for any $k,l$ with $l-k+1\leq2^n$ (including $n=3),$ that $$
(\sqrt{d}+k)\dots(\sqrt{d}+l)=2^{v_2((l-k+1)!)-1}(a+b\sqrt{d}),
$$
with $a,b\in\Z.$ Take any $l>k$ with $2^n<l-k+1\leq2^{n+1}.$ Then we have, by the inductive hypothesis, that $$
(\sqrt{d}+k)\dots(\sqrt{d}+l-2^n)=2^{v_2((l-k-2^n+1)!)-1}(a+b\sqrt{d})
$$
and $$
(\sqrt{d}+l-2^n+1)\dots(\sqrt{d}+l)=2^{v_2((2^n)!)-1}(c+e\sqrt{d}).
$$
First, assume that $l-k+1\neq2^{n+1}.$ Then, by Legendre's formula, as before, we have $$
v_2((l-k+1)!)=v_2((2^n)!)+v_2((l-k-2^n+1)!).
$$
We now show, by a calculation of norms, that both $a+b\sqrt{d}$ and $c+e\sqrt{d}$ have norm divisible by $4.$ To see this, consider the norms of the left sides, $(k^2-d)\dots((l-2^n)^2-d)$ and $((l-2^n+1)^2-d)\dots(l^2-d).$ In the sets $\{k,\dots,l-2^n\}$ and $\{l-2^n+1,\dots,l\},$ respectively, there are $l-k-2^n+1$ and $2^n$ consecutive numbers, among which at least $\floor{\frac{l-k-2^n+1}{2}}$ and $2^n,$ respectively, are square roots of $d\pmod8$ - all the numbers equivalent to $1\pmod2,$ because they are either $1,3,5$ or $7\pmod8,$ all of which square to $1\pmod8$ - at least $\floor{\frac{l-k-2^n+1}{4}},\floor{\frac{2^n}{4}}$ are square roots of $d\pmod{16}$ (half of the odd numbers are $1\pmod{16}$ when squared, and the other half are $9\pmod{16}),$ at least $\floor{\frac{l-k-2^n+1}{8}},\floor{\frac{2^n}{8}}$ are square roots of $d\pmod{32},$ and so on. We thus get that \begin{align*}
&v_2(N((\sqrt{d}+k)\dots(\sqrt{d}+l-2^n)))\\
&\geq3\floor{\frac{l-k-2^n+1}{2}}+\floor{\frac{l-k-2^n+1}{4}}+\floor{\frac{l-k-2^n+1}{8}}+\dots\\
&=2\floor{\frac{l-k-2^n+1}{2}}+v_2((l-k-2^n+1)!)\geq2v_2((l-k-2^n+1)!)
\end{align*}
and similarly $$
v_2(N((\sqrt{d}+l-2^n+1)\dots(\sqrt{d}+l)))\geq2v_2((2^n)!).
$$
We compare to the norms of the right hand sides, $$
v_2(2^{2v_2((l-k-2^n+1)!)-2}N(a+b\sqrt{d}))=2v_2((l-k-2^n+1)!)-2+v_2(N(a+b\sqrt{d}))
$$
and $$
v_2(2^{2v_2((2^n)!)-2}N(c+e\sqrt{d}))=2v_2((2^n)!)-2+v_2(N(c+e\sqrt{d})),
$$
respectively. We then see that both $v_2(N(a+b\sqrt{d})$ and $v_2(N(c+e\sqrt{d})$ are at least $2,$ so the norms are divisible by $4,$ as desired.

Say the norm of $a+b\sqrt{d},$ equal to $a^2-b^2d,$ is divisible by $4.$ Then $a^2\equiv b^2d\pmod4$ so either they are both $0$ or $1.$ Thus, either $a$ and $b$ are both even or they are both odd. If $a,b$ are both even, then $a+b\sqrt{d}$ is divisible by $2$ so $(a+b\sqrt{d})(c+e\sqrt{d})=g+f\sqrt{d}$ is divisible by $2.$ The same happens if $c,e$ are both even. If $a,b,c,e$ are all odd, then $g+f\sqrt{d}=(ac+bed)+(ae+bc)\sqrt{d}$ has both $ac+bed$ and $ae+bc$ even because $d$ is odd, so $g+f\sqrt{d}$ is divisible by $2.$ In any case, $g+f\sqrt{d}=2(h+i\sqrt{d})$ for some integers $h,i.$

Thus, $$
(\sqrt{d}+k)\dots(\sqrt{d}+l)=2^{v_2((l-k+1)!)-2}(a+b\sqrt{d})(c+e\sqrt{d})=2^{v_2((l-k+1)!)-1}(h+i\sqrt{d}),
$$
as desired. 

If $l+k-1=2^{n+1},$ then the integers $\{k,k+1,\dots,l\}$ span all the residues $\pmod{2^{n+1}}.$ For each odd $m$ with $k\leq m\leq l$ that is nonzero modulo $2,$ consider the corresponding $j\neq m$ (note that $n+1>1$ so $2^{n+1}\neq2)$ with $m+j\equiv0\pmod{2^{n+1}},$ and consider the product $$
(\sqrt{d}+m)(\sqrt{d}+j)=(d+mj)+(m+j)\sqrt{d}.
$$
If $m^2\equiv d\pmod{2^i}$ for some $i,$ then $$
d+mj\equiv d+m(p^{n+1}-m)\equiv d-m^2\equiv0\pmod{2^i}
$$
and $m+j\equiv0\pmod{2^i},$ so $(\sqrt{d}+m)(\sqrt{d}+j)$ is divisible by $2^i.$ This holds for any $i\leq n.$ Now, note that, from above, every odd integer is a square root of $d\pmod8,$ every fourth integer is a square root of $d\pmod{16},$ every eighth integer is a square root of $d\pmod{32},$ and so on, until we get to every $2^{n-2}$'th integer is a square root of $d\pmod{2^n},$ so we have, where $v_2(a+b\sqrt{d})=\min(v_2(a),v_2(b)),$ that \begin{align*}
v_2((\sqrt{d}+k)\dots(\sqrt{d}+l))&\geq3\floor{\frac{2^{n+1}}{2}}+\floor{\frac{2^{n+1}}{4}}+\floor{\frac{2^{n+1}}{8}}+\dots+\floor{\frac{2^{n+1}}{2^{n-2}}}\\
&=3\cdot2^n+2^{n-1}+\dots+8=2^{n+2}-8>2^{n+1}-1
\end{align*} for $n\geq2,$ which it is (in fact, $n\geq3).$ This completes the inductive step.

In particular, for any $k$ we have $$
\binom{\sqrt{d}}{k}=\frac{\sqrt{d}(\sqrt{d}-1)\dots(\sqrt{d-k+1})}{k!}=\frac{2^{v_2(k!)-1}(a+b\sqrt{d})}{k!}.
$$
Thus, all factors of $2$ but one cancel in the denominator. This immediately implies that $\frac{1}{2}\not\in R_+(\sqrt{d}),$ as otherwise $\left(\frac{1}{2}\right)^2\in R_+(\sqrt{d}).$ In this case, we see that $\frac{a+b\sqrt{d}}{2}\in R_+(\sqrt{d})$ only if $a,b$ are of the same sign, as if one is even, then we can subtract an integer or an integer multiple of $\sqrt{d}$ to get either $\frac{a}{2}\in R_+(\sqrt{d})$ or $\frac{b\sqrt{d}}{2}\in R_+(\sqrt{d}),$ respectively, implying that the other is also even (otherwise $\frac{1}{2}\in R_+(\sqrt{d})).$ This completes the proof.
\end{proof}
\begin{proof}[Proof of Lemma \ref{quadratic-ring}]
Here we explicitly demonstrate the inclusion of $-1\in R_+(\a),$ showing that it is a ring. For negative $d$ we immediately have $(\sqrt{d})^2=d\in R_+(\sqrt{d}).$ Since $d$ is a negative integer, $-d-1$ is a nonnegative integer. Hence, we have $d+(-d-1)=-1\in R_+(\sqrt{d}),$ as desired. If $d$ is positive, say $n<\sqrt{d}<n+1,$ then $$
\binom{\sqrt{d}}{n+2}=a+b\sqrt{d}<0,a,b\in\Q.
$$
This is negative because the numerator is $\sqrt{d}(\sqrt{d}-1)\cdots(\sqrt{d}-n-1),$ and all the factors in the product but $\sqrt{d}-n-1$ are positive. We multiply by some positive integer to clear the denominators of $a,b,$ to get $$
c+e\sqrt{d}<0\in R_+(\sqrt{d}),c,e\in\Z.
$$
Either $c>e\sqrt{d}$ or $c<e\sqrt{d}.$ Assume without loss of generality that $c>e\sqrt{d}$; the other case is analogous. Then $e$ must be negative (otherwise both $c$ and $e\sqrt{d}$ must be nonnegative, so their sum is as well), so we can add $-2e\sqrt{d}$ to get $c-e\sqrt{d}\in R_+(\sqrt{d}).$ If this element is positive, then we multiply $c+e\sqrt{d}$ by $c-e\sqrt{d}$ to get $c^2-e^2d<0\in R_+(\sqrt{d}),$ and then we repeat the argument in the previous paragraph to get that $-1\in R_+(\sqrt{d}).$ If this element is negative, then $c$ must be negative, so we can add $-2c$ to get $-c-e\sqrt{d}\in R_+(\sqrt{d}).$ Then we multiply $c-e\sqrt{d}<0$ by $-c-e\sqrt{d}>0$ to get $e^2d-c^2<0\in R_+(\sqrt{d}).$ Thus, the same argument applies, as desired.
\end{proof}
\begin{proof}[Proof of Lemma \ref{contains-inverses-of-primes}]
The connections made may seem too abstract - we verify the proof explicitly.

We first consider the case when $d$ is not divisible by $p$ and $p\neq2.$ Then $d$ is a quadratic nonresidue $\pmod p,$ so $x^2-d$ has no solutions modulo $p.$ Hence, by Theorem \ref{final-description}, we have $\a_p=(\sqrt{d})_p=\frac{1}{p}\in R_+(\sqrt{d}),$ as desired.

Now, suppose $p\mid d.$ Then, for any integers $a,b$ we have that the norm $N(a+b\sqrt{d})$ of $a+b\sqrt{d}$ is equal to $a^2-b^2d,$ which is not divisible by $p$ unless $a$ is. In particular, none of $\sqrt{d}-1,\sqrt{d}-2,\dots,\sqrt{d}-p+1$ are divisible by $p.$ Hence, we have $$
(p-1)!\binom{\sqrt{d}}{p}=\frac{\sqrt{d}(\sqrt{d}-1)\cdots(\sqrt{d}-p+1)}{p}\in R_+(\sqrt{d}),
$$
so by the norm argument, the element $\frac{k\sqrt{d}}{p}\in R_+(\sqrt{d}),$ for some integer $k$ not divisible by $p.$ But then we multiply by the multiplicative inverse of $k$ and subtract an integer multiple of $\sqrt{d}$ to obtain that $\frac{\sqrt{d}}{p}\in R_+(\sqrt{d}).$ Squaring this, we have $\frac{d}{p^2}\in R_+(\sqrt{d}),$ but $d$ is squarefree. Hence, $d$ is only divisible by a single copy of $p,$ implying that $\frac{a}{p}\in R_+(\sqrt{d})$ for $a=d/p$ not divisible by $p.$ Hence, $\frac{1}{p}\in R_+(\sqrt{d}),$ as desired.

Finally, suppose $p=2.$ We just need to consider the cases when $d\not\equiv1\pmod8,$ and the case when $d$ is divisible by $2$ has already been discussed above. First, suppose $d\equiv3\pmod4.$ Then $\binom{\sqrt{d}}{2}=\frac{d-\sqrt{d}}{2}\in R_+(\sqrt{d}).$ But we add $\sqrt{d}$ to get that $\frac{d+\sqrt{d}}{2}\in R_+(\sqrt{d})$ as well. Then we multiply these two to get that $\frac{d^2-d}{4}\in R_+(\sqrt{d}).$ Since $d\equiv3\pmod4,$ $d^2-d\equiv2\pmod4.$ Hence, we have $\frac{a}{2}\in R_+(\sqrt{d})$ for some odd number $a,$ so $\frac{1}{2}\in R_+(\sqrt{d}),$ as desired. Now, if $d\equiv5\pmod8,$ then, once again, we have that $\frac{d-\sqrt{d}}{2}\in R_+(\sqrt{d}).$ But then we have $$
\binom{(d-\sqrt{d})/2}{2}=\frac{(d-\sqrt{d})/2*(d-2-\sqrt{d})/2}{2}=\frac{d^2-d-2(d-1)\sqrt{d}}{8}\in R_+(\sqrt{d}).
$$
Note that $2(d-1)$ is divisible by $8,$ hence we can add $\frac{2(d-1)}{8}\sqrt{d}$ to get that $\frac{d^2-d}{8}\in R_+(\sqrt{d}).$ Then we repeat the argument for $d\equiv3\pmod4$ to obtain that $\frac{1}{2}\in R_+(\sqrt{d}),$ as desired.
\end{proof}
\unappendix
\bibliographystyle{alpha}

\bibliography{biblio}

\end{document}